\keywords{Ordinary Differential Equations, Universal Differential
  Equations, Analog Models of Computation, Continuous-Time Models of
  Computation, Computability, Computational Analysis, Computational Complexity
}
\newcommand{\R}{\mathbb{R}}
\newcommand{\Rp}{\mathbb{R}_{\geqslant0}}
\newcommand{\Rps}{\mathbb{R}_{>0}}
\newcommand{\N}{\mathbb{N}}
\newcommand{\Nps}{\mathbb{N}_{>0}}
\newcommand{\Z}{\mathbb{Z}}
\newcommand{\Q}{\mathbb{Q}}
\newcommand{\D}{\mathbb{D}}
\newcommand{\K}{\mathbb{K}}
\newcommand{\set}[1]{\left\{#1\right\}}
\newcommand{\myceil}[1]{\left\lceil#1\right\rceil}
\newcommand{\dom}{\operatorname{dom}}
\newcommand{\myfunctiontext}[1]{\texttt{#1}}
\newcommand{\myfunction}[1]{\mathtt{#1}}
\newcommand{\bitgen}{\myfunction{bitgen}}
\newcommand{\bitgentext}{\myfunctiontext{bitgen}}
\newcommand{\dygen}{\myfunction{dygen}}
\newcommand{\dygentext}{\myfunctiontext{dygen}}
\newcommand{\pwcgen}{\myfunction{pwcgen}}
\newcommand{\pwcgentext}{\myfunctiontext{pwcgen}}
\newcommand{\fastgen}{\myfunction{fastgen}}
\newcommand{\fastgentext}{\myfunctiontext{fastgen}}
\newcommand{\rnd}{\myfunction{round}}
\newcommand{\sgn}{\myfunction{sgn}}
\newcommand{\dysize}{\mathfrak{L}}
\newcommand{\reach}{\myfunction{reach}}
\newcommand{\pereach}{\myfunction{pereach}}
\newcommand{\pil}{\myfunction{pil}}
\newcommand\second{{''}}
\newcommand\third{{'''}}
\newcommand\fourth{{''''}}
\newcommand{\deriv}[2]{\tfrac{\partial #1}{\partial #2}}
\newcommand\numberthis{\addtocounter{equation}{1}\tag{\theequation}}
\newcommand{\inorm}[2]{\left\lVert{#1}\right\rVert_{#2}}
\newcommand{\fnpil}[2]{%
    3*(1+tanh(2*(sin(2*pi*(#2))-1./2.)*(3+(#1))))
}
\definecolor{darkgreen}{rgb}{0.1,0.7,0.1}
\begin{document}

\title{A Universal Ordinary Differential Equation}

\author{Olivier Bournez\rsuper{a}}
\address{\lsuper{a}Ecole Polytechnique, LIX, 91128 Palaiseau Cedex, France}
\email{bournez@lix.polytechnique.fr}
\thanks{Olivier Bournez was partially supported by \emph{ANR PROJECT RACAF}}

\author{Amaury Pouly\rsuper{b}}
\address{\lsuper{b}Max Planck Institute for Software Systems (MPI-SWS), Germany}
\address{Université de Paris, IRIF, CNRS}
\email{amaury.pouly@irif.fr}

\begin{abstract}

An astonishing fact was established by Lee A. Rubel (1981): there
exists a fixed non-trivial fourth-order polynomial differential algebraic
equation (DAE) such that for any positive continuous function
$\varphi(t)$ on the reals, and for any positive continuous function
$\epsilon(t)$, it has a $\mathcal{C}^\infty$ solution with
$| y(t) - \varphi(t) | < \epsilon(t)$ for all $t$.  Lee A. Rubel
provided an explicit example of such a polynomial DAE. Other
examples of universal DAE
have later been proposed by other authors.
However, Rubel's DAE \emph{never}
has a unique solution, even with a finite number of conditions of
the form $y^{(k_i)}(a_i)=b_i$. 

The question whether one can require the solution that approximates
$\varphi(t)$ to be the unique solution for a given initial data is a well 
known open problem [Rubel 1981, page~2], [Boshernitzan 1986, Conjecture 6.2].
In this article, we solve it and show that
Rubel's statement holds for polynomial ordinary differential equations (ODEs), and since
polynomial ODEs have a unique solution given an initial data, this positively
answers Rubel's open problem. More precisely, we show that there exists a \textbf{fixed}
polynomial ODE such that for any $\varphi(t)$ and $\epsilon(t)$ there exists
some initial condition that yields a solution that is $\epsilon(t)$-close to
$\varphi(t)$ at all times.
 In particular, the solution to the ODE is necessarily
analytic, and we show that the initial condition is computable from the target function and error
function.
\end{abstract}

\maketitle

\section{Introduction}

An astonishing result was established by Lee A. Rubel in 1981 
\cite{Rub81}. There exists a universal fourth-order algebraic
differential equation in the following sense.

\begin{thmC}[\cite{Rub81}]
There exists a non-trivial fourth-order implicit differential algebraic
equation 
\begin{equation} \label{rubelgeneral}
P(y^\prime,y\second,y\third,y\fourth)=0
\end{equation}
where $P$ is a polynomial in four variables with integer coefficients,
such that for any continuous function $\varphi$ on $(-\infty,\infty)$
and for any positive continuous function $\epsilon(t)$ on
$(-\infty,\infty)$, there exists a $\mathcal{C}^\infty$ solution $y$
to \eqref{rubelgeneral}
such that $$| y(t) - \varphi(t) | < \epsilon(t)$$ for all $t \in
(-\infty,\infty)$.
\end{thmC}

Even more surprising is the fact that Rubel provided an explicit example of such
a polynomial $P$ that is particularly simple:
\begin{equation} \label{eq:rubel}
\begin{array}{lll}
3 {y ^\prime }  ^4  y ^{\second}  {y\fourth}^2 
& - 4 {y^\prime} ^4  {y \third}^2 y ^{\fourth} 
+ 6 {y^\prime}^3 {y\second}^2 y\third y\fourth 
+24 {y^\prime}^2 {y\second}^4 y\fourth & \\
&
- 12 {y^\prime}^3 y\second {y\third}^3
-29 {y^\prime}^2 {y\second}^3 {y\third}^2
+12 {y\second}^7 & = 0.
\end{array}
\end{equation}

While this result looks very surprising at first sight, Rubel's proofs
turns out to use basic arguments, and can be explained as follows. It uses the
following classical trick to build $\mathcal{C}^\infty$ piecewise
functions: let $$g(t)= \left\{
\begin{array}{ll}
e^{-1/(1-t^2)} & \mbox{ for } -1<t<1 \\
0 & \mbox{ 
otherwise.}
\end{array}\right.
$$ It is not hard to see that function $g$ is $C^\infty$ and Figure~\ref{fig:bump}
shows that $g$ looks like a ``bump''.
Since it satisfies $$\tfrac{g'(t)}{g(t)} = - \frac{2t}{(1-t^2)^2},$$
then $$g'(t) (1-t^2)^2 + g(t) 2t=0$$ and $f(t)= \int_0^t g(u) du$
satisfies the polynomial differential algebraic equation
$$f'' (1-t^2)^2 + f'(t) 2t=0.$$
Since this equation is homogeneous, it also holds for $af+b$ for any $a$ and $b$.
The idea is then to obtain a fourth order DAE that is
satisfied by every function $y(t)=\gamma f(\alpha t + \beta) + \delta$,
for all $\alpha,\beta,\gamma,\delta$. After some computations, Rubel
obtained the universal differential equation \eqref{eq:rubel}.

\begin{figure} 
\begin{center}
\begin{tikzpicture}[xscale=1.2,yscale=7]
    \draw[->] (0,-0.01) -- (0,0.4);
    \draw[->] (-2.2,0) -- (2.2,0) node[right] {$t$};
    \draw[color=red,thick] plot[id=rubel_bump_left,domain=-2:-1]
        function{0};
    \draw[color=red,thick] plot[id=rubel_bump,domain=-1:1,samples=100]
        function{exp(-1/(1-x*x))};
    \draw[color=red,thick] plot[id=rubel_bump_right,domain=1:2]
        function{0};
\end{tikzpicture}
\hspace{1cm}
\begin{tikzpicture}[xscale=3,yscale=2.2]
    \draw[->] (0,0.7) -- (0,2.1);
    \draw[->] (-0.02,1) -- (1.6,1) node[right] {$t$};
    \draw[color=red,thick] plot file{rubel_s_modules.table};
\end{tikzpicture}
\caption{On left, graphical representation of function $g$. On right,
  two $S$-modules glued together.}\label{fig:bump}
\end{center}
\end{figure}
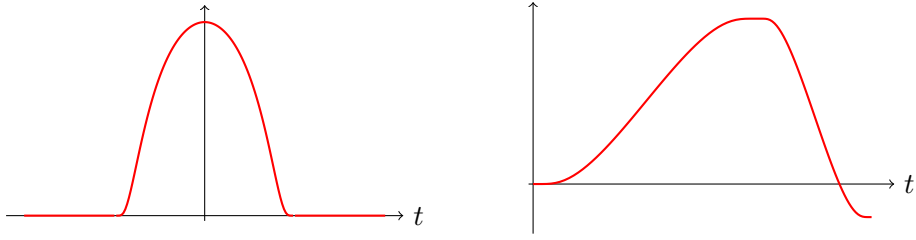

Functions of the type $y(t)=\gamma f(\alpha t + \beta) + \delta$ generate what
Rubel calls $S$-modules: a function that values $A$ at $a$, $B$
at $b$, is constant on $[a,a+\delta]$, monotone on
$[a+\delta,b-\delta]$, constant on $[b-\delta,b]$, by an appropriate
choice of $\alpha,\beta, \gamma, \delta$.
Summing $S$-modules corresponds to gluing then together, as is depicted in
Figure \ref{fig:bump}. Note that finite, as well as infinite sums\footnote{With some convergence or disjoint domain conditions.}
of $S$-modules
still satisfy the equation \eqref{eq:rubel} and thus any piecewise affine function
(and hence any continuous function) can be approximated by an appropriate sum of
$S$-modules. This concludes Rubel's proof of universality.
\bigskip

As one can see, the proof turns out to be frustrating because the equation essentially
allows any behavior. This may be interpreted as merely
stating that differential algebraic equations is simply too lose a model.
Clearly, a key point is that this differential
equation does not have a unique solution for any given initial
condition: this is the core principle used to glue a finite or infinite
number of $S$-modules and to approximate any continuous function.
Rubel was aware of this issue and left open the following question in \cite[page 2]{Rub81}.
\begin{quote}
``It is open whether we can require
in our theorem that the solution that approximates $\varphi$ to be the
unique solution for its initial data.''
\end{quote}
Similarly, the following is conjectured in \cite[Conjecture
6.2]{boshernitzan1986universal}.
\begin{quote}
``Conjecture. There exists a non-trivial differential algebraic
equation such that any real continuous function on $\R$ can be
uniformly approximated on all of $\R$ by its real-analytic solutions''
\end{quote}

The purpose of this paper is to provide a positive answer to both
questions. We prove that a fixed polynomial ordinary differential
equations (ODE) is universal in above Rubel's sense.
At a high level, our proofs are based on ordinary
differential equation programming. This programming is inspired by
constructions from our previous paper \cite{ICALP2016}. Here, we mostly use
this programming technology to achieve a very different goal and to provide
positive answers to these above open problems.

We also believe they open some lights on
computability theory for continuous-time models of computations. In
particular, it follows that concepts similar to Kolmogorov complexity
can probably be expressed naturally by measuring the complexity of
the initial data of a (universal-) polynomial ordinary differential equation for a
given function. We leave this direction for future work.

The current article is an extended version of \cite{ICALP2017}: here
all proofs are provided, and we extend the statements by proving 
that the initial condition can always be computed from the function in
the sense of Computable Analysis. 

\subsection{Related work and discussions}

First, let us mention that  Rubel's universal differential equation has been extended
in several papers. In particular, Duffin proved in \cite{Duffin81}
that implicit universal differential equations with simpler expressions exists, such as
$$n^2 y^{''''} {y'}^2 + 3 n (1-n) y^{'''} y^{''} y' + (2n^2 -3n +1)
{y^{''}}^3=0$$
for any $n >3.$
The idea of \cite{Duffin81} is basically to replace the
$\mathcal{C}^\infty$ function $g$ of \cite {Rub81} by some piecewise
polynomial of fixed degree, that is to say by splines. Duffin also
proves that considering trigonometric polynomials for function $g(x)$
leads to the universal differential equation $$n  y^{''''} {y'}^2 + (2-3n)
y^{'''} y^{''} y' + 2 (n-1) {y''}^3=0.$$
This is done at the price of approximating function
$\varphi$ respectively by splines or trigonometric splines solutions
which are $\mathcal{C}^n$ (and $n$ can be taken arbitrary big) but not
$\mathcal{C}^\infty$ as in \cite{Rub81}.
 Article
\cite{Briggs02} proposes another universal differential equation whose
construction is based on Jacobian elliptic functions. Notice that
\cite{Briggs02} is also correcting some statements of
\cite{Duffin81}.

All the results mentioned so far are concerned with approximations of continuous
functions over the whole real line. Approximating functions over a compact domain seems
to be a different (and somewhat easier for our concerns) problem, since basically
by compactness, one just needs to approximate the function locally on a finite number
of intervals. A 1986
reference survey discussing both approximation over the real line and
over compacts is \cite{boshernitzan1986universal}.
Recently, over compact domains, the existence of universal ordinary
differential equation $\mathcal{C}^\infty$ of order $3$ has been
established in \cite{couturier2016construction}: it is
shown that for any $a<b$, there exists a third order
$\mathcal{C}^\infty$ differential equation $y'''=F(y,y',y'')$ whose
solutions are dense in $\mathcal{C}^0([a,b])$.  Notice that this is
not obtained by explicitly stating such an order $3$ universal
ordinary differential, and that this is a weaker notion of
universality as solutions are only assumed to be arbitrary close over a
compact domain and not all the real line.  Order $3$ is argued to be a
lower bound for Lipschitz universal ODEs
\cite{couturier2016construction}.

Rubel's result has sometimes been considered to be the
equivalent, for analog computers, of the universal Turing machines. This
includes Rubel's paper motivation given in \cite[page 1]{Rub81}. We
now discuss and challenge this statement.

Indeed, differential algebraic equations are known to be related to the
General Purpose Analog Computer (GPAC) of Claude Shannon \cite{Sha41},
proposed as a model of the Differential Analysers \cite{Bus31}, a
mechanical programmable machine, on which he worked as an
operator. Notice that the original relations stated by Shannon in
\cite{Sha41} between differential algebraic equations and GPACs have
some flaws, that have been corrected later by \cite{Pou74} and
\cite{GC03}. Using the better defined model of GPAC of \cite{GC03}, it
can be shown that functions generated by GPAC exactly correspond
to polynomial ordinary differential equations. Some recent results
have established that this model, and hence polynomial ordinary
differential equations can be related to classical computability
\cite{JOC2007} and complexity theory \cite{ICALP2016}.

However, we do not really agree with the statement that Rubel's result is
the equivalent, for analog computers, of the universal Turing
machines. In particular, Rubel's notion of universality is completely different
from those in computability theory.
For a given initial data, a (deterministic) Turing machine has only
one possible evolution. On the other hand, Rubel's equation does not dictate any
evolution but rather some conditions that any evolution has to satisfy. In other
words, Rubel's equation can be interpreted as the equivalent of an invariant of
the dynamics of (Turing) machines, rather than a universal machine in the sense
of classical computability.

Notice that while several results have established that (polynomial)
ODEs are able to simulate the evolution of
Turing machines (see e.g. \cite{JOC2007, gcb08, ICALP2016}), the
existence of a universal ordinary differential equation does not
follow from them. To understand the difference, let us restate the main result of \cite{gcb08},
of which \cite{ICALP2016} is a more advanced version for polynomial-time computable
functions.

\begin{thm}\label{th:gcb08}
A function $f:[a,b]\to\R$ is computable (in the framework of Computable Analysis)
if and only if there exists some polynomials $p:\R^{n}\to\R^n$, $p_0:\R\to\R$
with computable coefficients and $\alpha_1,\ldots,\alpha_{n-1}$ computable reals such that
for all $x\in[a,b]$, the solution $y:\Rp\to\R^n$ to the Cauchy problem
\[y(0)=(\alpha_1,\ldots,\alpha_{n-1},p_0(x)),\qquad y'=p(y)\]
satisfies that for all $t\geqslant 0$ that
\[|f(x)-y_1(t)|\leqslant y_2(t)\quad\text{and}\quad \lim_{t\rightarrow\infty}y_2(t)=0.\]
\end{thm}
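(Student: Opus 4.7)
The plan is to prove the two directions of this equivalence separately. The ``if'' direction amounts to showing that the data in the statement makes $f$ computable in the usual sense, which relies on effective integration of polynomial ODEs. The ``only if'' direction requires simulating a Turing machine computing $f$ by a polynomial ODE in a uniform way, the input $x$ entering only through $p_0(x)$.

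For the easy direction ($\Leftarrow$), given $x\in[a,b]$ and a precision $2^{-k}$, I would first compute the initial condition $y(0)$ to arbitrary precision from the computable data $\alpha_1,\ldots,\alpha_{n-1}$ and $p_0(x)$. Next, I would rigorously integrate $y'=p(y)$ from this initial condition. Because $p$ is polynomial, we have explicit a priori bounds on all derivatives of $y$ on any bounded time interval, so Taylor expansion with interval arithmetic (or any rigorous polynomial-ODE solver) yields an algorithm that, on input a computable time $T$ and precision $2^{-m}$, returns $y(T)$ within $2^{-m}$. Then I would search for a $T$ large enough that our rigorous upper bound on $y_2(T)$ falls below $2^{-k-1}$ and output an approximation of $y_1(T)$ within $2^{-k-1}$, giving $|f(x)-y_1(T)|\leqslant 2^{-k}$ by the triangle inequality. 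The search for $T$ terminates because $y_2(t)\to 0$.

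For the hard direction ($\Rightarrow$), starting from a Turing machine $M$ that, given an oracle for $x$ and a precision parameter $n$, outputs a dyadic approximation of $f(x)$ within $2^{-n}$, I would build a polynomial ODE that (i) encodes $M$'s tape, head position, and internal state as real numbers, typically via a radix encoding so that tape shifts become multiplications, (ii) simulates $M$'s step relation by smooth polynomial dynamics together with a ``phase clock'' that alternates computation and stabilization phases, and (iii) wraps the TM simulation in an outer loop that runs $M$ successively with precision parameters $n=1,2,\ldots$, each time refreshing $y_1$ with the latest approximation of $f(x)$ and shrinking $y_2$ to match. The input $x$ would enter only through $p_0(x)$, from which the dynamics would extract arbitrarily accurate dyadic approximations on demand by an iterative ``peel off the next bit'' subcomputation; the constants $\alpha_i$ would encode the description of $M$ and initial bookkeeping variables.

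The main obstacle is step (ii): robustly simulating a discrete machine by a continuous polynomial flow. Without care, numerical drift during the continuous simulation would eventually corrupt the TM state encoding. The standard remedy is to design the flow so that the intended discrete states are \emph{attractors}: after each simulated step, the dynamics actively contract the continuous state back toward the nearest intended configuration, buying a positive margin against errors. Coupling this attractor construction with the phase clock, the outer refinement loop, and the on-demand oracle for $x$ extracted from $p_0(x)$, while keeping every vector field polynomial, is the technically intricate core of the argument and is where most of the work would lie.
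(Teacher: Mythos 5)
This theorem is not proved in the paper at all: it is explicitly stated as a restatement of the main result of the cited reference \cite{gcb08}, introduced with the sentence ``let us restate the main result of \cite{gcb08}.'' There is therefore no proof in the paper to compare your attempt against; the authors merely quote the theorem to contrast its notion of ``universal'' polynomial ODE (asymptotic convergence to $f(x)$ for each $x$) with their own Theorem~\ref{th:universal_pivp} (pointwise $\varepsilon$-closeness for all $t$).

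That said, your sketch is a reasonable description of the strategy used in the literature. The $\Leftarrow$ direction is correct in outline: polynomial ODEs admit rigorous enclosures on bounded time intervals, and the monotone search for a time $T$ with a certified small upper bound on $y_2(T)$ halts because $y_2(t)\to 0$. One small caveat: you need to also certify that the solution does not blow up before $T$, but for the forward direction you may simply assume the solution exists on $[0,\infty)$ as the theorem stipulates; rigorous ODE solvers will detect blow-up in finite time if you wanted to be fully careful. The $\Rightarrow$ direction you describe --- encoding TM configurations in reals, stepping the machine with a two-phase clock, making target configurations attracting to beat numerical drift, an outer loop that reruns $M$ with increasing precision, and extracting bits of $x$ from a single real $p_0(x)$ --- is exactly the Gra\c{c}a--Campagnolo--Buescu style of construction, and the ``attractor'' remedy you name is the key idea that makes the simulation robust. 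The genuinely delicate part (which you flag but do not resolve) is ensuring the invariant $|f(x)-y_1(t)|\leqslant y_2(t)$ holds \emph{for all} $t\geqslant 0$, including during the transient refresh phases; the cited proof manages $y_2$ so that it only decreases after a new, better estimate has been safely installed in $y_1$. Since the paper does not supply a proof of this auxiliary theorem, a full comparison is not possible, but your plan is consistent with the known proof.
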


Since there exists a universal Turing machine, there exists a ``universal'' polynomial ODE
for computable functions. But there are major differences between Theorem~\ref{th:gcb08}
and the result of this paper (Theorem~\ref{th:universal_pivp}). Even if we have a strong
link between the Turing machines's configuration and the evolution of the differential
equation, this is not enough to guarantee what the trajectory of the system
will be at all times. Indeed, Theorem~\ref{th:gcb08} only guarantees that $y_1(t)\rightarrow f(x)$
asymptotically. On the other hand, Theorem~\ref{th:universal_pivp} guarantees the value
of $y_1(t)$ at all times.
Notice that our universality result also applies to functions
that are not computable (in which case the initial condition is computable from
the function but still not computable).
\bigskip

We would like to mention some implications for experimental sciences that are related to the
classical use of ODEs  in such contexts. Of
course, we know that this part is less formal from a mathematical
point of view, but we believe this discussion has some importance:
A key property in experimental sciences, in particular physics is
analyticity. Recall that a function is analytic if it is equal to its
Taylor expansion in any point. It has sometimes been observed that
``natural'' functions coming from Nature are analytic, even if this
cannot be a formal statement, but more an observation (see
e.g. \cite{CIEChapter2007,Moo90,KM99}). 
We obtain a fixed universal polynomial ODE, so in particular all its solution
must be analytic\footnote{Which is not the case for polynomial DAEs.}, and it
follows that universality holds even
with analytic functions. All previous constructions mostly worked
by gluing together $\mathcal{C}^\infty$ or $\mathcal{C}^n$ functions,
and as it is well known ``gluing'' of analytic functions is
impossible. We believe this is an important difference with previous works.

As we said, Rubel's proof can be seen as an indication that (fourth-order)
polynomial implicit DAE is too loose model compared to classical ODEs, allowing in particular
to glue solutions together to get new solutions. As observed in many articles citing Rubel's
paper, this class appears so general that from an experimental point of view, it makes
littles sense to try to fit a differential model because a single equation can
model everything with arbitrary precision. Our result implies the same for polynomial ODEs
since, for the same reason, a single equation of sufficient dimension can model
everything.

Notice that our constructions have at the end some similarities with
Voronin's theorem.  This theorem states that Riemann's $\zeta$
function is such that for any analytic function $f(z)$ that is
non-vanishing on a domain $U$ homeomorphic to a closed disk, and any
$\epsilon>0$, one can find some real value $t$ such that for all
$z \in U$, $|\zeta(z+it)-f(z)| <\epsilon$.  Notice that $\zeta$
function is a well-known function known not to be solution of any
polynomial DAE (and consequently polynomial ODE), and hence
there is no clear connection to our constructions based on ODEs. We
invite to read the post \cite{blogvoronin} in ``G\"odel's Lost
Letter and P=NP'' blog for discussions about potential implications of this
surprising result to computability theory.

\subsection{Formal statements}

Our results are the following:

\begin{thm}[Universal PIVP]\label{th:universal_pivp}
There exists a \textbf{fixed} polynomial vector $p$ in $d$ variables with rational coefficients such that for any functions
$f\in C^0(\R)$ and $\varepsilon\in C^0(\R,\Rps)$, there exists $\alpha\in\R^d$
such that there exists a unique solution $y:\R\rightarrow\R^d$ to
$y(0)=\alpha$, $y'=p(y).$
Furthermore, this solution satisfies that
$|y_1(t)-f(t)|\leqslant\varepsilon(t)$
for all $t\in\R$, and it is analytic.

Furthermore, $\alpha$ can be computed from $f$ and $\varepsilon$ in
the sense of Computable Analysis, more precisely
$(f,\varepsilon)\mapsto\alpha$ is $([\rho\to\rho]^2,\rho^d)$-computable (refer to Section \ref{sec:ccastuff}
for formal definitions). 
\end{thm}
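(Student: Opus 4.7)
The plan is to leverage the polynomial ODE programming toolkit developed in \cite{ICALP2016} to design a fixed polynomial vector field that deterministically decodes any continuous target from information packed into a single initial condition $\alpha\in\R^d$. Uniqueness, local existence and analyticity of the solution will be automatic from the Cauchy--Lipschitz theorem applied to the polynomial (hence real-analytic) right-hand side; global existence will follow by ensuring that every auxiliary coordinate stays uniformly bounded on $\R$ by construction.

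First I would reduce the universal approximation task to tracking a piecewise-affine target. Given $f\in C^0(\R)$ and $\varepsilon\in C^0(\R,\Rps)$, I choose a sequence of dyadic sample points $t_n$ with spacing adapted to the local modulus of continuity of $f$ and to the infimum of $\varepsilon$ on each compact interval, together with dyadic values $a_n$ close to $f(t_n)$, so that the piecewise-linear interpolant $\tilde f$ of $(t_n,a_n)$ satisfies $|\tilde f(t)-f(t)|\leqslant\tfrac{1}{2}\varepsilon(t)$ everywhere. This reduction only uses uniform continuity on compacts and is effective: for a computable pair $(f,\varepsilon)$ the sequence $(t_n,a_n)$ is a computable sequence of dyadic rationals.

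Second I would encode the countable sequence $(t_n,a_n)$ into a finite vector $\alpha\in\R^d$ through a standard dyadic pairing scheme and assemble the fixed ODE from two cooperating subsystems. The first is a signal generator, along the lines of the \texttt{pwcgen}/\texttt{fastgen} primitives suggested by the paper's notation, which, starting from the digits of $\alpha$, produces on demand around time $t_n$ the pair $(a_n,a_{n+1})$ needed to interpolate on $[t_n,t_{n+1}]$; this is the place where I invoke \cite{ICALP2016} essentially as a black box. The second is a polynomial interpolator driven by a polynomial ``clock'': given two successive samples, it smoothly drives the distinguished coordinate $y_1$ along a curve approximating the linear interpolant to within $\tfrac{1}{2}\varepsilon$, with carefully tuned transition windows so that the hand-off between consecutive intervals does not break the bound. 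Combining the two estimates via the triangle inequality then yields $|y_1(t)-f(t)|\leqslant\varepsilon(t)$ for every $t\in\R$, and computability of $\alpha$ reduces to observing that sampling, dyadic truncation of $f(t_n)$ and the encoding into $\alpha$ are all effective operations on the computable-analysis representations of $f$ and $\varepsilon$.

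The main obstacle will be this last construction: producing a \emph{single} polynomial vector field that simultaneously realises signal generation, interpolation and smooth hand-off, keeps every auxiliary coordinate uniformly bounded (so the solution extends to all of $\R$), and respects the \emph{pointwise} error bound $\varepsilon(t)$ rather than merely an $L^1$, asymptotic or small-on-average bound. The two-sided time axis adds a minor extra complication, handled by packing samples for $t<0$ into $\alpha$ alongside those for $t>0$, since polynomial ODEs run symmetrically under time reversal. Once the generator and interpolator are built and glued together, the uniqueness, analyticity and computability claims of the theorem drop out essentially for free.
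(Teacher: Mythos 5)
Your high-level plan is close in spirit to the paper's: reduce to approximating slowly-varying almost-piecewise-constant targets, encode the samples into a single real initial value, and let a fixed generable/PIVP vector field decode them (this is exactly what the paper's $\pwcgentext$, $\dygentext$, $\bitgentext$ do). But there is a conceptual error at the centre of your proposal that would make it fail.

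You write that global existence follows ``by ensuring that every auxiliary coordinate stays uniformly bounded on $\R$ by construction.'' This cannot work, and the paper spends an entire section (Section~\ref{sec:fast}) confronting the opposite difficulty. The coordinate $y_1$ itself must track $f$, which is allowed to be an arbitrary continuous function, hence unbounded and with arbitrary growth. Moreover the internal machinery cannot be bounded either: the error tolerance $\varepsilon(t)$ may decay super-fast, so the internal ``precision'' and ``clock'' parameters (the arguments $\mu$, $\phi$, etc.\ in the paper's $\rnd$ and $\pereach$) must themselves grow arbitrarily fast, and the decoded dyadics appear at rapidly-spaced times $a_n$ that must be re-synchronised using a fast clock. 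This is precisely where the paper needs the fast-growing function $\fastgentext$ (Theorem~\ref{th:fastgen}), built from Bank's refinement of the Besicovitch--Vijayaraghavan counterexample to Borel's growth conjecture: a \emph{fixed} generable function whose growth can be made arbitrarily large by tuning only the initial condition. Without something like $\fastgentext$ you cannot even state why a fixed polynomial $p$ admits global solutions that grow as fast as you want; you treat $\fastgentext$ as an off-the-shelf primitive ``along the lines of'' \cite{ICALP2016}, but it is a new construction and the crux of the argument. The paper then uses it three separate times (time reparametrization in Lemma~\ref{lem:univ_class_2}, amplitude rescaling in Lemma~\ref{lem:univ_class_3}, and gating in Lemma~\ref{eq:univ_class_4_extend}) to climb from bounded slowly-varying targets to arbitrary $(f,\varepsilon)$ on $\R$.

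A second, more minor gap: your handling of the two-sided time axis (``pack samples for $t<0$ into $\alpha$, polynomial ODEs run symmetrically under time reversal'') glosses over the fact that the decoding process (sampling dyadics, re-timing them) inherently runs forward in time from $t=0$, so you cannot just reverse the clock. The paper handles this via a gating construction (Lemma~\ref{eq:univ_class_4_extend}) that makes the forward-time approximator vanish on $(-\infty,-\delta]$, then superposes two such approximators run in opposite directions (Lemma~\ref{lem:univ_class_5}); this requires controlling the product term near $t=0$ and again invokes $\fastgentext$. The remaining pieces of your outline (piecewise reduction, encoding, uniqueness and analyticity from the polynomial right-hand side, effectiveness of the map $(f,\varepsilon)\mapsto\alpha$) are consistent with what the paper does, but as stated the proposal is missing the central idea that makes the construction possible at all.
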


It is well-known that polynomial ODEs
can be transformed into DAEs that have the same analytic solutions, see \cite{CPSW05}
for example. The following then follows for DAEs.

\begin{thm}[Universal DAE]\label{th:universal_dae}
There exists a \textbf{fixed} polynomial $p$ in $d+1$ variables with rational coefficients such that for any functions
$f\in C^0(\R)$ and $\varepsilon\in C^0(\R,\Rps)$, there exists $\alpha_0,\ldots,\alpha_{d-1}\in\R$
such that there exists a unique \textbf{analytic} solution $y:\R\rightarrow\R$ to
$y(0)=\alpha_0,y'(0)=\alpha_1,\ldots,y^{(d-1)}(0)=\alpha_{d-1}$, $ p(y,y',\ldots,y^d)=0.$
Furthermore, this solution satisfies that
$|y(t)-f(t)|\leqslant\varepsilon(t)$
for all $t\in\R$.

Furthermore, $\alpha$ can be computed from $f$ and $\varepsilon$ in
the sense of Computable Analysis, more precisely
$(f,\varepsilon)\mapsto\alpha$ is $([\rho\to\rho]^2,\rho^d)$-computable (refer to Section \ref{sec:ccastuff}
for formal definitions). 
\end{thm}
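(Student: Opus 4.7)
The plan is to reduce Theorem~\ref{th:universal_dae} to Theorem~\ref{th:universal_pivp} via the classical ODE-to-DAE transformation of \cite{CPSW05}. Fix the universal polynomial vector field $p:\R^d\to\R^d$ given by Theorem~\ref{th:universal_pivp}, and, for any target pair $(f,\varepsilon)$, let $y:\R\to\R^d$ be the resulting analytic solution with $y(0)=\alpha$.

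To construct the DAE, define polynomials $Q_0,\ldots,Q_d\in\R[y_1,\ldots,y_d]$ recursively by $Q_0=y_1$ and $Q_{k+1}=\sum_{i=1}^{d}(\partial Q_k/\partial y_i)\,p_i$, so that every solution of the ODE satisfies $y_1^{(k)}(t)=Q_k(y(t))$. Since $Q_0,\ldots,Q_d$ are $d+1$ polynomials in only $d$ variables, they are algebraically dependent over $\R$, hence there exists a non-trivial polynomial $P\in\R[z_0,\ldots,z_d]$ with $P(Q_0,\ldots,Q_d)\equiv 0$; this $P$ is the desired universal DAE, independent of the target.

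The initial condition for the DAE is then $\alpha_k:=Q_k(\alpha)$ for $k=0,\ldots,d-1$, which is a polynomial, hence computable, function of $\alpha$, and therefore computable from $(f,\varepsilon)$ by Theorem~\ref{th:universal_pivp}. By construction, $y_1$ is an analytic solution of $P(y,y',\ldots,y^{(d)})=0$ matching these values at $0$ and satisfying $|y_1(t)-f(t)|\leqslant\varepsilon(t)$ on all of $\R$.

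The main obstacle is uniqueness of the \emph{analytic} solution. An analytic $y:\R\to\R$ is determined by its Taylor coefficients at $0$; the first $d$ are prescribed, so one must argue that the DAE determines every higher derivative. I would arrange this by requiring that the map $\alpha\mapsto(Q_0(\alpha),\ldots,Q_{d-1}(\alpha))$ be injective on the relevant set of initial conditions, and by choosing $P$ of minimal $z_d$-degree, so that $\partial P/\partial z_d$ does not vanish along the trajectory. Injectivity can be ensured by augmenting the universal ODE of Theorem~\ref{th:universal_pivp} with a few auxiliary coordinates that fold the entire state into $y_1$ and its first $d-1$ derivatives; the enlarged system still falls under Theorem~\ref{th:universal_pivp} (with a larger $d$) and leaves the approximation property untouched. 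Under these conditions, $y_1(t),\ldots,y_1^{(d-1)}(t)$ uniquely recover $y(t)$, hence $y_1^{(d)}(t)$, and by successive differentiation of the DAE every $y_1^{(k)}(t)$, giving uniqueness among analytic solutions and concluding the proof.
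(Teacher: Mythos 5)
Your overall route matches the paper's: the paper does not prove Theorem~\ref{th:universal_dae} directly but derives it from Theorem~\ref{th:universal_pivp} together with the standard ODE-to-DAE transformation, for which it cites \cite{CPSW05}. Your construction of the $Q_k$ (so that $y_1^{(k)}=Q_k(y)$ along any solution of $y'=p(y)$) and the algebraic-dependence argument producing a fixed nontrivial $P$ with $P(Q_0,\dots,Q_d)\equiv 0$ are correct, and the computability of $\alpha_k=Q_k(\alpha)$ is immediate.

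The gap is exactly where you flag it, and your proposed fix does not close it. Injectivity of $\alpha\mapsto(Q_0(\alpha),\dots,Q_{d-1}(\alpha))$ shows that the intended ODE trajectory is uniquely recoverable from the DAE initial data; it does not show that this trajectory gives the \emph{only} analytic solution of the DAE with that initial data. Concretely, $P(\alpha_0,\dots,\alpha_{d-1},z_d)=0$ is a polynomial equation in $z_d$, which in general has several roots, so $y^{(d)}(0)$ (and hence the Taylor expansion at $0$) need not be determined by the $d$ prescribed values, regardless of whether $\partial P/\partial z_d$ is nonzero at the intended root or whether $P$ has minimal $z_d$-degree. The vague ``augment with auxiliary coordinates that fold the entire state into $y_1$ and its first $d-1$ derivatives'' is also not obviously compatible with keeping $p$ fixed and universal. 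What the paper actually relies on from \cite{CPSW05} is the stronger statement that one can choose the DAE so that its analytic solutions (with matching initial jets) are \emph{exactly} the first components of the ODE's solutions, not merely that they contain them; you would need to import that statement, or instead engineer $P$ so that $z_d$ occurs linearly with a coefficient that is nonvanishing along every relevant trajectory, to legitimately deduce uniqueness.
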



\begin{rem}
  Notice that both theorems apply even when $f$ is not computable. In this case,
  the initial condition(s) $\alpha$ exist but are not computable.
  We will prove that $\alpha$ is always \emph{computable from} $f$ and $\varepsilon$,
  that is the mapping $(f,\varepsilon)\mapsto\alpha$ is computable in the framework
  of Computable Analysis, with an adequate representation of $f,\varepsilon$ and $\alpha$.
\end{rem}

\begin{rem}
  Notice that we do not provide explicitly in this paper the
  considered polynomial ODE, nor its dimension $d$. But it can be
  derived by following the constructions. We currently estimate $d$ to be
  more than three hundred following the precise constructions of this
  paper (but also to be very far from the optimal). We did not try to minimize $d$ in the current
  paper, as we think our results are sufficiently hard to be followed
  in this paper for not being complicated by considerations about
  optimization of dimensions.
\end{rem}

\begin{rem}\label{rem:domain_of_def}
    Both theorems are stated for \emph{total functions} $f$ and $\varepsilon$ over $\R$. It
    trivially applies to any continuous partial function that can be extended to
    a continuous function over $\R$. In particular, it applies to any functions
    over $[a,b]$. It is not hard to see that it also applies to functions over $(a,b)$
    by rescaling $\R$ into $(a,b)$ using the cotangent:
    \[z(t)=y\left(-\cot\left(\tfrac{t-a}{b-a}\pi\right)\right)
    \quad\text{satisfies}\quad z'(t)=\phi'(t)p(z(t)),\quad\phi'(t)=\tfrac{\pi}{b-a}(1+\phi(t)^2).\]
    More complex domains such as $[a,b)$ and $(a,b]$ (with $a$ possibly infinite) can also be obtained using a similar method.
  \end{rem}

\begin{rem}
    Since the solution of a polynomial (or analytic) differential equation is analytic, our
    results can be compared with the problem of
    building uniform approximations of continuous function
    on the real line by analytic ones, and hence can be seen as a
    strengthening of such results  (see e.g. \cite{kaplan1955}). 
\end{rem}

\begin{rem}\label{rem:domain_of_def_sol}
    Let $Y(\alpha)$ be the solution given by Theorem~\ref{th:universal_pivp} satisfying $Y(\alpha)(0)=\alpha$.
    Note that the theorem does \emph{not} specify the existence of $Y(\alpha)(t)$ for all $t$ and $\alpha$. In fact, because
    of function $\fastgen$ in what follows, $Y(\alpha)$ will explode in finite time for all $\alpha$ that have certain coordinates rational,
    and the length of the interval of life depends on $\alpha$. Therefore, given $\alpha\in\R^d$, any ball around $\alpha$
    contains a $\beta$ such that $Y(\beta)$ explodes in finite time
    for the function $Y$ corresponding to our constructions.
  \end{rem}
  
\begin{rem}
    It may look at first like that Theorem~\ref{th:universal_pivp} violates  Brouwer's Invariance of domain
    but this is not the case. 
    Indeed, continuing with the notation of above remark, $Y$ is continuous\footnote{This is the local continuity
    of the solution to a smooth differential equation with respect to the initial condition.} and $Y$ is injective\footnote{This
    is the fact that for an autonomous ODE, two trajectories are either disjoint or the same.}
    with image in $S:=\bigcup_{a<b}C^0((a,b),\R^d)$ (see Remark~\ref{rem:domain_of_def_sol} about domains).
    Clearly $S$ is of much higher dimension than $\alpha\in\R^d$ but $Y$ is not dense in $S$ so there is no contradiction.
    On the other hand, if we only consider the first coordinate $Y_1$, then $Y_1$ is dense in $C^0(\R,\R)$
    but is not injective.
\end{rem}

\subsection{Overview of the proof}

A first a priori difficulty is that if one considers a fixed
polynomial ODE $y'=p(y)$, one could think that the growth
of its solutions is constrained by $p$ and thus cannot be arbitrary. This would
then prevent us from building a universal ODE simply because it could not grow
fast enough.
This fact is related to Emil Borel's conjecture in
\cite{BorelConjecture} (see also \cite{hardy1912some}) that a solution, \emph{defined over $\R$},
to a system with $n$ variables has growth bounded by roughly $e_n(x)$, the $n-$th iterate of $\exp$.
The conjecture is proved for $n=1$
\cite{BorelConjecture}, but has been proven to be false for $n=2$ in \cite{vijayaraghavan1932croissance} and \cite{BBV37}.
Bank \cite{Bank75} then adapted the previous counter-examples to provide a
DAE whose non-unique increasing real-analytic solutions at infinity do not have
any majorant. See the discussions (and Conjecture 6.1) in
\cite{boshernitzan1986universal} for discussions about the growth of
solutions of DAEs, and their relations to functions $e_n(x)$.

Thus, the first important part of this paper is to refine Bank's counter-example
to build $\fastgentext$, a fast-growing function that satisfies even stronger properties.
The second major ingredient is to be able to approximate a function with arbitrary precision everywhere.
Since this is a difficult task, we use $\fastgentext$ to our advantage to show that
it is enough to approximate functions that are bounded and change slowly (think 1-Lipschitz, although the exact condition
is more involved). That is to say, to deal with the case where there
is no problem about the growth and rate of change of functions in some way.
This is the purpose of the function $\pwcgentext$ which can build arbitrary
almost piecewise constant functions as long as they are bounded and change slowly.

It should be noted that in the entire paper, we construct \emph{generable
functions} (in several variables) (see Section~\ref{sec:generable}).
For most of the constructions, we only use basic facts
like the fact that generable functions are stable under arithmetic, composition
and ODE solving. We know that generable functions satisfy polynomial
partial equations and use this fact only at the very end to show that the generable
approximation that we have built, in fact, translates to a polynomial
ordinary differential
equation.

The rest of the paper is organised as follows. In Section
\ref{sec:stuff}, we recall some concepts and results from other
articles. The main purpose of this section is to present Theorem
\ref{th:pereach}. This theorem is the analog equivalent of doing an assignment
in a periodic manner. Section \ref{sec:fast} is
devoted to $\fastgentext$, the fast-growing function. In Section
\ref{sec:dyadic}, we show how to generate a sequence of dyadic
rationals. In Section \ref{sec:bits}, we show how to generate a
sequence of bits. In Section \ref{sec:almost}, we show how to leverage the two
previous sections to generate arbitrary almost piecewise constant functions. Section \ref{sec:mainth}
is then devoted to the proof of our main theorem.

\section{Concepts and results from previous work} 
\label{sec:stuff}

\subsection{Generable functions}\label{sec:generable}

The following concept can be attributed to \cite{Sha41}: a function $f:\R\to\R$ is said to be a
PIVP (Polynomial Initial Value Problem) function if there
exists a system of the form $y'=p(y)$, where $p$ is a (vector of) polynomial, with $f(t)=y_1(t)$ for all $t$, where $y_1$
denotes first component of the vector $y$ defined in $\R^d$. 
We need in our proof to extend this concept to talk about
multivariate functions. In \cite{BournezGP16a}, we introduced the following class, which can
be seen as extensions of \cite{GBC09}. Let $\K$ be the smallest generable field
(see \cite{BournezGP16a} for formal definitions and properties), the reader only needs to know that $\Q\subseteq\K\subseteq\R_P$ where
$\R_P$ is the set of polynomial-time computable reals, and $\K$ is closed under
images of generable functions.

\begin{defi}[Generable function]\label{def:gpac_generable_ext}
Let $d,e\in\N$, $I$ be an open and connected subset of $\R^d$
and $f:I\rightarrow\R^e$. We say that $f$ is generable if and only if
there exists an integer
$n\geqslant e$, a $n\times d$ matrix $p$ consisting of polynomials with coefficients in $\K$
, $x_0\in\K^d$, $y_0\in\K^n$
and $y:I\rightarrow\R^n$ satisfying for all $x\in I$:
\begin{itemize}
\item $y(x_0)=y_0$ and $J_y(x)=p(y(x))$ 
  \hfill$\blacktriangleright$ $y$ satisfies a polynomial differential
  equation\footnote{$J_y$ denotes the Jacobian matrix of $y$.},
\item $f(x)=(y_1(x), \ldots, y_e(x))$\hfill$\blacktriangleright$ the components of $f$ are components of $y$.
\end{itemize}
\end{defi}

This class strictly generalizes functions generated by polynomial ODEs. Indeed,
in the special case of $d=1$ (the domain of the function has dimension $1$),
the above definition is equivalent to saying that $y'=p(y)$ for some polynomial $p$.
The interested reader can read more about this in \cite{BournezGP16a}.

For the purpose of this paper, we will need to consider a slight generalisation of
this notion where the initial condition is considered to be (depending
of) a parameter, therefore
defining not just a single function but a family of function, and most importantly,
\emph{all sharing the same differential equation}. Formally:

\begin{defi}[Uniformly-generable function]\label{def:gpac_unif_generable_ext}
    Let $d,m,e\in\N$, $I$ be an open and connected subset of $\R^d$, $\Gamma\subseteq\R^m$,
    and $f:\Gamma\times I\rightarrow\R^e$. We say that $f$ is uniformly-generable if and only if
    there exists an integer $n\geqslant e$, a $n\times d$ matrix $p$ consisting of polynomials
    with coefficients in $\K$, $x_0\in\K^d\cap I$ and a $(\rho^m,\rho^n)-$computable
    function $y_0:\Gamma\to\R^n$ such that for all $\gamma\in\Gamma$, there
    exists $y:I\rightarrow\R^n$ satisfying for all $x\in I$:
    \begin{itemize}
    \item $y(x_0)=y_0(\gamma)$ and $J_y(x)=p(y(x))$ 
    \hfill$\blacktriangleright$ $y$ satisfies a polynomial differential equation
    \item $f(\gamma;x)=(y_1(x), \ldots, y_e(x))$\hfill$\blacktriangleright$ the components of $f$ are components of $y$.
    \end{itemize}
\end{defi}

For readability, we will distinguish parameters from variables using a semicolon,
for example $f(\gamma;x)$ is parameterized by $\gamma$. This should make it clear
from the context what is considered as parameter and what is
considered as a variable.

\begin{rem}\label{rem:unif_gen_coeff}
    Although we have chosen $x_0$ and the coefficients of $p$ to be in $\K$ in the
    definition above, it is clear that we can change this set \emph{at the cost of
    increasing the set $\Gamma$ of parameters}. For example we could take all coefficients
    to be rational or in $\set{0,1}$ by adding one extra parameter per coefficient
    and hence ``hiding'' them in $y_0$. The only real constraint is that since $y_0$ must
    remain computable, we still need all elements of $\K$ to be computable.
\end{rem}

For the purpose of this paper, the reader only needs to know that the class of
generable functions enjoys many stability properties that make it easy to create new
functions from basic operations. Informally, one can add, subtract, multiply, divide
and compose them at will, the only requirement is that the domain of definition
must always be connected. In particular, the class of generable functions contains
some common mathematical functions:
\begin{itemize}
\item (multivariate) polynomials;
\item trigonometric functions: $\sin$, $\cos$, $\tan$, etc;
\item exponential and logarithm: $\exp$, $\ln$;
\item hyperbolic trigonometric functions: $\sinh$, $\cosh$, $\tanh$.
\end{itemize}
Two famous examples of functions that are \emph{not} in this class are the $\zeta$
and $\Gamma$, we refer the reader to \cite{BournezGP16a} and \cite{GBC09}
for more information.

A nontrivial fact is that generable functions are
always analytic. This property is well-known in the one-dimensional case but is
less obvious in higher dimensions, see \cite{BournezGP16a} for more details.
Moreover, generable functions satisfy the following crucial properties.

\begin{lem}[Closure properties of generable functions \cite{BournezGP16a}] \label{lemma:closure} 
Let $f:\subseteq\R^d\rightarrow\R^n$ and
$g:\subseteq\R^e\rightarrow\R^m$ be generable functions. Then $f+g$, $f-g$, $fg$, $\tfrac{f}{g}$ and
$f\circ g$ are generable\footnote{With the obvious dimensional condition associated with
each operation.}.
\end{lem}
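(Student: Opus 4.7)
The plan is to treat each operation by constructing, from the given PIVP systems for $f$ and $g$, an augmented polynomial initial value problem whose solution vector contains the desired new function among its components. Concretely, let $y:I\to\R^n$ with $J_y=p(y)$, $y(x_0)=y_0$ witness that $f$ is generable (so $f_i=y_i$ for $i\leqslant n$), and let $z:I'\to\R^{n'}$ with $J_z=q(z)$, $z(x_0')=z_0'$ witness that $g$ is generable. In each case I will write down the new state vector, check that its Jacobian is a polynomial in the state itself, pick an initial point, and verify that the domain is open and connected.

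For the arithmetic operations (with $f$ and $g$ sharing domain, or restricted to an intersection of domains), I would augment the concatenated system $(y,z)$ with auxiliary components. For $f+g$ add $w_i = y_i + z_i$, whose partials are $\deriv{w_i}{x_j} = \deriv{y_i}{x_j} + \deriv{z_i}{x_j}$, already polynomial in $(y,z)$; for $fg$ add $w_i = y_i z_i$ with $\deriv{w_i}{x_j} = z_i\deriv{y_i}{x_j} + y_i\deriv{z_i}{x_j}$, again polynomial in $(y,z)$; for $1/g$ add $u_i = 1/z_i$, restrict the domain to a connected component of $\{x\in I':z_i(x)\neq 0\}$, and observe $\deriv{u_i}{x_j} = -u_i^2 \deriv{z_i}{x_j}$, polynomial in $(z,u)$. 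Subtraction and general quotient $f/g$ follow by composing these. Initial conditions for the new components are determined by evaluating the defining formulas at the chosen starting point.

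Composition $f\circ g$ is the most delicate step and is where I expect the main obstacle to lie. Writing $h(x) = f(g(x))$, the chain rule gives $J_h(x) = J_f(g(x))\cdot J_g(x)$. The right move is to consider the full extension $\tilde{y}(x) := y(g(x))$, which is defined on the open connected set $g^{-1}(I)\cap I'$ (or a connected component thereof containing a chosen base point); then $J_{\tilde{y}}(x) = p(\tilde{y}(x))\cdot J_g(x)$, and since $J_g(x)$ consists of the first rows of $q(z(x))$, the Jacobian of $(\tilde{y},z)$ is polynomial in $(\tilde{y},z)$ itself. For the initial condition I pick $x_0''$ in this common open connected set; the value $\tilde{y}(x_0'') = y(g(x_0''))$ is well defined because the generable system for $f$ admits a unique global solution on its connected domain (uniqueness being a standard consequence of the Jacobian-PDE system, together with analyticity of generable functions stated in the excerpt), so $y(g(x_0''))$ can be taken as a computable initial datum. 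The subtle verifications are thus (i) that the preimage of the domain of $f$ under $g$, intersected with the domain of $g$, is open and connected after passing to a component, and (ii) that the analytic extension of the auxiliary variables $y$ along the curve $g$ is unambiguous; both follow from connectedness of the original domains and uniqueness of solutions to the defining PDE systems. Once this is checked, the combined system $(\tilde{y},z)$ is a polynomial IVP exhibiting $f\circ g = (\tilde{y}_1,\ldots,\tilde{y}_m)$ as generable, completing the proof.
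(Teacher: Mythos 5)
The paper does not prove this lemma; it imports it verbatim from \cite{BournezGP16a} (this is why the statement is labelled with that citation), so there is no in-paper argument to compare yours against. Judged on its own merits, your proof is correct and follows the standard route for establishing closure of PIVP/generable classes: concatenate the two witness systems, adjoin auxiliary variables ($w=y+z$, $w=yz$, $u=1/z$) whose partial derivatives are manifestly polynomial in the enlarged state, restrict to an appropriate open connected component of the domain, and seed the new components by evaluating the defining formula at a common base point. The composition case is the one worth being most careful about, and your treatment is the right one: setting $\tilde{y}(x)=y(g(x))$ gives $J_{\tilde{y}}(x)=p(\tilde{y}(x))J_g(x)$, and since $g_1,\dots,g_m$ are components of $z$, the matrix $J_g(x)$ is literally a submatrix of $q(z(x))$, so the Jacobian of the pair $(\tilde{y},z)$ is polynomial in $(\tilde{y},z)$. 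Two small points you glide over, both harmless: the combined initial point may have to be re-based to some $x_0''$ in the (possibly smaller) connected component of $g^{-1}(\dom f)$, which forces you to also re-base the initial value of $z$ at $x_0''$ (fine, since Definition~\ref{def:gpac_generable_ext} allows arbitrary real initial data); and for the arithmetic operations one must insist $d=e$ (and $n=m$ where applicable) and then take a connected component of $\dom f\cap\dom g$, which is covered by the ``obvious dimensional condition'' caveat in the statement. With those caveats made explicit the argument is complete.
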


\begin{lem}[Generable functions are closed under ODE \cite{BournezGP16a}]\label{lem:gpac_ext_ivp_stable}
Let $d\in\N$, $J\subseteq\R$ an interval,
$f:\subseteq\R^d\rightarrow\R^d$ generable, $t_0\in J\cap\K$ and $y_0\in\dom{f}\cap\K^d$.
Assume there exists $y:J\rightarrow\dom{f}$ satisfying
\[y(t_0)=y_0,\qquad y'(t)=f(y(t))\]
for all $t\in J$, then $y$ is generable (and unique).
\end{lem}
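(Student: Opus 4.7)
The plan is to combine the ODE defining $y$ with the polynomial system witnessing the generability of $f$, using the chain rule, so that the pair $(y,f(y))$ together satisfies a single polynomial ODE in one variable $t$.

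More precisely, since $f$ is generable on $I=\dom{f}\subseteq\R^d$, unfolding Definition~\ref{def:gpac_generable_ext} I get an integer $n\geqslant d$, a polynomial matrix $p$ of size $n\times d$, a starting point $x_0\in\R^d$, a vector $z_0\in\R^n$, and an analytic map $z:I\to\R^n$ with $z(x_0)=z_0$, $J_z(x)=p(z(x))$, and $f_i(x)=z_i(x)$ for $i=1,\dots,d$. Setting $w(t)=z(y(t))$, the chain rule yields
\[
w'(t)=J_z(y(t))\,y'(t)=p(z(y(t)))\,f(y(t))=p(w(t))\,(w_1(t),\dots,w_d(t))^\top,
\]
while $y'(t)=f(y(t))=(w_1(t),\dots,w_d(t))$. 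So the concatenated vector $Y(t)=(y(t),w(t))\in\R^{d+n}$ satisfies a \emph{polynomial} ODE $Y'(t)=q(Y(t))$, where $q$ is built componentwise from the identity on the first $d$ coordinates of $w$ and from the matrix product $p(w)\cdot(w_1,\dots,w_d)$ on the last $n$. With the initial condition $Y(t_0)=(y_0,z(y_0))$, which is well defined because $y_0\in\dom{f}=I$, this exhibits $y$ as (the first $d$ coordinates of) a solution to a polynomial ODE in one variable, i.e.\ the $d=1$ case of Definition~\ref{def:gpac_generable_ext}. Hence $y$ is generable.

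For uniqueness, I use that generable functions are analytic, so $f$ is analytic on $I$ and in particular locally Lipschitz. The Picard--Lindel\"of theorem then gives local uniqueness of $y$, and the usual connectedness argument extends this to the whole interval $J$ (modulo replacing $J$ by its interior to meet the openness requirement of the definition; the extension to endpoints of $J$ is by continuity).

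The only mild obstacle is bookkeeping: one has to check that the construction stays inside the setup of Definition~\ref{def:gpac_generable_ext}, namely that $Y$ is defined on a connected open subset of $\R$ containing $t_0$, that the polynomial system $q$ does not depend on $t$, and that $z(y(t))$ is well defined throughout $J$ (which is guaranteed by the hypothesis $y(J)\subseteq\dom{f}$). Everything else is a routine chain-rule calculation and an appeal to Lemma~\ref{lemma:closure} is not even necessary.
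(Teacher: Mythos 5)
Your proof is correct, and it is a genuinely different route from the one in the paper. The paper deliberately does \emph{not} prove Lemma~\ref{lem:gpac_ext_ivp_stable} directly: it states that the lemma is a corollary of the much stronger Theorem~\ref{th:gpac_ext_ivp_stable}, whose proof is considerably more involved because it must handle a family of initial conditions parametrized by $\alpha\in\Omega\subseteq\R^d$. That proof requires several extra reduction steps (reducing the polynomial degree to $2$, reducing $G$ to the identity, and then a nontrivial calculation introducing the Jacobian with respect to $\alpha$ and the auxiliary variables $g=\deriv{f}{\alpha}$ and $w=\int_0^t g$). Your argument is exactly the \emph{first} reduction step of that proof --- composing the witnessing system $z$ with $y$ and applying the chain rule to obtain $w'(t)=p(w(t))(w_1,\dots,w_d)^\top$ and $y'(t)=(w_1,\dots,w_d)^\top$ --- and you correctly observe that in the one-dimensional, unparametrized setting of the lemma this already produces a polynomial IVP with a \emph{constant} initial condition $(y_0,z(y_0))$, so nothing more is needed. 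This buys a substantially shorter and more elementary self-contained proof of the lemma at the cost of not yielding the general theorem (which the rest of the paper actually needs). Your handling of the two mild technicalities --- openness of $J$ versus the openness required by Definition~\ref{def:gpac_generable_ext}, and uniqueness via analyticity and Picard--Lindel\"of --- is also fine.
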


Those results can be generalised to uniformly-generable functions with the obvious restrictions
on the domains and the roles of parameters. For example, if $f(\alpha;x)$ and $g(\beta;y)$
are uniformly-generable over $A\times X$ and $B\times Y$ respectively, then
$h(\alpha,\beta;y):=f(\alpha;g(\beta;y))$ is uniformly-generable over $A\times B\times Y$.
We will use those facts implicitly, and in particular the following result:

\begin{thm}[Uniformly-generable functions are closed under ODE]\label{th:unif_gpac_ext_ivp_stable}
    Let $d,m\in\N$, $\Gamma\subseteq\R^m$,
    $t_0\in\K$, $J$ an open interval containing $t_0$, $f_0:\Gamma\to\R^d$ a $(\rho^m,\rho^d)-$computable
    function and $F:\subseteq\Gamma\times\R^d\rightarrow\R^d$ uniformly-generable.
    Assume that there exists $f:\Gamma\times J\rightarrow\R^d$ satisfying\footnote{We are assuming
    that for all $\gamma\in\Gamma$, $(\gamma;f(\gamma;t))\in\dom{F}$.}
    \[f(\gamma;t_0)=f_0(\gamma),\qquad \deriv{f}{t}(\gamma;t)=F(\gamma;f(\gamma;t))\]
    for all $\gamma\in\Gamma$ and $t\in J$.
    Then $f$ is uniformly-generable (and unique).
\end{thm}

\begin{proof}
    Apply Definition~\ref{def:gpac_unif_generable_ext} to $F$ to get $n\in\N$, $x_0\in\K^n$,
    $y_0:\Gamma\to\R^n$ computable and $p$ polynomial matrix with coefficients in $\K$.
    Then given $\gamma\in\Gamma$, there exists $y:\subseteq \R^d\to\R^n$ such that
    \[y(x_0)=y_0(\gamma),\qquad J_y(x)=p(y(x))\]
    and $F(\gamma;x)=(y_1(x),\ldots,y_d(x))=:y_{1..d}(x)$ for all $(\gamma,x)\in\dom{F}$.
    Let $z(t)=y(f(\gamma;t))$ which is well-defined by assumption and
    check that
    \[
        z'(t)=J_y(f(\gamma;t))\deriv{f}{t}(\gamma;t)
            =p(y(f(\gamma;t)))F(\gamma;f(\gamma;t))
            =p(z(t))z_{1..d}(t)=q(z(t))
    \]
    for some polynomial $q$ that does not depend on $\gamma$, and $z(t_0)=y(f(\gamma;t_0))=y(f_0(\gamma))$
    which is a computable function of $\gamma$ since $f_0$ is computable and $(\gamma,x)\mapsto y(x)$
    is also computable (note that $y$ depends on $\gamma$) by Proposition~\ref{prop:gen_implies_computable}.
\end{proof}

An important point, which we have in fact already used in the proof of the previous proposition,
is that generable functions are always computable, in the sense of Computable Analysis.

\begin{prop}[Generable implies computable]\label{prop:gen_implies_computable}
    Assume $f:\Gamma\times I\rightarrow\R^e$ is uniformly generable
    according to Definition \ref{def:gpac_unif_generable_ext}:
    Hence there is a $(\rho^m,\rho^n)-$computable
    function $y_0:\Gamma\to\R^n$ and a  $n\times d$ matrix $p$ consisting of polynomials
    with coefficients in $\K$, $x_0\in\K^d$ that define 
    $y(\gamma;x_0)=y_0(\gamma)$ and $J_y(x)=p(y(\gamma;x))$.
    Then the function that maps $(\gamma,x)\in\Gamma\times I$ to $y(\gamma;x)$ 
    is $([\rho^m,\rho^d],\rho^n)-$computable. 
\end{prop}

\begin{proof}
We established in proposition \cite[Proposition 31]{BournezGP16a} that
$y(x)$ is necessarily real-analytic on some neighbourhood $V=V(x)$ of $x$ for
all $x$ that corresponds to some point of the domain of $f$.

Some explicit upper bound on the radius of convergence is provided by \cite[Theorem
5]{pouly2016computational}: Assuming $t_0=0$, $k=\deg(p) \ge 2$,
$\alpha=\max(1,\inorm{y_0}{})$, the radius is at least $1/M$ with $M=M(y_0)=
(k-1) \Sigma p \alpha^{k-1}$, where $\Sigma p$ is basically the sum of the
absolute value of the coefficients of polynomials in matrix $p$.

Consequently, using classical techniques for evaluating a converging
power series whose convergence radius is known up to a given precision
(by restricting the sum up to suitable index) we get that $y$ is
computable over the ball $V(y_0)$ of radius $1/(2M)$.

Computability of $y$ then follows from classical analytic
continuation techniques: A Turing machine can then extend the
computation starting from a new point $y_1$ in $V(y_0(\gamma))$, and then
repeat the above process to compute $y$ over some ball $V(y_1)$ of
radius $1/(2M(y_1))$, and so on. Repeating the process, eventually, it will
reach $x$ and will be able to compute $y(x)$.  Refer to 
\cite{kawamura_et_al:LIPIcs:2018:9612,HolgerThiesPhD} for similar techniques and a
finer complexity analysis. 
%
\end{proof}

\subsection{Helper functions and constructions}\label{sec:helper_gen}

We mentioned earlier that a number of common mathematical functions are generable.
However, for our purpose, we will need less common functions that one can consider
to be programming gadgets.

\begin{rem}
In this subsection, some of the functions will be introduced as mapping arguments to
value, i.e. as usual mathematical functions, but some others by the properties of their
solutions (e.g. $\reach$, $\pereach$, $\pil)$. In the latter case, an explicit expression
of a function satisfying those properties can be found in the proof.
\end{rem}

One such operation is rounding (computing the nearest integer).
Note that, by construction, generable functions are analytic and in particular
must be continuous. It is thus clear that we cannot build a perfect rounding function
and in particular we have to compromise on two aspects:
\begin{itemize}
\item we cannot round numbers arbitrarily close to $n+\tfrac{1}{2}$ for $n\in\Z$ because of continuity:
    thus the function takes a parameter $\lambda$ to control the size of the ``zone'' around
    $n+\tfrac{1}{2}$ where the function does not round properly;
\item we cannot round without error due to the uniqueness of analytic functions:
    thus the function takes a parameters $\mu$ that controls how good the approximation must be.
\end{itemize}

\begin{lem}[Round, \cite{BournezGP16a}]\label{lem:rnd}
There exists a generable function $\rnd$ such that
for any $n\in\Z$, $x\in\R$, $\lambda>2$ and $\mu\geqslant0$:
\begin{itemize}
\item if $x\in\left[n-\frac{1}{2},n+\frac{1}{2}\right]$
    then $|\rnd(x,\mu,\lambda)-n|\leqslant\frac{1}{2}$;
\item if $x\in\left[n-\frac{1}{2}+\frac{1}{\lambda},n+\frac{1}{2}-\frac{1}{\lambda}\right]$
    then $|\rnd(x,\mu,\lambda)-n|\leqslant e^{-\mu}$.
\end{itemize}
\end{lem}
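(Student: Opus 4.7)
The plan is to build the rounding function as a composition of generable primitives. The starting point is the generable sigmoid $\sigma(y, \mu) := \tfrac{1}{2}(1 + \tanh(\mu y))$, which is generable since $\tanh$ satisfies the ODE $\tanh'(y) = 1 - \tanh(y)^2$ (hence is generable by Lemma~\ref{lem:gpac_ext_ivp_stable}). One easily checks $|\sigma(y, \mu) - 1| \leqslant e^{-\mu}$ for $y \geqslant 1$ and $|\sigma(y, \mu)| \leqslant e^{-\mu}$ for $y \leqslant -1$. I also use generability of $\sin$ and $\cos$ and closure under arithmetic and composition (Lemma~\ref{lemma:closure}).

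First, I would construct a periodic safe-zone indicator $S(x, \mu, \lambda)$ that is exponentially close to $1$ on the safe intervals $[n - \tfrac{1}{2} + \tfrac{1}{\lambda}, n + \tfrac{1}{2} - \tfrac{1}{\lambda}]$ and exponentially close to $0$ in a small neighbourhood of each half-integer. Using the identity $\cos^2(\pi x) = \sin^2(\pi(x - n - \tfrac{1}{2}))$ together with the bound $\sin(\pi/\lambda) \geqslant 2/\lambda$ valid for $\lambda \geqslant 2$, one checks that $\lambda^2 \cos^2(\pi x) \geqslant 4$ throughout any safe zone, while $\lambda^2 \cos^2(\pi x) \leqslant 1$ whenever $|x - (n + \tfrac{1}{2})| \leqslant \tfrac{1}{\pi\lambda}$. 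Choosing a threshold between these two values, e.g.\ $S(x, \mu, \lambda) := \sigma(\lambda^2 \cos^2(\pi x) - 2, \mu)$, gives a generable, periodic function with the desired threshold behaviour.

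Next, I would define $\rnd$ so that it remains locked onto the nearest integer over each safe zone and increments by $1$ across each transition zone. The cleanest route is to solve the ODE $\rnd(0, \mu, \lambda) = 0$, $\deriv{\rnd}{x}(x, \mu, \lambda) = c(\lambda) \cdot (1 - S(x, \mu, \lambda))$, where $c(\lambda)$ normalises so that the total mass of $\rnd'$ over one transition zone equals exactly $1$. By Lemma~\ref{lem:gpac_ext_ivp_stable}, $\rnd$ is then generable. The coarse bound $|\rnd(x, \mu, \lambda) - n| \leqslant \tfrac{1}{2}$ on $[n - \tfrac{1}{2}, n + \tfrac{1}{2}]$ follows since $\rnd' \geqslant 0$ and the transition zones are pairwise disjoint, while the exponential bound reduces to showing that the value of $\rnd$ at the centre of the safe zone around $n$ matches $n$ up to an error of $e^{-\mu}$.

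The main obstacle is that a naive solution to the above ODE lets the small residual error in $\rnd'$ on safe zones (of size $O(\lambda e^{-\mu})$ per unit length) accumulate as $|x|$ grows, giving a total error of order $|x| \lambda e^{-\mu}$ that would violate the uniform bound required by the lemma. Overcoming this is the delicate part of the proof: one natural fix is to adopt a self-correcting ODE in which, whenever $S \approx 1$, a restoring term actively pulls $\rnd$ back toward the nearest integer so that past drift is forgotten; an alternative is to avoid an ODE altogether and build $\rnd$ as a direct generable composition in which the sigmoid selects, within each period, which of two adjacent integers is closest to $x$, producing a formula whose error is identical in every period. Once the accumulation issue is handled, verifying the two bounds of the lemma reduces to a routine case analysis of the sigmoid's approximation quality inside and outside the transition zones.
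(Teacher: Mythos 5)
The paper does not reprove this lemma; it is imported from \cite{BournezGP16a}. So your proposal must stand on its own, and unfortunately it stops exactly where the real work begins. You set up the sigmoid $\sigma$, the safe-zone indicator $S$, and the ODE $\deriv{\rnd}{x}=c(\lambda)(1-S)$, and you correctly observe that the residual $O(\lambda e^{-\mu})$ contribution of $\rnd'$ on safe zones accumulates linearly in $|x|$, which would give an error of order $|x|\,\lambda e^{-\mu}$ and thus cannot satisfy the \emph{uniform} bound $|\rnd(x,\mu,\lambda)-n|\leqslant e^{-\mu}$ demanded by the lemma. But you then only \emph{name} two possible repairs without carrying either through, and that is precisely the part that requires a new idea.

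The first repair you suggest, a ``self-correcting ODE with a restoring term pulling $\rnd$ toward the nearest integer,'' is circular as stated: the restoring term would have to know the nearest integer, which is exactly the quantity $\rnd$ is supposed to compute. The second repair, a direct closed-form periodic composition, is the right instinct and is essentially how such lemmas are proved, but you do not give a formula. The standard device is to write $\rnd(x,\mu,\lambda)=x-g(x,\mu,\lambda)$ where $g(\cdot,\mu,\lambda)$ is a \emph{$1$-periodic} generable approximation of the sawtooth $x\mapsto x-\lfloor x+\tfrac12\rfloor$ (for instance built from $\sin(2\pi x)$, $\cos(2\pi x)$, $\arctan$ and $\tanh$). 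Periodicity in $x$ is the whole point: it guarantees that the error at $x$ depends only on the fractional part of $x$, so there is no drift across periods and the bound is automatically uniform. Your ODE formulation destroys this symmetry by introducing an initial condition at $x=0$ and integrating forward. Until you exhibit a concrete periodic $g$ and verify the two bullet points for it, the proof is incomplete at its one delicate step, and the preceding construction of $S$ does not buy you anything toward closing that gap.

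A secondary issue: you would also need to check that whatever $g$ you build is generable jointly in $(x,\mu,\lambda)$ on a connected open domain (so that Definition~\ref{def:gpac_generable_ext} applies), and you should spell out that the parameter $\mu$ enters linearly enough that the exponential bound $e^{-\mu}$ is actually attained rather than, say, $\lambda e^{-\mu}$ or $e^{-\mu/2}$; absorbing a polynomial factor in $\lambda$ into the exponent typically requires replacing $\mu$ by $\mu+\ln\lambda$ internally, and this substitution must be made explicitly.
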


Another very useful operation is the analog equivalent of a discrete
assignment, done in a periodic manner. More precisely, we consider a
particular class of ODEs 
\[y'(t)=\pereach(t,\phi(t),y(t),g(t))\]
adapted from the constructions of \cite{ICALP2016}, where $g$ and $\phi$
are sufficiently nice functions.
Solutions to this equation alternate between two behaviours, for all $n\in\N$:
\begin{itemize}
\item During $J_n=[n,n+\tfrac{1}{2}]$, the system performs $y(t)\rightarrow\overline{g}$ for
    some $\bar{g}$ satisfying
    $\min_{t\in J_n}g(t)\leqslant\overline{g}\leqslant \max_{t\in J_n}g(t)$
    (note that this is voluntarily underspecified). So in particular,
    if $g(t)\approx\bar{g}$ over this time interval, then
    $y(t)\rightarrow\bar{g}$ and the system performs an ``assignment'' in
    the sense that $y(n+\tfrac{1}{2}):=\bar{g}$. Then $\phi$ controls how good the convergence is:
    the error is of the order of $e^{-\phi}$.
\item During $J_n'=[n+\tfrac{1}{2},n+1]$, the systems tries to keep $y$ constant,
ie $y'\approx 0$. More precisely, the system enforces that $|y'(t)|\leqslant e^{-\phi(t)}$.
\end{itemize}
As a result of this behavior, if $g(t)\approx\bar{g}$ for $t\in[n,n+\tfrac{1}{2}]$
then the system performs the ``assignment'' $y(n+1):=\bar{g}$ with some error that
is exponential small in $\phi$.

We now go to the proof of the existence of such a function $\pereach$
(formally stated as Theorem \ref{th:pereach}): 
We will need the following bound on $\tanh$, which essentially tells us that $\tanh(t)$
gets exponentially close (in $|t|$) to $\pm 1$ as $t\to\pm\infty$.

\begin{lem}\label{lem:tanh}
For any $t\in\R$, $|\tanh(t)-\sgn(t)|\leqslant e^{-|t|}$.
\end{lem}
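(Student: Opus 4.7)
The plan is to reduce to the case $t \geqslant 0$ using the fact that $\tanh$ is an odd function and $\sgn$ is an odd function, so the quantity $|\tanh(t) - \sgn(t)|$ is even in $t$. Thus it suffices to establish the inequality for $t \geqslant 0$, where $\sgn(t) = 1$ (and the case $t = 0$ is trivial since both sides equal $0$ on the left and $1$ on the right).

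For $t > 0$, I would use the explicit formula $\tanh(t) = \frac{e^t - e^{-t}}{e^t + e^{-t}}$ to compute
\[
1 - \tanh(t) = \frac{(e^t + e^{-t}) - (e^t - e^{-t})}{e^t + e^{-t}} = \frac{2 e^{-t}}{e^t + e^{-t}}.
\]
Since $\tanh(t) < 1$ for $t > 0$, the absolute value drops out, and it remains to check that
\[
\frac{2 e^{-t}}{e^t + e^{-t}} \leqslant e^{-t},
\]
which is equivalent to $e^t + e^{-t} \geqslant 2$. This last inequality is immediate from AM-GM applied to $e^t$ and $e^{-t}$, or equivalently from $(e^{t/2} - e^{-t/2})^2 \geqslant 0$.

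There is no real obstacle here; the only thing to be mindful of is the case $t = 0$ and the convention for $\sgn(0)$, but under the standard convention $\sgn(0) = 0$ we get $|\tanh(0) - \sgn(0)| = 0 \leqslant 1 = e^{-|0|}$, and for $t < 0$ we apply the computation above to $-t > 0$ and invoke oddness of both $\tanh$ and $\sgn$ to conclude.
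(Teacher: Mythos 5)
Your proof is correct. The paper states this lemma without proof (treating it as a standard fact), so there is nothing to compare against; your argument via $1-\tanh(t)=\frac{2e^{-t}}{e^t+e^{-t}}$ and $e^t+e^{-t}\geqslant 2$, together with oddness for $t<0$, is the natural and complete justification.
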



\begin{lem}[Reach, \cite{BournezGP16b}]\label{lem:reach}
There exists a generable function $\reach$ such that for any $\phi\in C^0(\Rp)$,
$g\in C^0(\R)$ and $y_0\in\R$, the unique solution to
\[y(0)=y_0,\qquad y'(t)=\phi(t)\reach(g(t)-y(t))\]
exists over $\Rp$. Furthermore, for any $I=[a,b]\subseteq[0,+\infty)$,
if there exists $\bar{g}\in\R$ and $\eta\in\Rp$ such that $|g(t)-\bar{g}|\leqslant\eta$
for all $t\in I$, then for all $t\in I$,
\[|y(t)-\bar{g}|\leqslant\eta+\exp\left(-\int_a^t\phi(u)du\right)\qquad\text{whenever }\int_a^t\phi(u)du\geqslant1.\]
Furthermore, for all $t\in I$,
\[\min(\bar{g}-\eta,y(a))\leqslant y(t)\leqslant\max(\bar{g}+\eta,y(a))\]
and in particular
\[|y(t)-\bar{g}|\leqslant\max(\eta,|y(a)-\bar{g}|).\]
\end{lem}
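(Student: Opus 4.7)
The natural candidate is $\reach(x):=\tanh(x)$, which is generable by Lemma~\ref{lemma:closure} since $\sinh$ and $\cosh$ are generable and $\cosh$ is nowhere zero. Existence and uniqueness of the solution on all of $\Rp$ follow from classical ODE theory: since $|\tanh|\leqslant 1$, one has $|y'(t)|\leqslant\phi(t)$, which is locally bounded because $\phi$ is continuous, so the solution cannot blow up in finite time; meanwhile the right-hand side is analytic in $y$, yielding uniqueness.

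Next, I would establish the ``trapping'' bound $|y(t)-\bar{g}|\leqslant\max(\eta,|y(a)-\bar{g}|)$ by a sign argument. Setting $z(t)=y(t)-\bar{g}$, the ODE becomes $z'(t)=\phi(t)\tanh((g(t)-\bar{g})-z(t))$. If $z(t)>\eta$, the argument of $\tanh$ is at most $\eta-z(t)<0$, forcing $z'(t)<0$; symmetrically when $z(t)<-\eta$. A standard barrier argument then shows that the interval $[-\max(\eta,|z(a)|),\max(\eta,|z(a)|)]$ is forward-invariant.

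For the exponential bound, I would combine the previous step with a Lyapunov-type estimate. When $z(t)>\eta$, set $W(t)=z(t)-\eta\geqslant 0$. Using monotonicity of $\tanh$ and $|g-\bar{g}|\leqslant\eta$, one obtains the differential inequality $W'(t)\leqslant -\phi(t)\tanh(W(t))$. The substitution $u=\sinh(W)$ linearizes it: $u'=\cosh(W)\,W'\leqslant -\phi\sinh(W)=-\phi u$, so Gronwall gives $u(t)\leqslant u(a)\exp(-\int_a^t\phi)$, and since $W\leqslant\sinh(W)$ for $W\geqslant 0$, we obtain $W(t)\leqslant\sinh(W(a))\exp(-\int_a^t\phi)$.

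The main obstacle is turning this estimate into the clean form $\eta+\exp(-\int_a^t\phi)$, since the factor $\sinh(W(a))$ can be large when $y(a)$ is far from $\bar{g}$. I would handle this by splitting into two regimes. When $W(a)$ is already small (say $W(a)\leqslant 1$), the $\sinh$ estimate directly yields the bound. When $W(a)$ is large, one exploits the near-sign behavior of $\tanh$: for $W\geqslant 1$, $\tanh(W)\geqslant\tanh(1)$, so $W$ crosses the threshold $W=1$ once a bounded amount of $\int\phi$ has accumulated, after which the linearized regime applies. Combining both regimes with the hypothesis $\int_a^t\phi\geqslant 1$ then yields the stated bound, and the symmetric case $z(t)<-\eta$ is identical.
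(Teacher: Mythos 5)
Your choice $\reach=\tanh$ cannot work, and the regime-splitting argument you sketch to rescue the exponential estimate does not close the gap. The fatal step is the claim that ``for $W\geqslant 1$, $\tanh(W)\geqslant\tanh(1)$, so $W$ crosses the threshold $W=1$ once a bounded amount of $\int\phi$ has accumulated.'' Because $|\tanh|\leqslant 1$ you always have $|W'(t)|\leqslant\phi(t)$, hence $W(a)-W(t)\leqslant\int_a^t\phi(u)\,du$; the amount of $\int\phi$ needed to drive $W$ from $W(a)$ down to $1$ is therefore at least $W(a)-1$, which is \emph{not} bounded independently of $W(a)$. With $\int_a^t\phi=1$ and $W(a)=10^6$, say, one still has $W(t)\geqslant W(a)-1\gg e^{-1}$, whereas the lemma asserts $W(t)\leqslant e^{-1}$.

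This is not a defect of your estimates but of the candidate: no \emph{bounded} $\reach$ can satisfy the lemma. If $|\reach|\leqslant M$, take $g\equiv\bar g$ (so $\eta=0$), $\phi\equiv 1$, $a=0$, and $y_0=\bar g+2M+1$; then $|y'(t)|\leqslant M$ forces $|y(1)-\bar g|\geqslant M+1>e^{-1}$, contradicting the conclusion at $t=1$ where $\int_0^1\phi=1$. So $\reach$ must be unbounded, and in fact superlinear: reducing to $W'\leqslant-\phi\,\reach(W)$ and time-changing by $\tau=\int_a^t\phi$ shows the requirement is $\int_{e^{-T}}^{\infty}\frac{ds}{\reach(s)}\leqslant T$ for every $T\geqslant 1$, uniformly over $W(a)$. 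One can check that $\reach(x)=\sinh(2x)$ or the odd cubic $\reach(x)=2x(1+x^2)$ satisfy this, while $\tanh$, the identity, and even $\sinh$ do not (for $\sinh$ the integral equals $-\log\tanh(e^{-T}/2)>T$). The present paper does not reprove the lemma but cites it from \cite{BournezGP16a}; whatever $\reach$ is used there must be superlinear, not $\tanh$. Your barrier argument for the trapping bound $|y(t)-\bar g|\leqslant\max(\eta,|y(a)-\bar g|)$ is correct and needs only that $\reach$ be increasing, odd and vanish at $0$; it is solely the exponential bound that fails with your choice.
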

\begin{proof}[Proof Remark]
The statement of \cite[Lemma~40]{BournezGP16b} only contains the first
and third inequalities but in fact the proof also contains the second inequality
(which is strictly stronger than the third but less immediate to use).
\end{proof}

\begin{figure}
    \begin{center}
    \begin{tikzpicture}[xscale=1.3,yscale=0.4,domain=0:3,samples=200]
        \draw[->] (-0.1,0) -- (3.1,0) node[right] {$t$};
        \draw[->] (1,-0.1) -- (1,7.1);
        \draw[red] plot function{\fnpil{0.1}{x}};
        \draw[blue] plot function{\fnpil{10}{x}};
    \end{tikzpicture}
    \end{center}
    \caption{Illustration of $\pil(\mu,\cdot)$ from Lemma~\ref{lem:plil} for various values of $\mu$: it has period $1$,
        is very small ($\leqslant e^{-\mu}$) half of the time, and the integral of the remaining half is at least $1$.\label{fig:pil}}
\end{figure}
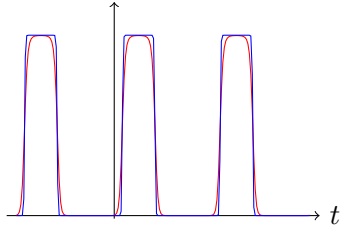

\begin{lem}[Periodic integral-low, see Figure~\ref{fig:pil}]\label{lem:plil}
There exists a generable function $\pil:\Rp\times\R\rightarrow\Rp$ such that:
\begin{itemize}
\item $\pil(\mu,\cdot)$ is 1-periodic, for any $\mu\in\Rp$;
\item $\int_0^{1/2}\pil(\mu(t),t)dt\geqslant1$ for any $\mu\in C^0(\Rp)$;
\item $|\pil(\mu,t)|\leqslant e^{-\mu}$ for any $\mu\in\Rp$ and $t\in\left[\tfrac{1}{2},1\right]$.
\end{itemize}
\end{lem}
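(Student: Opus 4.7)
The plan is to exhibit an explicit closed form and verify the three properties directly. I propose
\[\pil(\mu,t) = (2\sin(2\pi t)+1)\,e^{\mu(4\sin(2\pi t)-1)},\]
which is manifestly 1-periodic in $t$. Generability will follow from Theorem~\ref{th:gpac_ext_ivp_stable} after introducing the auxiliary state $(u,v,w)=(\sin(2\pi t),\cos(2\pi t),e^{\mu(4\sin(2\pi t)-1)})$: its partial derivatives
\[\partial_t u=2\pi v,\qquad\partial_t v=-2\pi u,\qquad\partial_t w=8\pi\mu vw,\qquad\partial_\mu w=(4u-1)w,\]
together with $\partial_\mu u=\partial_\mu v=0$ and the trivial equations for $\mu$ itself, form a polynomial system in $(\mu,u,v,w)$, and $\pil=(2u+1)w$ is polynomial in the state.

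The third property is immediate: on $[1/2,1]$, $\sin(2\pi t)\in[-1,0]$ forces $|2\sin(2\pi t)+1|\leqslant 1$ and $4\sin(2\pi t)-1\leqslant -1$, so $|\pil(\mu,t)|\leqslant 1\cdot e^{-\mu}$.

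The integral property is the only step that requires thought. The key observation is that an adversarial $\mu(t)$ can drive the exponential factor to zero wherever its exponent is negative, so the uniform lower bound of $1$ must already be achieved using only the region where the exponent is automatically non-negative. Let $I_+=\{t\in[0,1/2]:\sin(2\pi t)\geqslant 1/4\}$, on which $\mu(t)(4\sin(2\pi t)-1)\geqslant 0$ for any choice of $\mu(t)\geqslant 0$, so the exponential factor is at least $1$. Combined with the fact that $2\sin(2\pi t)+1\geqslant 1>0$ on all of $[0,1/2]$, this gives $\pil(\mu(t),t)\geqslant 0$ everywhere on $[0,1/2]$ and $\pil(\mu(t),t)\geqslant 2\sin(2\pi t)+1$ on $I_+$, so
\[\int_0^{1/2}\pil(\mu(t),t)\,dt\geqslant\int_{I_+}(2\sin(2\pi t)+1)\,dt=\frac{\sqrt{15}}{2\pi}+\frac{1}{2}-\frac{\arcsin(1/4)}{\pi}>1.\]

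The main delicacy is the calibration of the coefficient $4$: smaller coefficients shrink $I_+$ and make the integral above dip below $1$ (coefficient $3$ gives only $\approx 0.99$), while the constant $-1$ in the exponent is chosen so that the target $e^{-\mu}$ bound is attained exactly where $\sin(2\pi t)=0$.
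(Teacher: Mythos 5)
Your proof is correct, but the construction is genuinely different from the paper's. The paper takes
\[\pil(\mu,t)=A\left(1+\tanh\big(2(\sin(2\pi t)-\tfrac{1}{2})(A+\mu)\big)\right),\qquad A=3,\]
so the decay on $[\tfrac12,1]$ comes from the $\tanh$ saturation estimate $|\tanh(x)-\sgn(x)|\leqslant e^{-|x|}$ (Lemma~\ref{lem:tanh}), and the lower bound on the integral comes from $\tanh$ being bounded below on the shorter subinterval $[\tfrac18,\tfrac38]$, giving exactly $A/3=1$. Your version replaces the $\tanh$-gate by an explicit exponential $e^{\mu(4\sin(2\pi t)-1)}$ modulated by a nonnegative linear prefactor; the clean observation is that on $I_+=\{\sin(2\pi t)\geqslant\tfrac14\}$ the exponent is automatically nonnegative for every $\mu\geqslant0$, so the lower bound $\int_{I_+}(2\sin(2\pi t)+1)\,dt=\frac{\sqrt{15}}{2\pi}+\frac12-\frac{\arcsin(1/4)}{\pi}\approx1.036>1$ is uniform in $\mu$. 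This computation, the nonnegativity of $\pil$ on $[0,\tfrac12]$, and the $[\tfrac12,1]$ bound all check out, so the three bullet items are verified. Two minor remarks. First, for generability what you actually invoke is Definition~\ref{def:gpac_generable_ext} (or equivalently Lemma~\ref{lemma:closure}), not Theorem~\ref{th:gpac_ext_ivp_stable}: exhibiting the polynomial Jacobian for the state $(\mu,u,v,w)$ directly is exactly the definition, and no ODE-solving closure is needed; the cite is slightly off but the content is fine. Second, unlike the paper's $\pil$, which is uniformly bounded by $2A$, your $\pil$ is unbounded in $\mu$ wherever $\sin(2\pi t)>\tfrac14$. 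The lemma as stated does not require boundedness, so your proof is complete; but this is a weaker structural property than the paper's construction provides, and it is worth noting before plugging your $\pil$ into $\pereach$ that the argument there only relies on the three stated bullets (the push toward $g$ in $\reach$ is stabilizing, so a large $\pil$ on the first half-period only helps).
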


\begin{proof}
For any $t\in\R$ and $\mu\in\Rp$, let
\[\pil(\mu,t)=A\left(1+\tanh\big(2(\sin(2\pi t)-\tfrac{1}{2})(A+\mu)\big)\right).\]
where $A=3$.
Clearly $\pil$ is generable and 1-periodic in $t$. Let $\mu\in\Rp$ and $t\in[\tfrac{1}{2},1]$,
then
\begin{align*}
    \sin(2\pi t)&\leqslant0\\
    2(\sin(2\pi t)-\tfrac{1}{2})(A+\mu)&\leqslant-A-\mu\\
    |\tanh(2(\sin(2\pi t)-\tfrac{1}{2})(A+\mu))-(-1)|&\leqslant e^{-A-\mu}&&\text{using Lemma~\ref{lem:tanh}}\\
    A|\tanh(2(\sin(2\pi t)-\tfrac{1}{2})(A+\mu))-(-1)|&\leqslant Ae^{-A-\mu}\\
    |\pil(\mu,t)|&\leqslant e^{-\mu}.\\
\end{align*}
Let $\mu\in C^0(\Rp)$ and $t\in[0,\tfrac{1}{2}]$. Observe that $\pil(\mu(t),t)\geqslant0$
and, furthermore, if $t\in[\tfrac{1}{8},\tfrac{3}{8}]$ then
\begin{align*}
    \sin(2\pi t)&\geqslant\frac{\sqrt{2}}{2}\\
    2(\sin(2\pi t)-\tfrac{1}{2})(A+\mu(t))&\geqslant\sqrt{2}-1&&\text{since }A+\mu(t)\geqslant1\\
    \tanh(2(\sin(2\pi t)-\tfrac{1}{2})(A+\mu(t)))&\geqslant\tanh(\sqrt{2}-1)\geqslant\frac{1}{3}\\
    \pil(\mu(t),t)&\geqslant \tfrac{4}{3}A.
\end{align*}
It follows that
\[
\int_0^{\tfrac{1}{2}}\pil(\mu(t),t)dt
    \geqslant\int_{\tfrac{1}{8}}^{\tfrac{3}{8}}\pil(\mu(t),t)dt
    \geqslant \left(\frac{3}{8}-\frac{1}{8}\right)\frac{4}{3}A\geqslant\frac{A}{3}\geqslant1.
    \tag*{\qedhere}
\]
\end{proof}

\begin{thm}[Periodic reach]\label{th:pereach}
There exists a generable function $\pereach:\Rp^2\times\R^2\to\R$ such that for any $I=[n,n+1]$ with $n\in\N$,
$y_0\in\R$, $\phi,\psi\in C^0(I,\Rp)$ and $g\in C^0(I,\R)$, the unique solution to
\[y(n)=y_0,\qquad y'(t)=\psi(t)\pereach(t,\phi(t),y(t),g(t))\]
exists over $I$. Furthermore,
\begin{enumerate}[label={\bfseries(\roman*)}]
\item\label{th:pereach:conv} For all $\bar{g}\in\R$ and $\eta,\theta\in\Rp$
such that $|g(t)-\bar{g}|\leqslant\eta$ and $\psi(t)\phi(t)\geqslant\theta\geqslant1$
for all $t\in[n,n+\tfrac{1}{2}]$, we have that
$|y(n+\tfrac{1}{2})-\bar{g}|\leqslant\eta+\exp\left(-\theta\right)$.
\item\label{th:pereach:bounded} For all $t\in[n,n+1]$, $\bar{g}\in\R$ and $\eta\in\Rp$
such that $|g(u)-\bar{g}|\leqslant\eta$ for all $u\in[n,t]$, we have that
$|y(t)-\bar{g}|\leqslant\max(\eta,|y(n)-\bar{g}|)$.
\item\label{th:pereach:constant} For all $t\in[n+\tfrac{1}{2},n+1]$,
$|y(t)-y(n+\tfrac{1}{2})|\leqslant\int_{n+\tfrac{1}{2}}^t\psi(u)\exp\left(-\phi(u)\right)du$.
\item\label{th:pereach:lower_conv} For all $\theta\in\Rp$ such that $\psi(t)\phi(t)\geqslant\theta\geqslant1$
for all $t\in[n,n+\tfrac{1}{2}]$,  we have that
$y(n+\tfrac{1}{2})\geqslant\min_{u\in[n,n+\tfrac{1}{2}]}g(u)-\exp\left(-\theta\right)$.
\item\label{th:pereach:lower_bound} For all $t\in [n,n+1]$,
    $\min\left(y(n),\min_{u\in[n,t]}g(t)\right)\leqslant y(t)\leqslant\max\left(y(n),\max_{u\in[n,t]}g(t)\right)$.
\end{enumerate}
\end{thm}

\begin{proof}[Proof]
Define $\pereach(t,\phi,y,g)=\pil(\phi+r^2,t)r$ where
$r=\phi\reach(g-y)$ where $\pil$ is defined in Lemma~\ref{lem:plil} and
$\reach$ is defined in Lemma~\ref{lem:reach}. Fix $n\in\N$ and $I=[n,n+1]$.
First notice that $\pil$ is nonnegative by Lemma~\ref{lem:plil} thus by Lemma~\ref{lem:reach},
the solution must exists over $I$. We now prove each point separately:
\begin{enumerate}[label={\bfseries(\roman*)}]
\item We have that
    \[\int_n^{n+\tfrac{1}{2}}\pil(\phi(u)+r(u)^2,u)\psi(u)\phi(u)du
    \geqslant\theta\int_n^{n+\frac{1}{2}}\pil(\phi(u)+r(u)^2,u)du
    \geqslant\theta\geqslant1\]
    by Lemma~\ref{lem:plil}. Thus
    $|y(n+\tfrac{1}{2})-\bar{g}|\leqslant\eta+e^{-\theta}$ by Lemma~\ref{lem:reach}.
\item Apply Lemma~\ref{lem:reach} to the interval $[n,t]$.
\item We have that
    \[|y'(t)|=|\psi(t)\pil(\phi(t)+r(t)^2,u)r(t)|\leqslant \psi(t)e^{-\phi(t)-r(t)^2}|r(t)|\leqslant \psi(t)e^{-\phi(t)}\]
    by Lemma~\ref{lem:plil}, for all $t\in[n+\tfrac{1}{2},n+1]$. The inequality follows by integration.
\item Let $m=\min_{u\in[n,n+\tfrac{1}{2}]}g(u)$ and $M=\max_{u\in[n,n+\tfrac{1}{2}]}g(u)$, define
    $\bar{g}=\tfrac{m+M}{2}$ and $\eta=\frac{M-m}{2}$. Then the assumptions of item \ref{th:pereach:conv} are satisfied
    and we get that $|y(n+\tfrac{1}{2})-\bar{g}|\leqslant\eta+e^{-\theta}$ so in particular
    $y(t)\geqslant\bar{g}-\eta-e^{-\theta}$ but $\bar{g}-\eta=m$ so this concludes.
\item The last item is more subtle because we want to use item \ref{th:pereach:bounded} but we do not know
    if $y(n)-\bar{g}$ and $y(t)-\bar{g}$ have the same sign. Let
    $m=\min_{u\in[n,t]}g(u)$ and $M=\max_{u\in[n,t]}g(u)$, define
    $\bar{g}=\tfrac{m+M}{2}$ and $\eta=\frac{M-m}{2}$.
    Then the assumptions of Lemma~\ref{lem:reach} are satisfied over $[n,t]$
    and we get that $\min(\bar{g}-\eta,y(n))\leqslant y(t)$ but $\bar{g}-\eta=m$
    so this concludes.
    \qedhere
\end{enumerate}
\end{proof}

\subsection{Computable Analysis and Representations}
\label{sec:ccastuff}
In order to prove the computability of the map
$(f,\varepsilon)\mapsto\alpha$ in Theorems~\ref{th:universal_pivp}
and~\ref{th:universal_dae}, we need to express the related notion of
computability for real numbers, functions and operators. We recall
here the related concepts:
Computable Analysis, specifically Type-2 Theory
of Effectivity (TTE) \cite{Wei00}, is a theory to study
algorithmic aspects of real numbers, functions and higher-order operators over real numbers.
Subsets of real numbers are also of great interest to this theory but will not need them in this
paper. This theory is based on classical notions of computability (and complexity) of Turing
machines which are applied to problems involving real numbers, usually by means of (effective)
approximation schemes. We refer the reader to \cite{Wei00,BHW09,ccatutorial} for  tutorials on Computable
Analysis. In order to avoid a lengthy introduction on the subject, we simply introduce the
elements required for the paper at a very high level. In what follows, $\Sigma$ is a finite alphabet.

The core concept of TTE is that of \emph{representation}: a representation of a space $X$
is simply a surjective function $\delta:\subseteq\Sigma^\omega\to X$. If $x\in X$ and $p\in\Sigma^\omega$
is such that $\delta(p)=x$ then $p$ is called a \emph{$\delta$-name} of $x$: $p$ is one way of
describing $x$ with a (potentially infinite) string. In TTE, all computations are done on infinite
string (names) using Type 2 machines, which are Turing machines operating on infinite strings but
where each bit of the output only depends on a finite prefix of the input. Type 2 machines give
rise to the notion of computable functions from $\Sigma^\omega$ to $\Sigma^\omega$. Given two
representations $\delta_X,\delta_Y$ of some spaces $X$ and $Y$, one can define two interesting notions:
\begin{itemize}
\item $\delta_X$-computable elements of $X$: those are the elements $x$ such that $\delta_X(p)=x$
    for some \emph{computable name} $p$ ($p:\N\to\Sigma$ is computable by a usual Turing machine);
\item $(\delta_X,\delta_Y)$-computable functions from $X$ to $Y$: those are the functions
    $f:\subseteq X\to Y$ for which we can find a \emph{computable realiser}
    $F$ ($F:\subseteq\sigma^\omega\to\omega$ is computable by a Type 2 machine) such that
    $f\circ\delta_X=\delta_Y\circ F$.
    \begin{center}
    \begin{tikzpicture}
        \matrix (M) [matrix of math nodes, row sep=1cm, column sep=1cm]
        {
            X & Y\\
            \Sigma^\omega & \Sigma^\omega \\
        };
        \draw[->] (M-1-1) -- (M-1-2) node[midway,above] {$f$};
        \draw[->] (M-2-1) -- (M-2-2) node[midway,above] {$F$};
        \draw[->] (M-2-1) -- (M-1-1) node[midway,left] {$\delta_X$};
        \draw[->] (M-2-2) -- (M-1-2) node[midway,right] {$\delta_Y$};
    \end{tikzpicture}
    \end{center}
\end{itemize}

In this paper, we will only need a few representations to manipule real numbers, sequences and
continuous real functions:
\begin{itemize}
\item $\nu_\N:\subseteq\Sigma^\omega\to\N$ is a representation of the
  integers. The
    details of the encoding at not very important, since natural
    representations such as 
    unary and binary representations are equivalent.
\item $\nu_\Q:\subseteq\Sigma^\omega\to\N$ is a representation of the rational numbers, again the
    details of the encoding at not very important for natural
    representations. 
\item $\rho:\subseteq\Sigma^\omega\to\R$ is the \emph{Cauchy representation} of real numbers which
    intuitively encodes a real number $x$ by a converging sequence of intervals $[l_n,r_n]\ni x$
    of rationals numbers. Alternatively, one can also use Cauchy sequences with a known rate of
    convergence.
\item $[\delta_X,\delta_Y]:\subseteq\Sigma^\omega\to X\times Y$ is the representation of
    pairs of elements of $(X,Y)$ where the first (resp. second) component uses $\delta_X$ (resp. $\delta_Y$).
    In particular, $\delta^k$ is a shorthand notation of the representation $[\delta,[\delta,[\ldots]]]$
    of $X^k$. In this paper we will often use $\rho^k$ to represent $\R^k$.
\item $\delta^\omega:\subseteq\Sigma^\omega\to X^\N$ is the representation of sequences of
    elements of $X$, represented by $\delta$. For example $\rho^\omega$ can be used to
    represent sequences of real numbers.
\item $[\delta_X\to\delta_Y]_Z: \subseteq\Sigma^\omega\to C^0(Z,Y)$ is the representation of
    \emph{continuous\footnote{Without giving too much details, this requires $X$ and $Y$ to be
    $T_0$ spaces with countable basis and $\delta_X,\delta_Y$ to be admissible. It will be enough
    to know that $\rho$ is admissible for the usual topology on $\R$.} functions} from $Z\subseteq X$ to $Y$,
    we omit $Z$ if $Z=X$.
    We will mostly need $[\rho^k\to\rho]$ which represents\footnote{Technically, is equivalent
    to a representation of.} $C^0(\R^k,\R)$ as a list of boxes which enclose the graph of the
    function with arbitrary precision. Informally, it means we can ``zoom'' on the graph of the function
    and plot it with arbitrary precision.
\end{itemize}

It will be enough for the reader to know that those representations are well-behaved. In particular,
the following functions are computable (we always use $\rho$ to represent $\R$):
\begin{itemize}
\item the arithmetical operations $+,-,\cdot,/:\subseteq\R\times\R\to\R$,
\item polynomials $p:\R^n\to\R$ with computable coefficients,
\item elementary functions $\cos,\sin,\exp$ over $\R$.
\end{itemize}
Furthermore, the following operators on continuous functions are computable:
\begin{itemize}
\item the arithmetical operators $+,-,\cdot,/:\subseteq C^0(\R)\times C^0(\R)\to C^0(\R)$,
\item composition $\circ:C^0(X,Y)\times C^0(Y,Z)\to C^0(X,Z)$,
\item inverse $\cdot^{-1}:C^0(X,Y)\to C^0(Y,X)$ for increasing (or decreasing) functions,
\item evaluation $C^0(X,Y)\times X\to Y$, $(f,x)\mapsto f(x)$.
\end{itemize}
We will also use the fact that the map $X^\N\times\N\to X, (x,i)\mapsto x_i$ is
$([\delta^\omega,\nu_N],\delta)$-computable for any space $X$ represented by $\delta$.

Refer to \cite{Wei00,BHW09,ccatutorial} for more complete
discussions, and in particular to
\cite{kawamura_et_al:LIPIcs:2018:9612,HolgerThiesPhD}  for
computability and complexity issues related to ordinary differential
equations solving. 

\section{Generating fast growing functions}\label{sec:fast}

Our construction crucially relies on our ability to build functions of arbitrary
growth. At the end of this section, we obtain a function $\fastgentext$ with a straightforward
specification: for any infinite sequence $a_0,a_1,\ldots$ of positive numbers, we can find
a suitable $\alpha\in\R$ such that $\fastgen(\alpha;n)\geqslant a_n$ for all $n\in\N$.
Furthermore, we can ensure that $\fastgen(\alpha;\cdot)$ is increasing. Notice,
and this is the key point, that the definition of $\fastgentext$ is independent
of the sequence $a$: a single generable function (and thus differential system)
can have arbitrary growth by simply tweaking its initial value.

Our construction builds on the following lemma proved by \cite{Bank75}, based on an example
of \cite{BBV37}. The proof essentially relies
on the function $\tfrac{1}{2-\cos(x)-\cos(\alpha x)}$ which is generable and well-defined
for all positive $x$ if $\alpha$ is irrational. By carefully choosing $\alpha$,
we can make $\cos(x)$ and $\cos(\alpha x)$ simultaneously arbitrary close to $1$.
This function is illustrated on Figure~\ref{fig:pre_fastgen}.

\begin{figure}
    \begin{center}
    \begin{tikzpicture}[xscale=0.13,yscale=0.05]
        \draw[->] (0.9,0) -- (100.1,0) node[right] {$t$};
        \draw[->] (1,-0.1) -- (1,115.1);
        \draw[red,domain=1:99,samples=1000] plot function{1/(2-cos(x)-cos(0.6971153242*x))};
        \draw[blue,densely dotted,smooth] plot file{lmcs.prefastgen_g.table};
        \node[draw=gray!75!black,fill=gray!15,rounded corners=3pt,text width=5cm,anchor=north west] (nodespike) at (5,85)
            {Sequence of \textbf{arbitrarily}\\\textbf{growing} spikes.
            \emph{But the distance between them increases as well.}};
        \draw[gray!75!black,->] (nodespike) -- (62,88);
        \draw[gray!75!black,->] (nodespike) -- (19,10);
    \end{tikzpicture}
    \end{center}
    \caption{Illustration of $g$ (in dotted blue) from Lemma~\ref{lem:pre_fastgen}: we start from a function $f$ (in red) that spikes
        and then integrate it to make it increasing.\label{fig:pre_fastgen}}
\end{figure}
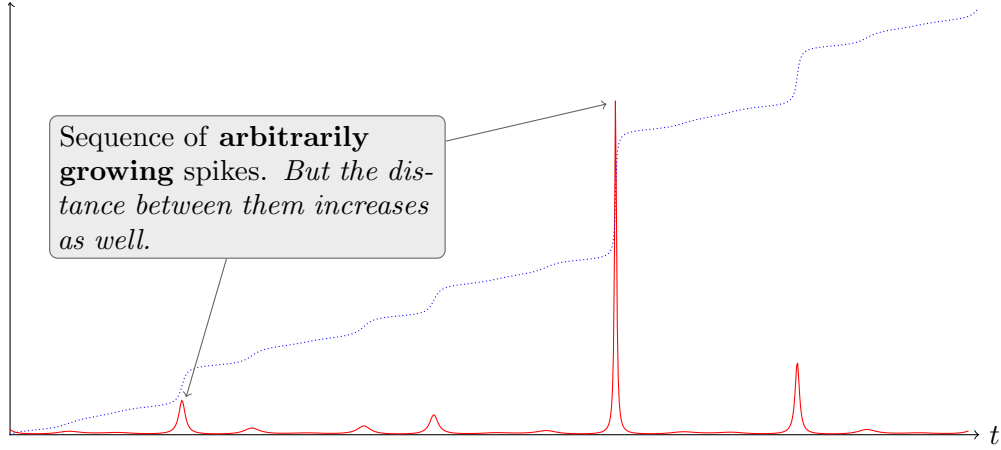

\begin{lemC}[\cite{Bank75}]\label{lem:pre_fastgen}
There exists a positive generable function $g$ and an absolute constant $c>0$
such that for any increasing sequence $a\in\N^\N$ with $a_n\geqslant2$ for all $n$, there exists
$\alpha\in\R$ such that $g(\alpha,\cdot)$ is defined over $[1,\infty)$, nondecreasing and for
any $n\in\N$ and $t\geqslant 2\pi b_n$, $g(\alpha,t)\geqslant ca_n$
where $b_n=\prod_{k=0}^{n-1}a_k$.
Furthermore, the map $a\mapsto\alpha$ is $(\nu_\N^\omega,\rho)$-computable.
\end{lemC}

\begin{proof}
We give a sketch of the proof, following the
presentation from \cite{Bank75}.
For any $\alpha\in\R$ and $t>0$, let
\[f(\alpha,t)=\frac{1}{2-\cos(t)-\cos(\alpha t)}.\]
Since $\sin$ and $\cos$ are generable, it follows that $f$ is generable because
it has a connected domain of definition. Indeed, $f(\alpha,t)$ is well-defined
except on $$X=\set{(\alpha,2k\pi):\alpha\in\Q,k\in\N,k\alpha\in\N}$$ which is a totally
disconnected set in $\R^2$.
Let
\begin{equation}\label{eq:pre_fastgen:def_alpha}
\alpha_a=\sum_{n=1}^\infty b_n^{-1}
\qquad\text{where }b_n=\prod_{k=0}^{n-1}a_k
\end{equation}
which is well-defined if $a_n$ is a strictly increasing sequence. Indeed, it implies
that $b_n\geqslant (n-1)!$ and $\alpha_a\leqslant\sum_{n=0}^\infty\tfrac{1}{n!}=e$.
One can easily show (by contradiction for example) that $\alpha_a$ must be irrational. Also define
\[g(\alpha,t)=\int_1^tf(\alpha,u)du\]
which is generable. Let $n\in\N$,
define $\delta_n=\sum_{k=n+1}^{\infty}\tfrac{b_n}{b_k}$.
Let $t\in[2\pi(b_n-\delta_n),2\pi b_n]$, write $\varepsilon=2\pi b_n-t\in[0,2\pi\delta_n]$ and observe that
\[1-\cos(t)=1-\cos(t-2\pi b_n)=1-\cos(\varepsilon)\leqslant \varepsilon^2\leqslant 4\pi^2\delta_n^2.\]
Furthermore,  and note that
\begin{align*}
1-\cos(\alpha t)
    &=1-\cos(2\pi\alpha b_n-\varepsilon)\\
    &=1-\cos\left(2\pi\sum_{k=0}^{n}\tfrac{b_n}{b_k}
        +2\pi\sum_{k=n+1}^{\infty}\tfrac{b_n}{b_k}-\varepsilon\right)\\
    &=1-\cos\left(2\pi\sum_{k=n+1}^{\infty}\tfrac{b_n}{b_k}-\varepsilon\right)&&\text{since }\tfrac{b_n}{b_k}\in\N\text{ for }k\leqslant n\\
    &=1-\cos\left(2\pi\delta_n-\varepsilon\right)\\
    &\leqslant \left(2\pi\delta_n-\varepsilon\right)^2&&\text{since }1-\cos(x)\leqslant x^2\\
    &\leqslant \left(2\pi\delta_n\right)^2&&\text{since }\varepsilon\leqslant 2\pi\delta_n.
\end{align*}
It follows that
\[f(\alpha_a,t)
    =(1-\cos(t)+1-\cos(\alpha t))^{-1}
    \geqslant(4\pi^2\delta_n^2+4\pi^2\delta_n^2)^{-1}
    \geqslant\tfrac{1}{8\pi^2\delta_n^2}.
\]
Thus
\begin{align*}
g(\alpha_a,2\pi b_n)
    &=\int_1^{2\pi b_n}f(\alpha_a,t)dt\\
    &\geqslant\int_{2\pi (b_n-\delta_n)}^{2\pi b_n}f(\alpha_a,t)dt&&\text{since $f$ is positive}\\
    &\geqslant\int_{2\pi (b_n-\delta_n)}^{2\pi b_n}\tfrac{1}{8\pi^2\delta_n^2}dt\\
    &=\tfrac{\delta_n}{8\pi^2\delta_n^2}=\tfrac{1}{8\pi^2\delta_n}.
\end{align*}
But note that
\begin{align*}
\delta_n
    &=\sum_{k=n+1}^{\infty}\tfrac{b_n}{b_k}
    \leqslant \sum_{k=n+1}^{\infty}a_n^{n-k}
        &&\text{since }\tfrac{b_n}{b_k}=(a_n\cdots a_{k-1})^{-1}\\
    &=\frac{a_n^{-1}}{1-a_n^{-1}}
    \leqslant2a_n^{-1}&&\text{since }a_n\geqslant2.
\end{align*}
It then easily follows that
\[g(\alpha_a,2\pi b_n)\geqslant \tfrac{a_n}{16\pi^2}\]
and the result follows from the fact that $g$ is nondecreasing.

The computability of the map $a\mapsto\alpha$ is the only missing result. It is immediate from
\eqref{eq:pre_fastgen:def_alpha} that the map $(a,n)\mapsto b_n$ is $([\nu_\N^\omega,\nu_\N],\nu_\N)$-computable
since each $b_n$ is a product of finitely many $a_i$. Furthermore, $b_n\geqslant(n-1)!$ thus for
any $n\geqslant 1$,
\[\left|\alpha_a-\sum_{i=1}^nb_i^{-1}\right|
    \leqslant\sum_{i\geqslant n-1}\frac{1}{(i-1)!}
    \leqslant\sum_{i\geqslant n}\frac{1}{i!}
    \leqslant\sum_{i\geqslant 0}\frac{1}{n!2^i}
    \leqslant\tfrac{2}{n!}\leqslant 2^{2-n}.
\]
It follows that $\left(\sum_{i=1}^nb_i^{-1}\right)_{i\in\N}$ is a Cauchy sequence of $\alpha_a$
of known convergence rate. It suffice to note that $(b,n)\mapsto\sum_{i=1}^nb_i^{-1}$ is
$([\nu_\N^\omega,\nu_\N],\rho)$-computable, since it only involves a finite number of sum and inverses
of real numbers.
\end{proof}

\begin{rem}
    As noted earlier, Lemma~\ref{eq:pre_fastgen:def_alpha} and the Theorem~\ref{th:fastgen}
    build \emph{partial functions}. Indeed we only show that the solution exists at all
    times $t\in\Rp$ for certain well-chosen $\alpha$. In particular, it can be easily checked
    that the ODE in Lemma~\ref{eq:pre_fastgen:def_alpha} explodes in finite time for all rational
    $\alpha$.
\end{rem}

Essentially, Lemma \ref{lem:pre_fastgen}
proves that there exists a function $g$ such that for any
$n\in\N$, $g(\alpha,a_0a_1\cdots a_{n-1})\geqslant a_n$. Note that this is not quite
what we are aiming for: the function $g$ is indeed $\geqslant a_n$ but at times
$a_0a_1\cdots a_{n-1}$ instead of $n$. Since $a_0a_1\cdots a_{n-1}$ is a very big number
compared to $n$, $g$ does not grow fast enough for our needs. The idea is to ``accelerate''
$g$ by composing it with a fast growing function $h$, ideally such that $h(n)\geqslant a_0\cdots a_{n-1}$.
This would ensure that $g(h(n))\geqslant n$. This is a chicken-and-egg
problem because to build such a function $h$, we need to build a fast growing function!
We now try to explain how to solve this problem.

Fix a sequence $(a_n)_n$ and let
$g$ be the function from Lemma~\ref{lem:pre_fastgen} and
$\alpha_a$ be the parameter that corresponds to $a$ (we omit the $\alpha_a$ for readability so $g(x)=g(\alpha_a,x)$).
Consider the following sequence:
\[x_0=a_0,\qquad x_{n+1}=x_n g(x_n).\]
Then observe that
\[x_1=x_0 g(x_0)=a_0 g(a_0)\geqslant a_0a_1,
\qquad x_2=x_1 g(x_1)\geqslant a_0a_1 g(a_0a_1)\geqslant a_0a_1a_2,
\quad\ldots\]
It is not hard to see that $x_n\geqslant a_0a_1\cdots a_n\geqslant a_n$. We then
use our generable gadget of Section~\ref{sec:helper_gen} to simulate this discrete
sequence with a differential equation. Intuitively, we build a differential equation
such that the solution $y$ satisfies $y(n)\approx x_n$. More precisely, we use two
variables $y$ and $z$ such that over $[n,n+1/2]$, $z'\approx 0$ and $y(t)\rightarrow z g(z)$
and over $[n+1/2,n+1]$, $y'\approx 0$ and $z(t)\rightarrow y$. Then if $y(n)\approx z(n)\approx x_n$
then $y(n+1)\approx z(n+1)\approx x_{n+1}$.

\begin{thm}\label{th:fastgen}
There exists $\Gamma\subseteq\R$ and a positive uniformly-generable function $\fastgen:\Gamma\times\Rp\to\R$
such that for any $x\in\Rp^\N$, there exists $\alpha\in\Gamma$ such that for any $n\in\N$ and $t\in\Rp$,
\[\fastgen(\alpha;t)\geqslant x_n\qquad\text{if }t\geqslant n.\]
Furthermore, $\fastgen(\alpha;\cdot)$ is nondecreasing.
In addition, the map $x\mapsto\alpha$ is $(\rho^\omega,\rho)$-computable.
\end{thm}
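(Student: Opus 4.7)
The plan is to turn the continuous-time gadget $\pereach$ of Theorem~\ref{th:pereach} into a simulator of the self-accelerating discrete iteration sketched before the statement, using the raw fast growth of Lemma~\ref{lem:pre_fastgen} as its ``engine''.

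Given $x\in\Rp^\N$, first preprocess it into a strictly increasing integer sequence $a\in\N^\N$ with $a_n\geqslant 2$, chosen large enough that $2\pi b_n\geqslant x_n$ (with $b_n=\prod_{k<n}a_k$); since $b_n$ can be made to grow as fast as desired by picking each $a_n$ sufficiently large, this is achievable by a straightforward recursive definition and is $(\rho^\omega,\nu_\N^\omega)$-computable. Apply Lemma~\ref{lem:pre_fastgen} to $a$ to obtain $\alpha_a\in\R$ and a fixed positive nondecreasing generable $g$ with $g(\alpha_a,t)\geqslant ca_n$ for $t\geqslant 2\pi b_n$; absorbing the constant $c$ into the preprocessing, we may assume $c=1$. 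Now consider the abstract discrete iteration $u_0=2\pi$, $u_{n+1}=u_ng(\alpha_a,u_n)$: a one-line induction (base $u_0=2\pi b_0$; step $u_{n+1}\geqslant u_n a_n\geqslant 2\pi b_{n+1}$) shows $u_n\geqslant 2\pi b_n\geqslant x_n$ for every $n\in\N$.

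To realize this recursion in continuous time, introduce two scalar variables $y,z$ governed by two instances of $\pereach$ phase-shifted by $\tfrac12$: on $[n,n+\tfrac12]$, $y$ is driven towards $G(z):=zg(\alpha_a,z)$ while $z$ is essentially frozen, and on $[n+\tfrac12,n+1]$ the roles swap. Formally,
\[y'(t)=\psi(t)\pereach\!\left(t,\phi(t),y(t),G(z(t))\right),\qquad z'(t)=\psi(t)\pereach\!\left(t+\tfrac12,\phi(t),z(t),y(t)\right),\]
with the \emph{fixed} initial condition $y(0)=z(0)=2\pi$, so that the whole system is parameterised by the single real $\alpha:=\alpha_a$ (this is what makes $\Gamma\subseteq\R$ possible). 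A joint induction on $n$ using the $\min$-lower-bound at the end of Theorem~\ref{th:pereach} during the active half-periods, together with the drift bound of its second item during the frozen half-periods, yields $\min(y(n),z(n))\geqslant u_n/2$; monotonicity of $y,z$ is preserved at every step because the target of each $\pereach$ always dominates the current value, so the underlying $\reach$ only pulls upwards. The schedule $\phi,\psi$ must be chosen so that each period's relative error stays below a fixed geometric factor, which requires $\phi$ to grow with the state itself; a generable choice that works is $\phi(t)=\exp\!\bigl(t(1+y(t)^2+z(t)^2)\bigr)$ and $\psi(t)\equiv 1$, both generable by Lemma~\ref{lemma:closure} and closure of generable functions under $\exp$.

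Finally, set $\fastgen(\alpha,t):=y(t)/2$. It is generable on an open connected domain $\Gamma\times\Rp$ (with $\Gamma$ any open neighbourhood of the admissible $\alpha_a$'s, e.g.\ $(0,e)$) by Theorem~\ref{th:gpac_ext_ivp_stable} and Lemma~\ref{lemma:closure}. It is nondecreasing in $t$ by the argument above, positive because so is $y$, and satisfies $\fastgen(\alpha,t)\geqslant x_n$ for all $t\geqslant n$ because $y(n)\geqslant u_n/2\geqslant x_n/2$. Computability of $x\mapsto\alpha$ is the composition of the computable preprocessing $x\mapsto a$ and the computable map $a\mapsto\alpha_a$ provided by Lemma~\ref{lem:pre_fastgen}. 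The main difficulty is the calibration of $\phi,\psi$: because $u_n$ may grow arbitrarily fast, the multiplicative error accumulated at step $n+1$ is amplified by $g(\alpha_a,u_n)$, which itself is arbitrary, which forces the schedule to depend self-referentially on the state of the ODE as above.
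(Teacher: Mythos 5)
Your proposal follows the same skeleton as the paper's proof: preprocess $x$ into a fast-growing integer sequence $a$, invoke Lemma~\ref{lem:pre_fastgen} as the ``raw engine'' of growth, and then implement the self-accelerating recursion $u_{n+1}=u_n g(\alpha_a,u_n)$ with two phase-alternating $\pereach$ variables $y,z$, finally folding the single real parameter into the initial condition via Theorem~\ref{th:gpac_ext_ivp_stable}. That much is exactly what the paper does. Where you diverge is the calibration of $\phi$, and there you go wrong in a way that both over-complicates the construction and leaves a genuine gap.

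You reason that ``the multiplicative error accumulated at step $n+1$ is amplified by $g(\alpha_a,u_n)$, which forces the schedule to depend self-referentially on the state,'' and you therefore take $\phi(t)=\exp\bigl(t(1+y(t)^2+z(t)^2)\bigr)$. This diagnosis is incorrect. The error per half-period from $\pereach$ is \emph{additive}, of size at most $\tfrac12 e^{-\phi}+e^{-1}$, and because $g$ is nondecreasing, a small \emph{positive} slack in the state is harmless and a small \emph{negative} one is what must be controlled. The paper handles this with a fixed additive slack $\delta=4$ baked into the target (the $y$-target is $\delta+\tfrac1c z\,g(\alpha,z)$) and a \emph{constant} $\phi=2$: the induction invariant is $\min(y(n),z(n))\geqslant1+2\pi b_n$, and the constant-size additive drifts are absorbed by $\delta$ at each step, with no amplification by $g$. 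No state-dependent $\phi$ is needed, and none is used.

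Your choice of $\phi$ is not merely unnecessary; it opens a hole. Theorem~\ref{th:pereach} is stated for $\phi\in C^0(I,\Rp)$ given as a function of $t$, and its existence clause is what guarantees the ODE does not blow up. Once $\phi$ depends on $y$ and $z$, you no longer have a black-box existence guarantee: the crude a priori bound $|y'|\leqslant 6\phi$ during the active half-period becomes $|y'|\leqslant 6\exp\bigl(t(1+y^2+z^2)\bigr)$, and a differential inequality of the form $y'\leqslant C e^{y^2}$ is compatible with finite-time blow-up. The construction is likely still sound because $\reach$ saturates as $y$ approaches the (essentially frozen) target $G(z)$, but that argument has to be made explicitly and you do not make it. The paper sidesteps the whole issue by keeping $\phi$ constant. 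There is also a small arithmetic slip at the end: with $\fastgen(\alpha,t):=y(t)/2$ and $y(n)\geqslant u_n/2\geqslant x_n/2$ you only get $\fastgen(\alpha,n)\geqslant x_n/4$, not $\geqslant x_n$; either drop the division by $2$ or tighten the preprocessing to $2\pi b_n\geqslant 4x_n$.
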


\begin{proof}

Let $\delta=4$.
Apply Lemma~\ref{lem:pre_fastgen} to get $g$ and $c$. Let $a\in\N^N$ be an increasing sequence
such that $a_n\geqslant x_n$,
then there exists $\alpha_x\in\R$ such that for all $n\in\N$,
\begin{equation}\label{eq:fastgen:g}
g(\alpha_x,t)\geqslant ca_n
\end{equation}
for all $t\geqslant 2\pi b_n$ where $b_n=\prod_{k=0}^{n-1}a_k$. Consider the following
system of differential equations, for $\phi=2$,
\[
\left\{\begin{array}{@{}r@{\hspace{.25em}}l@{}}
y(0)&=\delta+2\pi\\z(0)&=\delta+2\pi
\end{array}\right.
,\qquad
\left\{\begin{array}{@{}r@{\hspace{.25em}}l@{}}
y'(t)&=\pereach\left(t,\phi,y(t),\delta+\tfrac{1}{c}z(t)g(\alpha_x,z(t))\right)\\
z'(t)&=\pereach\left(t+\tfrac{1}{2},\phi,z(t),1+y(t)\right)
\end{array}\right..
\]
Apply Theorem~\ref{th:pereach} to show that $y$ and $z$ exist over $\Rp$.
We will show the following result by induction on $n\in\N$:
\begin{equation}\label{eq:fastgen:induc}
\min(y(n),z(n))\geqslant\delta+2\pi b_n.
\end{equation}
The result is trivial for $n=0$ since $y(0)=z(0)=\delta+2\pi=\delta+2\pi b_0\geqslant 1+2\pi b_0$. Let $n\in\N$
and assume that \eqref{eq:fastgen:induc} holds for $n$. Apply Theorem~\ref{th:pereach} (item~\ref{th:pereach:constant})
to $z$ to get that for any $t\in[n,n+\tfrac{1}{2}]$,
\[|z(t)-z(n)|
    \leqslant\int_n^t\exp\left(-\phi\right)du
    \leqslant\int_n^{n+\tfrac{1}{2}}\exp\left(-\phi\right)du\leqslant\tfrac{1}{2}e^{-\phi}\leqslant1.
\]
In particular, it follows that for any $t\in[n,n+\tfrac{1}{2}]$,
\begin{align*}
    z(t)&\geqslant z(n)-1
        \geqslant 2\pi b_{n}&&\text{since }z(n)\geqslant1+2\pi b_{n}\\
    g(\alpha_x,z(t))&\geqslant c a_{n}&&\text{using \eqref{eq:fastgen:g}}\\
    z(t)g(\alpha_x,z(t))&\geqslant  2\pi b_nca_{n}&&\text{since }z(t)\geqslant2\pi b_{n}\\
        &=2\pi cb_{n+1}&&\text{since }b_{n+1}=b_na_n\\
    \delta+\tfrac{1}{c}z(t)g(\alpha_x,z(t))&\geqslant\delta+2\pi b_{n+1}.
\end{align*}
Note that $\phi\geqslant1$ then
apply Theorem~\ref{th:pereach} (item~\ref{th:pereach:lower_conv}) to $y$ using the above inequality to get that
\[y(n+\tfrac{1}{2})
    \geqslant \delta+2\pi b_{n+1}-e^{-1}
    \geqslant \delta-1+2\pi b_{n+1}
\]
and (item~\ref{th:pereach:lower_bound}) for any $t\in[n,n+\tfrac{1}{2}]$,
\begin{equation}\label{eq:fasten:y_ineq_1}
y(t)\geqslant\min(y(n),\delta+2\pi b_{n+1})
    \geqslant\min(1+2\pi b_n,\delta+2\pi b_{n+1})
    \geqslant1+2\pi b_n,
\end{equation}
and (item~\ref{th:pereach:constant}) for any $t\in[n+\tfrac{1}{2},n+1]$,
\begin{equation}\label{eq:fasten:y_ineq_2}
|y(t)-y(n+\tfrac{1}{2})|\leqslant\int_{n+\tfrac{1}{2}}^te^{-\phi} du\leqslant\tfrac{e^{-\phi}}{2}\leqslant 1.
\end{equation}
Thus $y(t)\geqslant y(n+\tfrac{1}{2})-e^{-1}\geqslant \delta-2-2\pi b_{n+1}$ for any $t\in[n+\tfrac{1}{2},n+1]$.
Note that $\phi\geqslant1$ and
apply Theorem~\ref{th:pereach} (item~\ref{th:pereach:lower_conv}) to $z$ using the above inequality to get that
\[z(n+1)
    \geqslant \min_{u\in[n+\tfrac{1}{2},n+1]}y(u)-e^{-1}
    \geqslant \delta-2+2\pi b_{n+1}-e^{-1}
    \geqslant \delta-3+2\pi b_{n+1}.
\]
And since $\delta-3\geqslant1$, we have shown that $y(n+1)$ and $z(n+1)$ are greater than $1+2\pi b_{n+1}$.
Furthermore, \eqref{eq:fasten:y_ineq_1} and \eqref{eq:fasten:y_ineq_2} prove that
for any $t\in[n,n+1]$,
\[y(t)\geqslant\min(1+2\pi b_n,\delta-2+2\pi b_{n+1})\geqslant1+2\pi b_n.\]
We can thus let $\fastgen(\alpha;t)=y(1+t)$ and get the result since $1+2\pi b_{n+1}\geqslant a_n$.
Finally, $(\alpha_x;t)\mapsto y(t)$ is uniformly-generable by Theorem~\ref{th:unif_gpac_ext_ivp_stable}
because $\pereach$ and $g$ are generable and the initial condition
is computable.

The computability of the map $x\mapsto\alpha$ follows from the computability of the map $a\mapsto\alpha$
(Lemma~\ref{lem:pre_fastgen}) and the map $x\mapsto a$. Note that the only condition which $a\in\N^\N$ has
to satisfy is $a_n\geqslant x_n$. Given a real number represented by its Cauchy sequence,
\emph{with a known rate of convergence}, it is trivial to compute an integer upper bound on this number.
\end{proof}

\section{Generating a sequence of dyadic rationals}
\label{sec:dyadic}

A major part of the proof requires to build a function to approximate arbitrary
numbers over intervals $[n,n+1]$. Ideally we would like to build a function that gives
$x_0$ over $[0,1]$, $x_1$ over $[1,2]$, etc. Before we get there, we solve a somewhat
simpler problem by making a few assumptions:
\begin{itemize}
\item we only try to approximate dyadic numbers, \emph{i.e.} numbers of the form $m2^{-p}$,
    and furthermore we only approximate  with error $2^{-p-3}$;
\item if a dyadic number has size $p$, meaning that it can be written as $m2^{-p}$
    but not $m'2^{-p+1}$ then it will take a time interval of $p$ units to approximate:
    $[k,k+p]$ instead of $[k,k+1]$;
\item the function will only approximate the dyadics over intervals $[k,k+\tfrac{1}{2}]$
    and not $[k,k+1]$.
\end{itemize}
This processus is illustrated in Figure~\ref{fig:dygen}: given a sequence $d_0,d_1,\ldots$
of dyadics, there is a corresponding sequence $a_0,a_1,\ldots$ of times such that
the function approximates $d_k$ over $[a_k,a_k+\tfrac{1}{2}]$ within error $2^{-p_k}$
where $p_k$ is the size of $d_k$. The theorem contains an explicit formula for
$a_k$ that depends on some absolute constant $\delta$.

Figure~\ref{fig:dygen} highlights a feature of $\dygen$: it is an almost piecewise constant
function. However we only control the values it takes over small intervals $[a_i,a_i+\tfrac{1}{2}]$,
and we have no idea what is the value the rest of the time (even if we know that it is
almost piecewise constant).
\bigskip


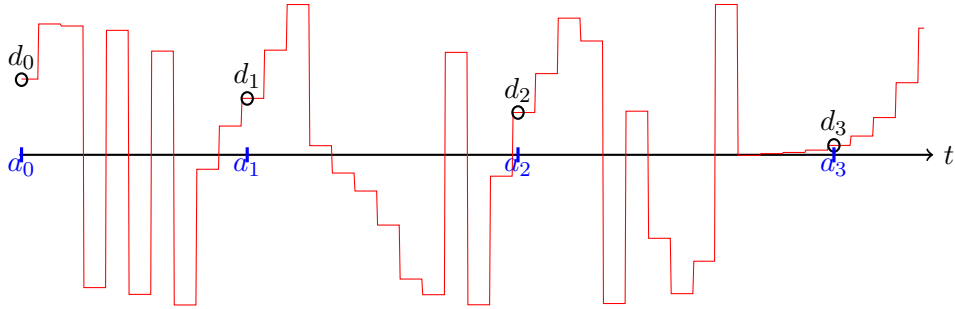
\begin{figure}[h]
\begin{center}
\begin{tikzpicture}[domain=0:40,samples=1000,xscale=0.3,yscale=2]
\draw[thick,->] (-0.1,0) -- (40.4,0) node[right] {$t$};
\draw[color=red] plot[id=dygen] function{sin(2*0.08404422889012441721945378958480432629585*pi*exp(log(2)*floor(x-1/4.+1/2.)))};
\foreach \idx/\x/\y in {0/0/0.5, 1/10/0.375, 2/22/0.28125, 3/36/0.0625}
{
    \draw[color=black,thick] (\x,\y) circle[x radius=0.25,y radius=0.25/6] node[above] {$d_\idx$};
    \draw[color=blue,very thick] (\x,-0.05) -- (\x,0.05) node[below=0.1] {$a_\idx$};
}
\end{tikzpicture}
\end{center}
\caption{Graph of $\dygentext$ for $d_0=2^{-1}$, $d_1=2^{-3}+2^{-1}$, $d_2=2^{-5}+2^{-2}$ and $d_3=2^{-4}$ (other values ignored)
assuming that $\delta=9$. We get that $a_0=0$, $a_1=10$, $a_2=22$, $a_3=36$.}
\label{fig:dygen}
\end{figure}

Let $\D_p=\set{m2^{-p}:m\in\set{0,1,\ldots,2^p-1}}$
and $\D=\bigcup_{n\in\N}\D_p$ denote the set of dyadic rationals in $[0,1)$. For any $q\in\D$,
we define its \emph{size} by $\dysize(q)=\min\set{p\in\N:q\in\D_p}$.

\begin{thm}\label{th:dygen}
There exists $\delta\in\Nps$, $\Gamma\subseteq\R^2$ and a uniformly-generable function
$\dygen:\Gamma\times\Rp\to\R$ such that for any dyadic sequence
$q\in\D^\N$, there exists $(\alpha,\beta)\in\Gamma$ such that for any  $n\in\N$,
\[|\dygen(\alpha,\beta;t)-q_n|\leqslant 2^{-\dysize(q_n)-3}\qquad\text{for any }t\in [a_{n},a_n+\tfrac{1}{2}]\]
where $a_n=\sum_{k=0}^{n-1}(\dysize(q_k)+\delta)$.
Furthermore, $|\dygen(\alpha,\beta;t)|\leqslant 1$ for all $\alpha,\beta$ and $t$.
In addition, the map $q\mapsto(\alpha,\beta)$ is $(\nu_\Q^\omega,[\rho,\rho])$-computable.
\end{thm}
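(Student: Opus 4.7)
The plan is to build $\dygentext$ by encoding the whole sequence $q$ into a single real number $\alpha$, and then to design an ODE system that alternates between a \emph{shift} phase (which exposes the next dyadic in a register) and a \emph{readout} phase (which copies that register into the output). The parameter $\beta$ will drive a fast-growing function, via Theorem~\ref{th:fastgen}, that supplies the unbounded precision needed for rounding and $\pereach$ to behave correctly as $n\to\infty$.

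For the encoding, let $\sigma_n=\sum_{k=0}^{n-1}\dysize(q_k)$ be the cumulative bit-length of the first $n$ dyadics, and set
\[
\alpha=\sum_{n=0}^{\infty}q_n\,2^{-\sigma_n}.
\]
Since $\dysize(q_n)\geqslant1$ whenever $q_n\ne0$, the tail $\sum_{k>n}q_k2^{-\sigma_k}$ is bounded by $2^{-\sigma_{n+1}+1}=2^{-\sigma_n-\dysize(q_n)+1}$, so after shifting $\sigma_n$ binary positions, the fractional part of $2^{\sigma_n}\alpha$ differs from $q_n$ by at most $2^{-\dysize(q_n)+1}$. Choosing $\delta$ large enough (so we have room for rounding), this error can be driven below $2^{-\dysize(q_n)-3}$ by one application of $\rnd$ at scale $2^{\dysize(q_n)+3}$. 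Since $a_n=\sigma_n+n\delta$, we will arrange that by time $t=a_n$ the register has been shifted exactly $\sigma_n$ times, which is feasible because the preparation window $[a_n+\tfrac12,a_{n+1}]$ has length $\dysize(q_n)+\delta-\tfrac12\geqslant \dysize(q_n)$, i.e.\ enough time for $\dysize(q_n)$ unit-speed shifts.

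Concretely, I would introduce variables $s$ (the shifting register), $y$ (the output), and a phase/clock variable, all driven by $\pereach$ from Theorem~\ref{th:pereach}. In each unit interval $[k,k+1]$, during $[k,k+\tfrac12]$ the variable $s$ is kept essentially fixed while $y$ is $\pereach$'d toward a generable rounding $\rnd(s,\mu(k),\lambda)$ that truncates $s$ to its top $\dysize(q_n)+3$ bits; during $[k+\tfrac12,k+1]$, $y$ is held and $s$ is $\pereach$'d toward $2s-\lfloor 2s\rceil_{\text{approx}}$, the generable ``multiply by $2$ and take fractional part'' obtained from $\rnd$. Arithmetic on the cumulative-length counter can be done by a similar generable scheme to know whether we are in ``readout'' (within the first $\tfrac12$ after $a_n$) or ``shift'' mode; the decision is controlled by a further $\pereach$ that copies the current bit-index mod $(\dysize(q_n)+\delta)$. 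Throughout, the precision parameter $\mu$ of $\rnd$ and the sharpness parameters $\phi,\psi$ of $\pereach$ are set to $\fastgen(\beta,t)$, where $\beta$ is chosen via Theorem~\ref{th:fastgen} for a sequence $x_n$ growing fast enough (e.g.\ $x_n=\dysize(q_n)+n$) so that all accumulated approximation errors over $[0,a_n+\tfrac12]$ stay $\leqslant 2^{-\dysize(q_n)-3}$.

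The main obstacle is that $\dysize(q_n)$ is variable, so the system must ``know'' when to switch from shift to readout. I plan to solve this by augmenting the shifting register with the self-delimiting information $\dysize(q_n)$ — for instance, by encoding each $q_n$ together with a unary prefix of length $\dysize(q_n)$ into $\alpha$, and running a second generable counter that detects the end of this prefix to trigger the readout phase. Error control then reduces to: (i) the tail-truncation error in the encoding (bounded by $2^{-\dysize(q_n)-3}$ by construction of $\delta$), (ii) the $\rnd$ error (bounded by $e^{-\fastgen(\beta,t)}$), and (iii) the $\pereach$ convergence errors on $[k,k+\tfrac12]$ (again exponentially small in $\fastgen(\beta,t)$). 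The bound $|\dygen|\leqslant1$ follows because the readout is clipped to $[0,1]$ by applying $\rnd(\cdot,\mu,\lambda)$ at the right scale, and generability plus uniqueness follow by combining Lemma~\ref{lemma:closure} and Theorem~\ref{th:gpac_ext_ivp_stable}. Finally, the $(\nu_\Q^\omega,[\rho,\rho])$-computability of $q\mapsto(\alpha,\beta)$ is immediate: $\alpha$ is a computable infinite sum of dyadics with rapidly decreasing positive terms, and $\beta$ comes from the computable map in Theorem~\ref{th:fastgen}.
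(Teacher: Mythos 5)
Your construction is genuinely different from the paper's, and I think it has real gaps. The paper's proof uses the analytic identity $\sin(2\pi\alpha 2^t)$ with $\alpha_q=\sum_k q_k'2^{-a_k}$ and $q_k'\in\D_{\dysize(q_k)+\delta}$, so that at $t=a_n$ the earlier terms become integers and vanish under the $2\pi$-periodicity of $\sin$; the only rounding needed is of the \emph{time variable}, via $\rnd(t-\tfrac14,\ldots)$, and since the $a_n$ are integers and $t-\tfrac14$ stays in $[a_n-\tfrac14,a_n+\tfrac14]$, that rounding always lands squarely inside the safe zone of Lemma~\ref{lem:rnd}. Your proposal instead tries to run a Turing-machine-style shift register, which is a classical technique but introduces difficulties that the paper's scheme sidesteps entirely.

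The first real problem is the ``multiply by $2$ and take fractional part'' step. Extracting $\lfloor 2s\rfloor$ with $\rnd$ is only reliable when $2s$ is bounded away from half-integers, equivalently when the register $s$ is bounded away from $\tfrac12$ (and from $0$ and $1$ modulo $1$ for your subtraction to stay in range). But your register is supposed to equal $\{2^k\alpha\}$ for the concatenated encoding $\alpha=\sum_n q_n 2^{-\sigma_n}$, and nothing prevents the bit string starting at position $k$ from being arbitrarily close to $1000\ldots$ or $0111\ldots$; e.g.\ dyadics ending in long runs of zeros followed by a $1$ from the next block, or $q_n=\tfrac12$ for all $n$, which makes $\alpha=1$. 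You do not show that the register avoids the dead zone, and for a general $q\in\D^\N$ it will not. Moreover, each shift roughly doubles the register's accumulated error, so the analysis really must be carried out, not just invoked via $\fastgentext$.

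The second problem is the self-delimiting prefix. You note that the preparation window $[a_n+\tfrac12,a_{n+1}]$ has length $\dysize(q_n)+\delta-\tfrac12$, barely enough for $\dysize(q_n)$ unit-speed shifts plus a constant overhead. Prepending a unary length-prefix of length $\dysize(q_n)$ requires about $2\dysize(q_n)$ shifts, and since $\dysize(q_n)$ is unbounded while $\delta$ is a fixed constant, this exceeds the budget imposed by the theorem's formula $a_n=\sum_{k<n}(\dysize(q_k)+\delta)$. You would either need a different (sublinear) delimiting scheme, or a different $a_n$, but the latter is not available here because downstream lemmas (notably Lemma~\ref{lem:univ_class_1}) use this exact formula for $a_n$.

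Finally, the claim that $|\dygen|\leqslant 1$ ``follows because the readout is clipped to $[0,1]$ by applying $\rnd$'' is not correct: $\rnd$ approximately rounds, it does not clip, and the rounded value of the register could lie outside $[0,1]$ once error is accounted for. In the paper the bound is for free because $\dygentext$ is literally a sine. In sum, your approach is structurally possible in principle, but as sketched it has two substantive gaps (dead-zone avoidance for the fractional-part operation, and delimiting within the fixed time budget) that are not minor fill-ins, whereas the paper's sine-based encoding avoids both issues by design.
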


\begin{lem}\label{lem:dyadic_sine}
For any $q\in\D_p$, there exists $q'\in\D_{p+3}$ such that $|\sin(2q'\pi)-q|\leqslant 2^{-p}$
and $|q'|\leqslant 1$.
Furthermore, $x\mapsto\sin(2\pi x)$ is $8$-Lipschitz.
In addition, the map $q\mapsto q'$ is ($\nu_\Q,\nu_\Q)$-computable.
\end{lem}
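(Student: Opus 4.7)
The plan is to choose $q'$ as a good dyadic approximation of $\theta := \arcsin(q)/(2\pi)$, so that $\sin(2\pi\theta) = q$ exactly and the error $|\sin(2\pi q') - q|$ is controlled by the rounding error $|q' - \theta|$ multiplied by the Lipschitz constant of $x \mapsto \sin(2\pi x)$.

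The Lipschitz bound is immediate: the derivative of $x \mapsto \sin(2\pi x)$ equals $2\pi\cos(2\pi x)$, which has absolute value at most $2\pi < 8$. For the existence of $q'$, I would write $q = m\,2^{-p}$ with $0 \leqslant m \leqslant 2^p - 1$, so that $q \in [0, 1 - 2^{-p}]$ and $\arcsin(q)$ is well-defined; set $\theta = \arcsin(q)/(2\pi) \in [0, 1/4)$, so that $\sin(2\pi\theta) = q$. The elements of $\D_{p+3}$ form an equally-spaced subset of $[0,1)$ with spacing $2^{-p-3}$, so rounding $\theta$ to the nearest such element yields $q' \in \D_{p+3} \cap [0, 1/4]$ with $|q' - \theta| \leqslant 2^{-p-4}$. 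Applying the Lipschitz bound gives $|\sin(2\pi q') - q| = |\sin(2\pi q') - \sin(2\pi\theta)| \leqslant 8 \cdot 2^{-p-4} = 2^{-p-1} \leqslant 2^{-p}$, and plainly $|q'| \leqslant 1/4 \leqslant 1$.

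For the computability assertion, I would invoke the fact that $\arcsin$ is computable on any compact subset of $(-1,1)$ and that $\pi$ is a computable real. Given a $\nu_\Q$-name of $q$ (which directly exposes an integer $p$ and an integer $m$ with $q = m\,2^{-p}$, hence $q \leqslant 1 - 2^{-p}$ bounded away from the singularity), one computes a rational approximation $\tilde\theta$ of $\theta$ with $|\tilde\theta - \theta| \leqslant 2^{-p-5}$; a purely rational rounding of $\tilde\theta$ to the nearest multiple of $2^{-p-3}$ then yields $q' \in \D_{p+3}$, and the estimate $|q' - \theta| \leqslant 2^{-p-4} + 2^{-p-5} < 2^{-p-3}$ combined with the Lipschitz bound yields $|\sin(2\pi q') - q| \leqslant 2^{-p}$. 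The whole argument is elementary; the only mild subtlety to watch is that $q$ must stay strictly bounded away from $1$ so that $\arcsin(q)$ is well-controlled, but this follows automatically from $q \in \D_p$.
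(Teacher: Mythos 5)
Your proposal is correct and follows essentially the same approach as the paper: pick a preimage of $q$ under $\sin(2\pi\cdot)$ restricted to $[0,\tfrac14]$ (the paper asserts its existence by surjectivity of an increasing map, you name it via $\arcsin$), round it to a nearby element of $\D_{p+3}$ (the paper uses a floor, you use nearest-rounding), and conclude with the $8$-Lipschitz bound. The only noticeable divergence is in the computability argument, where the paper finds a rational $x'$ by bisection on the increasing computable function $\sin(2\pi\cdot)$ whereas you invoke computability of $\arcsin$; both routes work and both correctly handle the subtlety (emphasized by the paper) that the rounding must be applied to a \emph{rational} approximation rather than to a real, since rounding is not $(\rho,\rho)$-computable.
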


\begin{proof}
Let $f(x)=\sin(2\pi x)$ for $x\in[0,\tfrac{1}{4}]$. Clearly $f$ is surjective
from $[0,\tfrac{1}{4}]$ to $[0,1]$ thus there exists $x'\in[0,\tfrac{1}{4}]$
such that $f(x')=q$. Furthermore, since $f'(x)=2\pi\cos(2\pi x)$
and $2\pi\leqslant 8$, $f$ is $8$-Lipschitz. Let $q'=\lfloor 2^{p+3}x'\rfloor 2^{-p-3}$,
then $q'\in\D_{p+3}$ and $|q'-x'|\leqslant 2^{-p-3}$ by construction. Clearly $|q'|\leqslant 1$,
and furthermore,
\[|f(q')-q|
    \leqslant|f(q')-f(x')|+|f(x')-q|
    \leqslant 8|q'-x'|+0
    \leqslant 8\cdot2^{-p-3}
    \leqslant 2^{-p}.\]
Note that $f$ is not only surjective from $[0,\tfrac{1}{4}]$ to $[0,1]$ but also increasing and $8$-Lipschitz.
Furthermore, $f$ is $(\rho,\rho)$-computable thus a simple dichotomy is enough to find a suitable
\textbf{rational} $x'$. To conclude,
use the fact that the map $x'\mapsto q'$ is $(\nu_\Q,\nu_\Q)$-computable. Note that it is crucial
that $x'$ is rational because the floor function is not $(\rho,\rho)$-computable.
\end{proof}

\begin{proof}[Proof of Theorem~\ref{th:dygen}]
Let $\delta=9$.
Consider the function
\[f(\alpha,t)=\sin\left(2\alpha\pi 2^t\right)\]
defined for any $\alpha,t\in\R$. Then $f$ is generable because $\sin$ and $\exp$
are generable. For all $n\in\N$, note that
$q_n\in\D_{\dysize(q_n)}\subseteq \D_{\dysize(q_n)+\delta-3}$ and apply
Lemma~\ref{lem:dyadic_sine} to $q_n$ to get $q_n'\in\D_{\dysize(q_n)+\delta}$ such that
\begin{equation}\label{eq:dygen:sin}
|\sin(2q_n'\pi)-q_n|\leqslant 2^{-\dysize(q_n)-\delta+3}.
\end{equation}
Now define
\begin{equation}\label{eq:dyagen:alpha}
\alpha_q=\sum_{k=0}^\infty q_k'2^{-a_k}
    \qquad\text{where }a_k=\sum_{\ell=0}^{k-1}(\dysize(q_\ell)+\delta).
\end{equation}
It is not hard to see that $\alpha_q$ is well-defined (\emph{i.e.} the sum converges).
Let $n\in\N$, then
\begin{align*}
f(\alpha_q,a_n)
    &=\sin(2\pi\alpha_q 2^{a_n})\\
    &=\sin\left(2\pi\sum_{k=0}^\infty q_k'2^{-a_k+a_n}\right)\\
    &=\sin\left(2\pi\sum_{k=0}^{n-1}q_k'2^{-a_k+a_n}
        +2\pi q_n'
        +2\pi\sum_{k=n+1}^\infty q_k'2^{-a_k+a_n}\right).
\end{align*}
But for any $k\leqslant n-1$,
\[a_n-a_k=\sum_{\ell=k}^{n-1}(\dysize(q_\ell)+\delta)\geqslant\dysize(q_k)+\delta\]
and since $q_k'\in\D_{\dysize(q_k)+\delta}$ and $a_n-a_k\in\N$, it follows that $q_k'2^{-a_k+a_n}\in\N$.
Consequently,
\[f(\alpha_q,a_n)=\sin\left(2\pi q_n'+2\pi\sum_{k=n+1}^\infty q_k'2^{-a_k+a_n}\right)
    =\sin\left(2\pi(q_n'+u)\right)\]
where $u=\sum_{k=n+1}^\infty q_k'2^{-a_k+a_n}$. But for any $k\geqslant n+1$,
\[a_k-a_n
    =\sum_{\ell=n}^{k-1}(\dysize(q_\ell)+\delta)
    \geqslant\dysize(q_n)+\sum_{\ell=n}^{k-1}\delta
    =\dysize(q_n)+\delta(k-n).
\]
Consequently,
\[|u|
    \leqslant\sum_{k=n+1}^\infty |q_k'|2^{-a_k+a_n}
    \leqslant\sum_{k=n+1}^\infty 2^{-\dysize(q_n)-\delta(k-n)}
    \leqslant2^{-\dysize(q_n)}\sum_{k=1}^\infty 2^{-\delta k}
    \leqslant 2^{-\dysize(q_n)-\delta+1}.
\]
Since $x\mapsto\sin(2\pi x)$ is $8$-Lipschitz, it follows that
\begin{align}
|f(\alpha_q,a_n)-q_n|
    &=|\sin\left(2\pi(q_n'+u)\right)-q_n|\nonumber\\
    &\leqslant|\sin\left(2\pi(q_n'+u)\right)-\sin(2\pi q_n')|+|\sin(2q_n'\pi)-q_n|\nonumber\\
    &\leqslant8|u|+2^{-\dysize(q_n)-\delta+3}\nonumber&&\text{using \eqref{eq:dygen:sin}}\\
    &\leqslant8\cdot 2^{-\dysize(q_n)-\delta+1}+2^{-\dysize(q_n)-\delta+3}\nonumber\\
    &\leqslant2^{-\dysize(q_n)-\delta+5}\label{eq:dygen:f}.
\end{align}
Recall that $\rnd$ is the generable rounding function from Lemma~\ref{lem:rnd},
and $\fastgentext$ the fast growing function from Theorem~\ref{th:fastgen}.
Let $\alpha,\beta,t\in\R$, if $\fastgen(\beta;t)$ exists then let
\[\dygen(\alpha,\beta;t)=f(\alpha,r(\beta;t))\]
where
\[r(\beta;t)=\rnd\left(t-\tfrac{1}{4},\fastgen(\beta;t)\ln2,4\right).\]
Note that $\dygentext$ is uniformly-generable because $f,\rnd$ and $\fastgentext$ are uniformly-generable.
Apply\footnote{Technically speaking, we apply it to the sequence $x_n=a_k$ if
    $n=a_k+\dysize(q_n)+\delta$, and $x_n=0$ otherwise.}
Theorem~\ref{th:fastgen} to get $\beta_q\in\R$ such that for any $n\in\N$ and $t\in\Rp$,
\[\fastgen(\beta_q;t)\geqslant a_n+\dysize(q_n)+\delta\qquad\text{if }t\in[a_n,a_n+1).\]
Let $n\in\N$ and $t\in[a_n,a_n+\tfrac{1}{2}]$, then
$t-\tfrac{1}{4}\in\left[a_n-\tfrac{1}{2}+\tfrac{1}{\lambda},a_n+\tfrac{1}{2}-\tfrac{1}{\lambda}\right]$
for $\lambda=4$. Thus we can apply Lemma~\ref{lem:rnd} and get that
\begin{equation}\label{eq:dygen:r}
|r(\beta_q,t)-a_n|\leqslant e^{-\fastgen(\beta_q;t)\ln2}=2^{-\fastgen(\beta_q;t)}
    \leqslant 2^{-a_n-\dysize(q_n)-\delta}\leqslant1.
\end{equation}
Observe that
\[\left|\tfrac{\partial f}{\partial t}(\alpha_q,t)\right|
    =2\pi|\alpha_q|2^t|\cos(2\alpha_q\pi 2^t)|
    \leqslant 2\pi2^t\leqslant 2^{t+3}.\]
Thus for any $t,t'\in\R$,
\[|f(\alpha_q,t)-f(\alpha_q,t')|\leqslant 2^{3+\max(t,t')}|t-t'|.\]
It follows that for any $n\in\N$ and $t\in[a_n,a_n+\tfrac{1}{2}]$,
\begin{align*}
|\dygen(\alpha_q,\beta_q;t)-q_n|
    &=|f(\alpha_q,r(\beta_q;t))-q_n|\\
    &\leqslant |f(\alpha_q,r(\beta_q;t))-f(\alpha_q,a_n)|+|f(\alpha_q,a_n)-q_n|\\
    &\leqslant 2^{3+\max(a_n,r(\beta_q;t))}|r(\beta_q;t)-a_n|+2^{-\dysize(q_n)-3}&&\text{using \eqref{eq:dygen:f}}\\
    &\leqslant 2^{3+a_n+1}2^{-\dysize(q_n)-a_n-\delta}+2^{-\dysize(q_n)-\delta+5}&&\text{using \eqref{eq:dygen:r}}\\
    &\leqslant 2^{-\dysize(q_n)-\delta+4}+2^{-\dysize(q_n)-\delta+5}\\
    &\leqslant 2^{-\dysize(q_n)-\delta+6}\\
    &\leqslant 2^{-\dysize(q_n)-3}&&\hspace{-2.5em}\text{since }\delta-6\geqslant3.
\end{align*}
To see that the map $q\mapsto(\alpha_q,\beta_q)$ is computable, first note that the map $q_n\mapsto q_n'$
is computable (Lemma~\ref{lem:dyadic_sine}), thus the map $q\mapsto q'$ is
$(\nu_\Q^\omega,\nu_\Q^\omega)$-computable. It is clear from \eqref{eq:dyagen:alpha} that $q'\mapsto a$
is also computable. Using a similar argument as above, one can easily see that the partial sums
(of the infinite sum) defining $\alpha_q$ in \eqref{eq:dyagen:alpha} form a Cauchy sequence with
convergence rate $k\mapsto 2^{-k}$ because $a_k\geqslant k\delta\geqslant k$. Finally, $q\mapsto \beta_q$
is computable by Theorem~\ref{th:fastgen}.
\end{proof}

\section{Generating a sequences of bits}
\label{sec:bits}

We saw in the previous section how to generate a dyadic generator. Unfortunately,
we saw that it generates dyadic $d_n$ at times $a_n$, whereas we would like to get $d_n$
at time $n$ for our approximation. Our approach is to build a signal generator that
will be high exactly at times $a_n$. Each time the signal will be high, the system will copy
the value of the dyadic generator to a variable and wait until the next signal.
Since the signal is binary, we only need to generate a sequence of bits.
Note that this theorem has a different flavour from the dyadic generator: it generates
a more restrictive set of values (bits) but does so much better because we have
control over the timing and we can approximate the bits with arbitrary precision.

Figure~\ref{fig:bitgen} shows what $\bitgen$ looks like: it is an almost piecewise constant
function such that the value in the interval $[n,n+\tfrac{1}{2}]$ is almost the $n^{th}$
digit of $\alpha$.
\bigskip

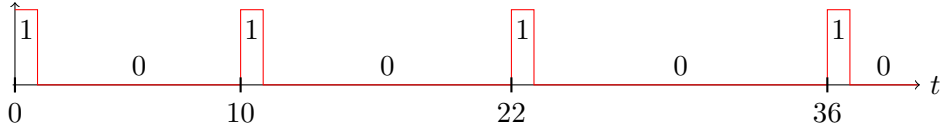
\begin{figure}[h]
    \begin{center}
    \begin{tikzpicture}[xscale=0.3,yscale=1]
        \draw[white] (-1,0) -- (11.2,0);
        \draw[->] (-0.1,0) -- (40.1,0) node[right] {$t$};
        \draw[->] (0,-0.1) -- (0,1.1);
        \draw[red] (0,1) -- ++(1,0) node[midway,below,black] {1}
            -- ++(0,-1) --++(9,0) node[midway,above,black] {0}
            -- ++(0,1) -- ++(1,0) node[midway,below,black] {1}
            -- ++(0,-1) --++(11,0) node[midway,above,black] {0}
            -- ++(0,1) -- ++(1,0) node[midway,below,black] {1}
            -- ++(0,-1) --++(13,0) node[midway,above,black] {0}
            -- ++(0,1) -- ++(1,0) node[midway,below,black] {1}
            -- ++(0,-1) --++(3,0) node[midway,above,black] {0};
        \foreach \x in {0,10,22,36} {
            \draw[thick] (\x,0.1) -- (\x,-0.1) node[below] {\x};
        }
    \end{tikzpicture}
    \end{center}
    \caption{(Ideal) graph of $\bitgen$ for $b\in\set{0,1}^\N$ where $b_0=b_{10}=b_{22}=b_{36}=1$ and all the
        other bits are $0$. \label{fig:bitgen}}
\end{figure}

\begin{rem}Although it is possible to define $\bitgentext$ using $\dygentext$, it
does not, in fact, give a shorter proof but definitely gives a more complicated
function.
\end{rem}

\begin{thm}\label{th:bitgen}
There exists $\Gamma\subseteq\R$ and a generable function $\bitgen:\Gamma\times\Rp^2\to\R$ such that for any bit sequence
$b\in\set{0,1}^\N$, there exists $\alpha_b\in\Gamma$ such that for any $\mu\in\Rp$,
$n\in\N$ and $t\in[n,n+\tfrac{1}{2}]$,
\[|\bitgen(\alpha_b,\mu,t)-b_n|\leqslant e^{-\mu}.\]
Furthermore, $|\bitgen(\alpha,\mu,t)|\leqslant 1$ for all $\alpha,\mu$ and $t$.
Finally, the map $b\mapsto\alpha_b$ is $(\nu_\N^\omega,\rho)$-computable.
\end{thm}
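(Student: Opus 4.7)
The plan is to encode the bit sequence $b$ into a single real number $\alpha_b$, extract the $n$-th bit by evaluating a trigonometric probe at a rounded time, and saturate the output with $\tanh$ to enforce $|\bitgen|\leqslant 1$. Since each bit has a full unit interval budget and the alphabet is $\{0,1\}$ (not all dyadics), the construction is more direct than that of $\dygentext$; in particular $\fastgentext$ is not needed, which matches the remark preceding the statement.

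Set $\alpha_b := \sum_{n=0}^\infty b_n 8^{-n-1}\in[0,\tfrac{1}{7}]$ and $F(\alpha,x):=\sin^2(4\pi\cdot 8^x\alpha)$, which is generable as a composition of $\sin$ and $\exp$. A direct computation gives $8^n\alpha_b = \mathrm{integer} + b_n/8 + \tau_n$ with $\tau_n = \sum_{j\geqslant 1}b_{n+j}8^{-j-1}\in[0,\tfrac{1}{56}]$, so by $1$-periodicity of $\sin^2(\pi\cdot)$,
\[F(\alpha_b,n)\in[0,\sin^2(\tfrac{\pi}{14})]\subseteq[0,\tfrac{1}{16}]\text{ if }b_n=0,\qquad F(\alpha_b,n)\in[\cos^2(\tfrac{\pi}{14}),1]\subseteq[\tfrac{15}{16},1]\text{ if }b_n=1.\]
The map $b\mapsto\alpha_b$ is $(\nu_\N^\omega,\rho)$-computable since the $n$-th partial sum approximates $\alpha_b$ within $8^{-n}/7$.

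To evaluate $F(\alpha_b,\cdot)$ at an argument close to $n$ when $t\in[n,n+\tfrac{1}{2}]$, set $r(t):=\rnd(t-\tfrac{1}{4},(t+1)\ln 8+C,4)$ for a suitable absolute constant $C$. Since $t-\tfrac{1}{4}\in[n-\tfrac{1}{4},n+\tfrac{1}{4}]$, Lemma~\ref{lem:rnd} yields $|r(t)-n|\leqslant e^{-(t+1)\ln 8-C}=8^{-(t+1)}e^{-C}$; combined with the elementary derivative bound $|\partial_x F(\alpha_b,x)|\leqslant 4\pi|\alpha_b|8^x\ln 8\leqslant(4\pi\ln 8/7)\,8^{n+1}$ for $x$ within distance $1$ of $n$, this gives $|F(\alpha_b,r(t))-F(\alpha_b,n)|\leqslant\tfrac{1}{4}$ once $C$ is large enough (a routine calculation allows $C=3$). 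Hence $F(\alpha_b,r(t))-\tfrac{1}{2}$ has magnitude at least $\tfrac{3}{16}$ with sign $-$ for $b_n=0$ and $+$ for $b_n=1$. Now define
\[\bitgen(\alpha,\mu,t):=\tfrac{1}{2}\bigl(1+\tanh\bigl((8\mu+8)(F(\alpha,r(t))-\tfrac{1}{2})\bigr)\bigr).\]
This is generable (composition of generable functions) and $\bitgen\in(0,1)$ trivially since $\tanh\in(-1,1)$, so $|\bitgen|\leqslant 1$ holds for all $\alpha,\mu,t$ without restriction on $\alpha$. For $t\in[n,n+\tfrac{1}{2}]$ with $b_n=0$, the previous step yields $(8\mu+8)(F-\tfrac{1}{2})\leqslant-\tfrac{3}{2}(\mu+1)$, so Lemma~\ref{lem:tanh} gives $\bitgen\leqslant\tfrac{1}{2}e^{-3(\mu+1)/2}\leqslant e^{-\mu}$; the case $b_n=1$ is symmetric. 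The only delicate point is pinning down the absolute constant $C$ from the explicit derivative bound above, which is a routine computation and does not depend on $b$ or $\mu$.
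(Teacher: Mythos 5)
Your proof is correct and follows essentially the same approach as the paper's: encode the bit sequence in the base-$B$ digits of a real $\alpha_b$, use a trigonometric probe $x\mapsto\sin(\text{something}\cdot B^x\alpha)$ evaluated at $\rnd(t-\tfrac{1}{4},\cdot,4)$ to separate the two bit values by a gap around $\tfrac12$, and finish by saturating with $\tanh$ scaled by $\mu$ to achieve the $e^{-\mu}$ bound. The only differences are cosmetic: you use base $8$ with $\sin^2$ and the encoding $\sum b_n 8^{-n-1}$, whereas the paper uses base $4$ with a phase-shifted $\sin(2\pi\alpha 4^t+\tfrac{4\pi}{3})$ and $\sum 2b_k4^{-k-1}$; both yield the same separation by more than $\tfrac12$, the same rounding-error analysis, and the same $\tanh$ endgame.
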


\begin{proof}

Consider the function
\[f(\alpha,t)=\sin\left(2\pi\alpha 4^t+\tfrac{4\pi}{3}\right)\]
defined for any $\alpha,t\in\R$. Then $f$ is generable because $\sin$ and $\exp$
are generable. For any $b\in\set{0,1}^\N$, let
\begin{equation}\label{eq:bitgen:def_alpha}
\alpha_b=\sum_{k=0}^\infty 2b_k4^{-k-1}.
\end{equation}
Let $b\in\set{0,1}^\N$ and $n\in\N$, observe that
\begin{align*}
f(\alpha_b,n)
    &=\sin\left(2\pi\sum_{k=0}^\infty 2b_k4^{-k-1}4^n+\tfrac{4\pi}{3}\right)\\
    &=\sin\left(2\pi\underbrace{\sum_{k=0}^{n-1} 2b_k4^{n-k-1}}_{\in\N}+2\pi2b_n4^{-1}+
        2\pi\sum_{k=n+1}^\infty 2b_k4^{n-k-1}+\tfrac{4\pi}{3}\right)\\
    &=\sin\left(\pi b_n+\tfrac{4\pi}{3}+\delta\right)
\end{align*}
where
\[\delta
    =2\pi\sum_{k=n+1}^\infty 2b_k4^{n-k-1}
    =4\pi\sum_{k=0}^\infty b_k4^{-k-2}
    \leqslant 4\pi\sum_{k=0}^\infty 4^{-k-2}
    =4\pi\tfrac{4^{-2}}{3}=\frac{\pi}{3}.
\]
It follows that if $b_n=0$ then $\pi b_n+\tfrac{4\pi}{3}+\varepsilon\in\left[\tfrac{4\pi}{3},\tfrac{5\pi}{3}\right]$
and if $b_n=1$ then $\pi b_n+\tfrac{4\pi}{3}+\varepsilon\in\left[\tfrac{7\pi}{3},\tfrac{8\pi}{3}\right]$.
Thus
\begin{equation}\label{eq:bitgen:f}
f(\alpha_b,n)\in\left[-1,-\tfrac{\sqrt{3}}{2}\right]\text{ if }b_n=0,
\qquad\qquad f(\alpha_b,n)\in\left[\tfrac{\sqrt{3}}{2},1\right]\text{ if }b_n=1.
\end{equation}
Furthermore,
\begin{equation}\label{eq:bitgen:alpha}
|\alpha_b|
    =\sum_{k=0}^\infty 2b_k4^{-k-1}
    \leqslant\tfrac{2}{4}\sum_{k=0}^\infty 4^{-k}
    \leqslant\tfrac{2}{3}.
\end{equation}
Let $\varepsilon\in\left[-\tfrac{1}{2},\tfrac{1}{2}\right]$, then
for any $t\in\R$,
\begin{align}
|f(\alpha_b,t+\varepsilon)-f(\alpha_b,t)|
    &=\left|\sin\left(2\pi\alpha_b 4^{t+\varepsilon}+\tfrac{4\pi}{3}\right)
            -\sin\left(2\pi\alpha_b 4^t+\tfrac{4\pi}{3}\right)\right|\nonumber\\
    &=\left|2\cos(2\pi\alpha_b(4^t+4^{t+\varepsilon})+\tfrac{8\pi}{3})
        \sin\left(2\pi\alpha_b(4^{t+\varepsilon}-4^t)\right)\right|\nonumber\\
    &\leqslant2|\sin(2\pi\alpha_b(4^{t+\varepsilon}-4^t))|\nonumber\\
    &\leqslant4\pi|\alpha_b||4^{t+\varepsilon}-4^t|\nonumber\\
    &\leqslant4\pi\frac{2}{3}4^t|4^\varepsilon-1|&&\text{using \eqref{eq:bitgen:alpha}}\nonumber\\
    &\leqslant\frac{8\pi}{3}4^t2\varepsilon&&\hspace{-3.8em}\text{using that }|\varepsilon|\leqslant\tfrac{1}{2}\nonumber\\
    &\leqslant 4^tB\varepsilon\label{eq:bitgen:mod_cont_f}
\end{align}
for some constant $B>0$.
Recall that $\rnd$ is the generable rounding function from Lemma~\ref{lem:rnd}.
Let $\alpha,t\in\R$, $\mu\in\Rp$ and define
\[g(\alpha,t)=f(\alpha,r(t))
\quad\text{where}\quad
r(t)=\rnd(t-\tfrac{1}{4},t\ln4+\ln B,4).\]
Note again that $g$ is generable because $f$ and $\rnd$ are generable.
Let $n\in\N$ and $t\in[n,n+\tfrac{1}{2}]$, then
$t-\tfrac{1}{4}\in\left[n-\tfrac{1}{4},n+\tfrac{3}{4}\right]
=\left[n-\tfrac{1}{4}+\tfrac{1}{\lambda},n+\tfrac{1}{2}-\tfrac{1}{\lambda}\right]$
for $\lambda=4$. Thus we can apply Lemma~\ref{lem:rnd} and get that
\[|r(t)-n|\leqslant e^{-(n+1)\ln4+\ln B}=\tfrac{4^{-n-1}}{B}.\]
It follows using \eqref{eq:bitgen:mod_cont_f} that
\[|g(\alpha_b,t)-f(\alpha_b,n)|
    =|f(\alpha_b,r(t))-f(\alpha_b,n)|
    \leqslant4^tB\tfrac{4^{-n-1}}{B}=\tfrac{1}{4}.\]
And since $\tfrac{\sqrt{3}}{2}-\tfrac{1}{4}\geqslant\tfrac{1}{2}$, we conclude
using \eqref{eq:bitgen:f} that for any $t\in\left[n,n+\tfrac{1}{2}\right]$,
\begin{equation}\label{eq:bitgen:g}
g(\alpha_b,t)\in\left[-1,-\tfrac{1}{2}\right]\text{ if }b_n=0,
\qquad\qquad g(\alpha_b,t)\in\left[\tfrac{1}{2},1\right]\text{ if }b_n=1.
\end{equation}
Finally, let $\alpha,t\in\R$, $\mu\in\Rp$ and define
\[\bitgen(\alpha,\mu\,t)=\frac{1+\tanh(2\mu g(\alpha,t))}{2}.\]
Note that $\bitgentext$ is generable because $\tanh$ and $g$ are generable. Let $\mu\in\Rp$,
$n\in\N$ and $t\in\left[n,n+\tfrac{1}{2}\right]$.
If $b_n=0$, then it follows from \eqref{eq:bitgen:g} that
\begin{align*}
    g(\alpha_b,n)&\leqslant-\tfrac{1}{2}\\
    |\tanh(2\mu g(\alpha_b,n))-\sgn(2\mu g(\alpha_b,n))|&\leqslant e^{-2\mu|g(\alpha_b,n)|}&&\text{using Lemma~\ref{lem:tanh}}\\
    |\tanh(2\mu g(\alpha_b,n))+1|&\leqslant e^{-\mu}\\
    |\bitgen(\alpha_b,\mu,t)-b_n|&\leqslant \tfrac{1}{2}e^{-\mu}\leqslant e^{-\mu}&&\text{since }b_n=0.
\end{align*}
Similarly, if $b_n=1$, then
\begin{align*}
    g(\alpha_b,n)&\geqslant\tfrac{1}{2}\\
    |\tanh(2\mu g(\alpha_b,n))-\sgn(2\mu g(\alpha_b,n))|&\leqslant e^{-2\mu|g(\alpha_b,n)|}\\
    |\tanh(2\mu g(\alpha_b,n))-1|&\leqslant e^{-\mu}\\
    |\bitgen(\alpha_b,\mu,t)-b_n|&\leqslant \tfrac{1}{2}e^{-\mu}\leqslant e^{-\mu}&&\text{since }b_n=1.
\end{align*}
Finally, it is clear from \eqref{eq:bitgen:def_alpha} that the partial sums are easily computable
and form a Cauchy sequence that converges at rate $4^{-k}$, thus $\alpha_b$ is computable from $b$.
\end{proof}

\section{Generating an almost piecewise constant function}
\label{sec:almost}

We have already explained the main intuition of this section in previous sections.
Using the dyadic generator and the bit generator as a signal, we can construct a
system that ``samples'' the dyadic at the right time and then holds
this value still until the next
dyadic. In essence, we just described an almost piecewise constant function. This function
still has a limitation: its rate of change is small so it can only approximate slowly
changing functions. Figure~\ref{fig:pwcgen} illustrates how this process
works at the high-level.

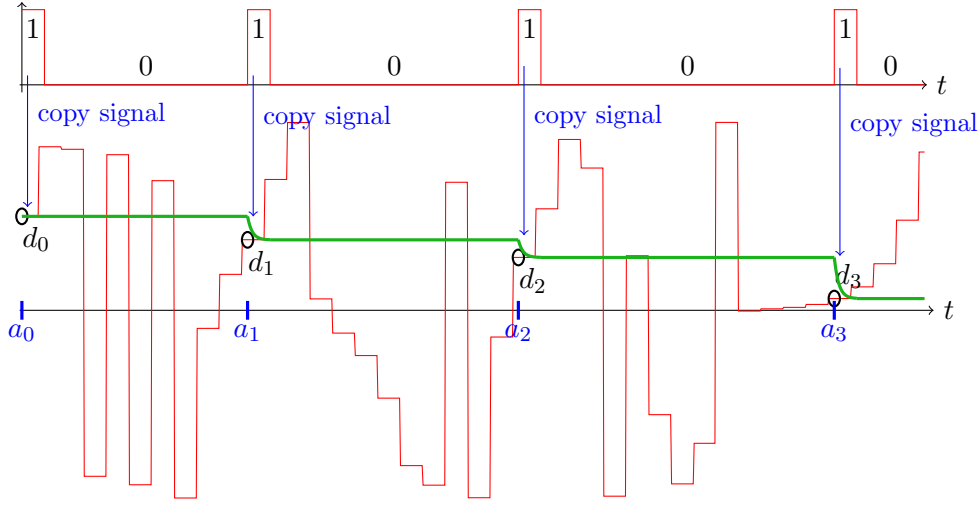
\begin{figure}[h]
    \begin{center}
    \begin{tikzpicture}[xscale=0.3,yscale=1,
            final signal/.style={very thick,darkgreen}
            ]
        \draw[white] (-1,0) -- (11.2,0);
        \begin{scope}
            \draw[->] (-0.1,0) -- (40.1,0) node[right] {$t$};
            \draw[->] (0,-0.1) -- (0,1.1);
            \draw[red] (0,1) -- ++(1,0) node[midway,below,black] {1}
                -- ++(0,-1) --++(9,0) node[midway,above,black] {0}
                -- ++(0,1) -- ++(1,0) node[midway,below,black] {1}
                -- ++(0,-1) --++(11,0) node[midway,above,black] {0}
                -- ++(0,1) -- ++(1,0) node[midway,below,black] {1}
                -- ++(0,-1) --++(13,0) node[midway,above,black] {0}
                -- ++(0,1) -- ++(1,0) node[midway,below,black] {1}
                -- ++(0,-1) --++(3,0) node[midway,above,black] {0};
        \end{scope}
        \begin{scope}[shift={(0,-3)},yscale=2.5]
                \draw[->] (-0.1,0) -- (40.4,0) node[right] {$t$};
                \draw[color=red,domain=0:40,samples=1000] plot function{
                    sin(2*0.08404422889012441721945378958480432629585*pi*exp(log(2)*floor(x-1/4.+1/2.)))};
                \foreach \idx/\x/\y/\p/\s in {0/0/0.5/below/0.5em, 1/10/0.375/below/0.5em, 2/22/0.28125/below/0.5em, 3/36/0.0625/above/.5em}
                {
                    \draw[color=black,thick] (\x,\y) circle[x radius=0.25,y radius=0.25/6] node[\p,xshift=\s] {$d_\idx$};
                    \draw[color=blue,very thick] (\x,-0.05) -- (\x,0.05) node[below=1ex] {$a_\idx$};
                }
                \draw[final signal] (0,0.5) -- (10,0.5);
                \draw[blue,<-] (0.25,0.55) -- ++(0,0.7) node[pos=0.7,right] {\small copy signal};
                \begin{scope}[shift={(10,0.5)}]
                \draw[final signal,domain=0:1,samples=50] plot function{(0.5-0.375)*(exp(-5*x)-1)};
                \end{scope}
                \draw[final signal] (11,0.375) -- (22,0.375);
                \draw[blue,<-] (10.25,0.5) -- ++(0,0.75) node[pos=0.7,right] {\small copy signal};
                \begin{scope}[shift={(22,0.375)}]
                \draw[final signal,domain=0:1,samples=50] plot function{(0.375-0.28125)*(exp(-5*x)-1)};
                \end{scope}
                \draw[final signal] (23,0.28125) -- (36,0.28125);
                \draw[blue,<-] (22.25,0.4) -- ++(0,0.9) node[pos=0.7,right] {\small copy signal};
                \begin{scope}[shift={(36,0.28125)}]
                \draw[final signal,domain=0:1,samples=50] plot function{(0.28125-0.0625)*(exp(-5*x)-1)};
                \end{scope}
                \draw[final signal] (37,0.0625) -- (40,0.0625);
                \draw[blue,<-] (36.25,0.29) -- ++(0,1.0) node[pos=0.7,right] {\small copy signal};
        \end{scope}
    \end{tikzpicture}
    \end{center}
    \caption{Illustration of the process to generate an almost piecewise-constant function: $\bitgen$ is used to
        generate a copy signal, which we synchronise with $\dygen$ to copy exactly the sequence of dyadic numbers
        we specified. In-between copies, we ensure that copied value does not change (sample and hold).\label{fig:pwcgen}}
\end{figure}

\begin{thm}\label{th:pwcgen}
There exists an absolute constant $\delta\in\N$, $p\in\N$, $\Gamma\subseteq\R^p$
and a uniformly-generable function $\pwcgen:\Gamma\times\Rp\to\R$
such that for any dyadic sequence $q\in\D^\N$, there exists $\alpha_q\in\Gamma$ such that for any $n\in\N$, putting
$a_n=\sum_{k=0}^{n-1}(\delta+\dysize(q_k))$, we have that
\[|\pwcgen(\alpha_q;t)-q_n|\leqslant 2^{-\dysize(q_n)}\qquad\text{for any }t\in [a_n+\tfrac{1}{2},a_{n+1}]\]
and
$\pwcgen(\alpha_q;t)\in I_n$ for any $t\in [a_n,a_n+\tfrac{1}{2}]$
where\footnote{With the convention that $[a,b]=[\min(a,b),\max(a,b)]$ and $I+\alpha J=\set{x+\alpha y:x\in I,y\in J}$.}
\[I_n:=\left[\pwcgen(\alpha_q;a_n),\pwcgen(\alpha_q;a_n+\tfrac{1}{2})\right]+2^{-\dysize(q_n)}[-1,1].\]
Finally, the map $q\mapsto\alpha_q$ is $(\nu_\Q^\omega,\rho^p)$-computable.
\end{thm}

\begin{proof}
Apply Theorem~\ref{th:dygen} to get $\delta\in\N$, $\dygentext$ uniformly-generable,
and $\alpha_q,\beta_q$ such that for any $n\in\N$,
\[|\dygen(\alpha_q,\beta_q;t)-q_n|\leqslant 2^{-\dysize(q_n)-3}\qquad\text{for any }t\in [a_{n},a_n+\tfrac{1}{2}]\]
where $a_n=\sum_{k=0}^{n-1}(\dysize(q_k)+\delta)$.
Let $b\in\set{0,1}^\N$ be the bit sequence defined by
\[b_n=\begin{cases}1&\text{if }n=a_k\text{ for some }k\\0&\text{otherwise.}\end{cases}\]
Apply Theorem~\ref{th:bitgen} to get $\bitgentext$ and $\gamma_b$ such that for any $\mu\in\Rp$,
$n\in\N$ and $t\in[n,n+\tfrac{1}{2}]$,
\[|\bitgen(\gamma_b,\mu,t)-b_n|\leqslant e^{-\mu}.\]
Let $x_n=\dysize(q_n)+3$ and apply Theorem~\ref{th:fastgen} to get $\lambda_a$ and $\fastgentext$
such that for any $n\in\N$,
\[\fastgen(\lambda_a;t)\geqslant x_n\qquad\text{for all }t\in[n,n+1].\]
Consider the following system for all $t\in\Rp$:
\[y(0)=q_0,\qquad y'(t)=\psi(t)r(t),\qquad
r(t)=\pereach(t,\phi(t),y(t),g(t))\]
where
\[\phi(t)=t+\fastgen(\lambda_a;t),
\qquad \psi(t)=2\bitgen(\gamma_b,\phi(t)+R(t),t),\]
\[g(t)=\dygen(\alpha_q,\beta_q;t),
\qquad R(t)=1+r(t)^2.\]
We omitted the parameters $\alpha,\beta,\gamma,\lambda$ in the functions $g$ and $\phi$
to make it more readable. It is clear that $g$ and $\phi$ are uniformly-generable since $\bitgentext,\fastgentext,\dygentext$
are uniformly-generable. It follows that\footnote{Technically, we have to include $q_0$ in the
parameters, even though the sequence $q$ is implicitly encoded in other parameters. This is because
the initial condition must be $q_0$ for the proof to work.} $(q_0,\alpha,\beta,\gamma,\lambda,t)\mapsto y(t)$ is also uniformly-generable
by Theorem~\ref{th:unif_gpac_ext_ivp_stable}. Indeed, the only extra thing we need to check is that the initial
condition is a computable function of the parameters, which it is since we just need to extract $q_0$
from the list of parameters.

We will show the result by induction. Let $n\in\N$ and assume that $|y(a_n)-q_n|\leqslant 2^{-x_n}$.
Note that this is trivially satisfied for $n=0$ since $a_0=0$ and thus $y(a_0)=y(0)=q_0$.
We will now do the analysis of the behavior of $y$ over $[a_n,a_{n+1}]$ by making a case distinction between
$[a_n,a_n+1]$ and $[a_n+1,a_{n+1}]$. Note that for all $t$, $R(t)\geqslant|r(t)|\geqslant 0$.

\noindent \textbf{When $\mathbf{t\in[a_n,a_n+\tfrac{1}{2}]}$}, we have that
\[|\bitgen(\gamma_b,\mu,t)-b_{a_n}|\leqslant e^{-\mu}\]
but $b_{a_n}=1$ by definition thus
\[\bitgen(\gamma_b,\phi(t)+R(t),t)\geqslant 1-e^{-\phi(t)}\geqslant\tfrac{1}{2}\]
since $\phi(t)\geqslant\fastgen(\lambda_a;t)\geqslant 1$.
Furthermore,
\[\phi(t)\geqslant\fastgen(\lambda_a;t)\geqslant\fastgen(\lambda;n)\geqslant x_n\geqslant 2.\]
Thus $\psi(t)\phi(t)=2\bitgen(\gamma_b,\phi(t)+R(t),t)\phi(t)\geqslant x_n\geqslant 2$.
Furthermore,
\begin{equation}\label{eq:pwcgen:bound_g}
|g(t)-q_n|=|\dygen(\alpha_q,\beta_q;t)-q_n|\leqslant 2^{-x_n}
\end{equation}
thus we can apply Theorem~\ref{th:pereach} to get the existence of $y$ and item~\ref{th:pereach:conv} to get that
\begin{equation}\label{eq:pwcgen:bound_y_half}
|y(a_n+\tfrac{1}{2})-q_n|\leqslant 2^{-x_n}+e^{-x_n}
    \leqslant 2^{-x_n+1}.
\end{equation}
Note that \eqref{eq:pwcgen:bound_g} implies that
\[q_n-2^{-x_n}\leqslant g(t)\leqslant q_n+2^{-x_n}\]
and thus \eqref{eq:pwcgen:bound_y_half} proves that
\begin{equation}\label{eq:pwcgen:bounds_qn}
y(a_n+\tfrac{1}{2})-2^{-x_n+1}\leqslant q_n\leqslant y(a_n+\tfrac{1}{2})+2^{-x_n+1}.
\end{equation}
Furthermore, Theorem~\ref{th:pereach} item~\ref{th:pereach:lower_bound} also gives us that
\begin{align*}
\min(y(a_n),q_n-2^{-x})&\leqslant y(t)\leqslant\max(y(a_n),q_n+2^{-x_n})\\
\min(y(a_n),y(a_n+\tfrac{1}{2})-3\cdot2^{-x_n})&\leqslant y(t)\leqslant\max(y(a_n),y(a_n+\tfrac{1}{2})+3\cdot2^{-x_n})&&\text{using \eqref{eq:pwcgen:bounds_qn}}\\
&y(t)\in[y(a_n),y(a_n+\tfrac{1}{2})]+[-3\cdot2^{-x_n},3\cdot2^{-x_n}]\\
&y(t)\in[y(a_n),y(a_n+\tfrac{1}{2})]+[-2^{-\dysize(q_n)},2^{-\dysize(q_n)}]\numberthis\label{eq:pwcgen:ineq_y_2}.
\end{align*}

\noindent \textbf{When $\mathbf{t\in[a_n+k+\tfrac{1}{2},a_n+k+1]}$ for $\mathbf{a_n\leqslant a_n+k<a_{n+1}}$},
we have that
\[y'(t)=\psi(t)\pereach(t,\phi(t),y(t),g(t))\]
where $|\psi(t)|\leqslant 2$ since $|\bitgen|\leqslant 1$ by Theorem~\ref{th:bitgen}
and
\[\phi(t)\geqslant t+\fastgen(\lambda_a;t)\geqslant t+\fastgen(\lambda;n)\geqslant t+x_n.\]
Thus by Theorem~\ref{th:pereach} item~\ref{th:pereach:constant} we have that
\[|y(t)-y(a_n+k+\tfrac{1}{2})|
    \leqslant\int_{a_n+k+\tfrac{1}{2}}^t\psi(u)\exp(-\phi(u))du
    \leqslant\tfrac{1}{2}2e^{-x_n}\int_{a_n+k+\tfrac{1}{2}}^te^{-u}du.\]

\noindent \textbf{When $\mathbf{t\in[a_n+k,a_n+k+\tfrac{1}{2}]}$ for $\mathbf{a_n<a_n+k<a_{n+1}}$},
we have that
\[|\bitgen(\gamma_b,\mu,t)-b_{a_n+k}|\leqslant e^{-\mu}\]
but $b_{a_n+k}=0$ by definition thus
\[|\bitgen(\gamma_b,\phi(t)+R(t),t)|\leqslant e^{-\phi(t)-R(t)}.\]
Furthermore,
\[\phi(t)\geqslant t+\fastgen(\lambda_a;t)\geqslant t+\fastgen(\lambda;n)\geqslant t+x_n.\]
Thus
\[|\psi(t)|=2|\bitgen(\gamma_b,\phi(t)+R(t),t)|
    \leqslant 2e^{-t-x_n}e^{-R(t)}
    \leqslant 2e^{-t-x_n}e^{-|r(t)|}.\]
It follows that
\[y'(t)=\psi(t)r(t)\]
where
\[|\psi(t)r(t)|
    \leqslant 2e^{-\phi(t)-R(t)}|r(t)|
    \leqslant 2e^{-t-x_n}.\]
Consequently,
\[|y(t)-y(a_n+k)|
    \leqslant\int_{a_n+k}^{a_n+k+\tfrac{1}{2}}|\psi(u)r(u)|du
    \leqslant e^{-x_n}\int_{a_n+k}^{a_n+k+\tfrac{1}{2}}e^{-u}du.\]

\noindent \textbf{Putting everything together} we get that for all $t\in[a_n+\tfrac{1}{2},a_{n+1}]$,
\[|y(t)-y(a_n+\tfrac{1}{2})|\leqslant e^{-x_n}\int_{a_n+1}^te^{-u}du\leqslant e^{-x_n}\]
and thus using \eqref{eq:pwcgen:bound_y_half}, for all $t\in[a_n+\tfrac{1}{2},a_{n+1}]$,
\begin{equation*}\label{eq:pwcgen:ineq_y_1}
|y(t)-q_n|
    \leqslant e^{-x_n}+|y(a_n+\tfrac{1}{2})-q_n|
    \leqslant e^{-x_n}+2^{-x_n+1}\leqslant 2^{-x_n+2}\leqslant 2^{-\dysize(q_n)}.
\end{equation*}
Also recall \eqref{eq:pwcgen:ineq_y_2} that for all $t\in[a_n,a_n+\tfrac{1}{2}]$,
\begin{equation*}\label{eq:pwcgen:ineq_y_2bis}
y(t)\in[y(a_n),y(a_n+\tfrac{1}{2})]+[-2^{-\dysize(q_n)},2^{-\dysize(q_n)}].
\end{equation*}
We have already shown that the map
$Y(q_0,\alpha_q,\beta_q,\gamma_q,\lambda_a,t)=y(t)$ is uniformly-generable.
Finally, we need to show computability of the map $q\mapsto(\alpha_q,\beta_q,\gamma_b,\lambda_a)$.
Computability of $\alpha_q$ and $\beta_q$ follows from Theorem~\ref{th:dygen}. The sequence
$(a_n)_n$, and thus $(b_n)_n$, is easily computed from $q$. It follows that from
Theorem~\ref{th:bitgen} that $\gamma_b$ is computable from $b$, and from Theorem~\ref{th:fastgen}
that $\lambda_a$ is computable from $a$.
\end{proof}

\section{Proof of the main theorem}
\label{sec:mainth}

The proof works in several steps. First we show that using an almost constant function,
we can approximate functions that are bounded and change very slowly. We then
relax all these constraints until we get to the general case. In the following,
we only consider total functions over $\R$. See Remark on page~\pageref{rem:domain_of_def} for
more details.

\begin{defi}[Universality] Let $I\subseteq\R$ and $\mathcal{C}\subseteq C^0(I)\times C^0(I,\Rps)$. We say that
the universality property holds for $\mathcal{C}$ if there exists $d\in\N$, $\Gamma\subseteq\R^d$
and a uniformly-generable function $\mathfrak{u}:\Gamma\times I\to\R$ such that for every $(f,\varepsilon)\in\mathcal{C}$,
there exists $\alpha\in\Gamma$ such that
\[|\mathfrak{u}(\alpha;t)-f(t)|\leqslant\varepsilon(t)
\qquad\text{for all }t\in I.\]
The universality property is said to be \emph{effective} if furthermore the map
$(f,\varepsilon)\mapsto\alpha$ is $([\rho\to\rho]^2,\rho^d)$-computable.
\end{defi}

\begin{lem}\label{lem:univ_class_1}
There exists a constant $c>0$ such that the universality property holds for all
$(f,\varepsilon)$ on $\Rp$ such that for all $t\in\Rp$:
\begin{itemize}
\item $\varepsilon$ is decreasing and $-\log_2\varepsilon(t)\leqslant c'+t$ for some constant $c'$,
\item $f(t)\in[0,1]$,
\item $|f(t)-f(t')|\leqslant c\varepsilon(t+1)$ for all $t'\in[t,t+1]$.
\end{itemize}
Furthermore, the universality property is effective for this class.
\end{lem}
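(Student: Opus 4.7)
The plan is to apply Theorem~\ref{th:pwcgen} with a dyadic sequence $(q_n)\in\D^\N$ adapted to $f$, taking $\mathfrak{u}(\alpha,t):=\pwcgen(\alpha,t)$. Given $(f,\varepsilon)$ in the class, I build $(q_n)$ inductively: knowing $a_n=\sum_{k<n}(\delta+\dysize(q_k))$, I pick an integer $p_n$ and choose $q_n\in\D_{p_n}$ to be a dyadic within $2^{-p_n}$ of $f(a_n)$ satisfying $\dysize(q_n)=p_n$ exactly (perturbing by $2^{-p_n}$ if necessary). The size $p_n$ must be large enough that $2^{-p_n}$ is a small fraction of $\varepsilon(t)$ everywhere on the constant window $[a_n+\tfrac{1}{2},a_{n+1}]$, yet small enough that the window length $\delta+p_n$ keeps $f$ from drifting significantly. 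The hypothesis $-\log_2\varepsilon(t)\leqslant c'+t$ makes an explicit choice of the form $p_n=\lceil c'+a_n+C\rceil$ (for a universal constant $C$) compatible with both constraints.

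The error analysis splits into two pieces. On the constant window, the triangle inequality yields
\[|\pwcgen(\alpha_q,t)-f(t)|\leqslant|\pwcgen(\alpha_q,t)-q_n|+|q_n-f(a_n)|+|f(a_n)-f(t)|,\]
where the first two terms are each $\leqslant 2^{-p_n}$ by Theorem~\ref{th:pwcgen} and by construction, and the drift $|f(a_n)-f(t)|$ is bounded by iterating condition~3 across the $\leqslant\delta+p_n$ unit intervals covering $[a_n,t]$ to obtain $c\sum_{k=1}^{\delta+p_n}\varepsilon(a_n+k)$; since $\varepsilon$ is decreasing, this is further bounded by $c(\delta+p_n)\varepsilon(a_n)$. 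Choosing the absolute constant $c$ small enough, this drift becomes a fraction of $\varepsilon(t)$ and the total is $\leqslant\varepsilon(t)$. On the transition window $[a_n,a_n+\tfrac{1}{2}]$, the second item of Theorem~\ref{th:pwcgen} places $\pwcgen(\alpha_q,t)$ monotonically between values close to $q_{n-1}$ and $q_n$, both of which are close to $f$ near time $a_n$; applying condition~3 to the short $\tfrac{1}{2}$-unit interval gives the required bound there as well.

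For the effective version, the map $(f,\varepsilon)\mapsto(q_n)$ is computable because dyadic rounding to a known precision is a computable operation given oracles for $f$ and $\varepsilon$, and Theorem~\ref{th:pwcgen} then provides the $(\nu_\Q^\omega,\rho^p)$-computability of $q\mapsto\alpha_q$. The main obstacle is the recursive coupling of $p_n$ with the interval length $\delta+p_n$ and the local precision requirement: the precision needed on the constant window depends on $\varepsilon(a_{n+1})$, which itself depends on $p_n$. This circularity is broken by using $-\log_2\varepsilon(t)\leqslant c'+t$ to pre-estimate $p_n$ conservatively from $a_n$ alone, with the slack absorbed into the universal constant $c$.
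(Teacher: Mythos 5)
Your approach is missing the key idea of the paper's proof: a \emph{time reparametrization}. You plan to approximate $f(t)$ directly by $\pwcgen(\alpha_q,t)$, but the paper instead approximates $f(t)$ by $\pwcgen(\alpha_q,\xi(t))$, where $\xi$ is the increasing generable quadratic with $\xi(n)=a_n$. This compresses each interval $[a_n,a_{n+1}]$ (whose length $\delta+\dysize(q_n)$ grows without bound) down to the unit interval $[n,n+1]$ in the argument of $f$. Consequently, $q_n$ is chosen close to $f(n)$ (not $f(a_n)$), and both the precision $2^{-\dysize(q_n)}$ and the drift of $f$ are compared to $\varepsilon(n+1)$, yielding the clean bound $3c\,\varepsilon(n+1)\leqslant\varepsilon(t)$.

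Without this rescaling, the circularity you identify is genuinely fatal, not resolvable by a ``conservative'' choice of $p_n$. Concretely, for $t$ at the right end of the constant window you must achieve $2\cdot 2^{-p_n}+c(\delta+p_n)\varepsilon(a_n)\leqslant\varepsilon(t)$ with $\varepsilon(t)$ as small as $\varepsilon(a_{n+1})$. The precision term already forces roughly $p_n\geqslant c'+a_{n+1}=c'+a_n+\delta+p_n$, which is impossible. And even setting aside the precision term, since $\varepsilon$ is decreasing you have $\varepsilon(t)\leqslant\varepsilon(a_n)$, so the drift bound $c(\delta+p_n)\varepsilon(a_n)$ can only be a fraction of $\varepsilon(t)$ if $c(\delta+p_n)<1$; this fails for large $n$ because $c$ is a fixed absolute constant while $p_n\to\infty$. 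The hypothesis $-\log_2\varepsilon(t)\leqslant c'+t$ is used in the paper's proof only to guarantee that one may take $\dysize(q_n)=c'+n+1$ (so that $\xi$ is a fixed quadratic and hence generable), not to absorb the runaway growth of the unscaled window lengths. Note also that the paper needs a small additional shifting trick at the end (via the operator $S$) to get the estimate from $t\geqslant 1$ down to $t\geqslant 0$, and on the transition window $[a_n,a_n+\tfrac12]$ it interpolates between the two neighbouring estimates to obtain a bound $8c\,\varepsilon(t)$, which is why it sets $c=\tfrac18$.
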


\begin{proof}[Proof Sketch]
This is essentially a application of $\pwcgentext$ with a small twist.
Indeed the bound on $f$ guarantees that dyadic rationals are enough. The bound on the rate of
change of $f$ guarantees that a single dyadic can provide an approximation for
a long enough time. And the bound on $\varepsilon$ guarantees that we do not need
too many digits for the approximations.
\end{proof}

\begin{proof}

Let $c=\tfrac{1}{8}$. Apply Theorem~\ref{th:pwcgen} to get $p,\delta\in\N$ and $\pwcgentext$. Let $f$ and $\varepsilon$
be as described in the statement.
For any $n\in\N$, let $q_n\in\D$ be such that
\begin{equation}\label{eq:len:univ_class_1:def_q}
|f(n)-q_n|
    \leqslant2^{-\dysize(q_n)}
    \leqslant c\varepsilon(n+1).
\end{equation}
Since by assumption, $-\log_2\varepsilon(n+1)\leqslant c'+n+1$, we can always choose $q_n$ so
that
\[\dysize(q_n)=\myceil{c'}+n+1.\]
Then by Theorem~\ref{th:pwcgen},
there exists $\alpha_q\in\R^p$ such that
\[|\pwcgen(\alpha_q;t)-q_n|\leqslant 2^{-\dysize(q_n)}\qquad\text{for any }t\in [a_n+\tfrac{1}{2},a_{n+1}]\]
and
\begin{equation}\label{eq:univ_class_1:range}
\pwcgen(\alpha_q;t)\in\left[\pwcgen(\alpha_q;a_n),\pwcgen(\alpha_q;a_n+\tfrac{1}{2})\right]
\qquad\text{for any }t\in [a_n,a_n+\tfrac{1}{2}]
\end{equation}
where
\[
a_n
    =\sum_{k=0}^{n-1}(\delta+\dysize(q_k))
    =\sum_{k=0}^{n-1}(\delta+\myceil{c'}+n+1)
    =n(\delta+\myceil{c'}+1)+\tfrac{1}{2}n(n-1).
\]
Introduce the function
\[\xi(t)=t(\delta+\myceil{c'}+1)+\tfrac{1}{2}t(t-1)\]
so that $a_n=\xi(n)$ and note that $\xi$ is increasing and generable.
Let $t\in[\xi^{-1}(a_n+\tfrac{1}{2}),\xi^{-1}(a_{n+1})]$, since $\xi$ is increasing, so is $\xi^{-1}$
and
\[n=\xi^{-1}(a_n)\leqslant\xi^{-1}(a_n+\tfrac{1}{2})\leqslant t\leqslant\xi^{-1}(a_{n+1})=n+1.\]
So in particular,
\[|f(n)-f(t)|\leqslant c\varepsilon(n+1)\]
by the assumption on $f$, since $t\in[n,n+1]$. It follows that
\begin{align*}
|\pwcgen(\alpha_q;\xi(t))-f(t)|
    &\leqslant|\pwcgen(\alpha_q;\xi(t))-q_n|\\
    &\hspace{.3cm}+|q_n-f(n)|+|f(n)-f(t)|\\
    &\leqslant 2^{-\dysize(q_n)}+2^{-\dysize(q_n)}+c\varepsilon(n+1)&&\text{since }\xi(t)\in[a_n+\tfrac{1}{2},a_{n+1}]\\
    &\leqslant 3c\varepsilon(n+1)\\
    &\leqslant 3c\varepsilon(t)&&\hspace{-3.5em}\text{since $\varepsilon$ is decreasing}\\
    &\leqslant\varepsilon(t)&&\text{since }3c\leqslant1.
\end{align*}
So in particular, in implies that for all $n\in\N$,
\begin{equation}\label{eq:univ_class_1:point_a_n}
|\pwcgen(\alpha_q;a_n+\tfrac{1}{2})-f(\xi^{-1}(a_n+\tfrac{1}{2}))|\leqslant 3c\varepsilon(n+1)
\end{equation}
and
\begin{equation}\label{eq:univ_class_1:point_a_n_half}
|\pwcgen(\alpha_q;a_{n+1})-f(n+1)|\leqslant 3c\varepsilon(n+2).
\end{equation}
Let $t\in[\xi^{-1}(a_{n+1}),\xi^{-1}(a_{n+1}+\tfrac{1}{2})]$, it follows using \eqref{eq:univ_class_1:range}
that there exists $\lambda\in[0,1]$ such that
\[\pwcgen(\alpha_q;\xi(t))=\lambda\pwcgen(\alpha_q;a_{n+1})+(1-\lambda)\pwcgen(\alpha_q;a_{n+1}+\tfrac{1}{2}).\]
We also have that
\[n+1=\xi^{-1}(a_{n+1})\leqslant t\leqslant \xi^{-1}(a_{n+1}+\tfrac{1}{2})\leqslant\xi^{-1}(a_{n+2})=n+2.\]
Thus
\begin{align*}
|\pwcgen(\alpha_q;\xi(t))-f(t)|
    &\leqslant \lambda|\pwcgen(\alpha_q;a_{n+1})-f(t)|\\
    &\hspace{2em}+(1-\lambda)|\pwcgen(\alpha_q;a_{n+1}+\tfrac{1}{2})-f(t)|\\
    &\leqslant |\pwcgen(\alpha_q;a_{n+1})-f(n+1)|+|f(n+1)-f(t)|\\
    &\hspace{2em}+|\pwcgen(\alpha_q;a_{n+1}+\tfrac{1}{2})-f(\xi^{-1}(a_{n+1}+\tfrac{1}{2}))|\\
    &\hspace{2em}+|f(\xi^{-1}(a_{n+1}+\tfrac{1}{2}))-f(t)|\\
    &\leqslant 3c\varepsilon(n+2)+|f(n+1)-f(t)|
        &&\hspace{-3em}\text{using \eqref{eq:univ_class_1:point_a_n}}\\
    &\hspace{2em}+3c\varepsilon(n+2)
        &&\hspace{-3em}\text{using \eqref{eq:univ_class_1:point_a_n_half}}\\
    &\hspace{2em}+|f(\xi^{-1}(a_{n+1}+\tfrac{1}{2}))-f(t)|\\
    &\leqslant 3c\varepsilon(n+2)+c\varepsilon(n+2)
        &&\hspace{-6em}\text{since }t\in[n+1,n+2]\\
    &\hspace{2em}+3c\varepsilon(n+2)\\
    &\hspace{2em}+c\varepsilon(t)
        &&\hspace{-4cm}\text{since }\xi^{-1}(a_{n+1}+\tfrac{1}{2})\in[t,t+1]\\
    &\leqslant 7c\varepsilon(n+2)+c\varepsilon(t)\\
    &\leqslant8c\varepsilon(t)&&\hspace{-2cm}\text{since $\varepsilon$ decreasing}\\
    &\leqslant\varepsilon(t)
        &&\hspace{-3em}\text{since }8c\leqslant1.
\end{align*}
Putting everything together, we can get that
\[|\pwcgen(\alpha_q;\xi(t))-f(t)|\leqslant\varepsilon(t)\qquad\text{for all }t\geqslant\xi^{-1}(a_0+\tfrac{1}{2}).\]
But note that $\xi^{-1}(a_0+\tfrac{1}{2})\leqslant\xi^{-1}(a_1)=1$ so we have that
\[|\pwcgen(\alpha_q;\xi(t))-f(t)|\leqslant\varepsilon(t)\qquad\text{for all }t\geqslant 1\]
and note that $(\alpha;t)\mapsto\pwcgen(\alpha;\xi(t))$ is uniformly-generable.

Note that this is not exactly the claimed result since it is only true for $t\geqslant 1$
instead of $t\geqslant 0$ but this can remedied for with proper shifting. Indeed,
consider the operator
\[(Sf)(t)=f(\max(t-1,0)).\]
We claim that if $(f,\varepsilon)$ satisfies the assumption of the Lemma, then so
does $(Sf,S\varepsilon)$ and
\begin{align*}
|\pwcgen(\alpha;\xi(t))-(Sf)(t)|
    &\leqslant(S\varepsilon)(t)&&\text{for all }t\geqslant 1\\
|\pwcgen(\alpha;\xi(t))-f(t-1)|
    &\leqslant\varepsilon(t-1)&&\text{for all }t\geqslant 1\\
|\pwcgen(\alpha;\xi(t+1))-f(t)|
    &\leqslant\varepsilon(t)&&\text{for all }t\geqslant 0.
\end{align*}
We need to show computability of the map $(f,\varepsilon)\mapsto\alpha_q$. By Theorem~\ref{th:pwcgen},
it is enough to show computability of $(f,\varepsilon)\mapsto q$. Since the continuous
function evaluation map is computable (for the representation we use),
the maps $(f,n)\mapsto f(n)$ and $(\varepsilon,n)\mapsto c\varepsilon(n+1)$ are
$([[\rho\to\rho],\nu_\N],\rho)$-computable. It follows that for every $n$, we can compute an integer
$p_n$ such that $2^{-p_n}\leqslant c\varepsilon(n+1)$. Indeed, $c\varepsilon(n+1)>0$ thus such a $p_n$
exists and any Cauchy sequence for $c\varepsilon(n+1)$ is eventually positive; therefore it suffices
to compute rational approximations of $c\varepsilon(n+1)$ with increasing precision until we get
a positive one, from which we can compute $p_n$. Given such a $p_n$, one can compute a dyadic
approximation $q_n$ of $f(n)$ with precision $p_n$. This sequence $(q_n)_n$ then satisfies
\eqref{eq:len:univ_class_1:def_q}.
\end{proof}

\begin{lem}\label{lem:univ_class_2}
The universality property holds for all $(f,\varepsilon)$ on $\Rp$ such that
$f$ and $\varepsilon$ are differentiable, $\varepsilon$ is decreasing and $f(t)\in[0,1]$ for all $t\in\Rp$.
Furthermore, the universality property is effective for this class if we are given
a representation of $f'$ and $\varepsilon'$ as well\footnote{In other words, the map $(f,f',\varepsilon)\mapsto\alpha$
    is $([\rho\to\rho]^4,\rho^d])-$computable. This is necessary because one can build some
    computable $f$ such that $f'$ is not computable \cite{Wei00}.}
\end{lem}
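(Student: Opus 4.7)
The plan is to reduce to Lemma~\ref{lem:univ_class_1} by a time reparametrization: I construct a strictly increasing, unbounded, generable map $\theta : \Rp \to \Rp$ that grows fast enough that $\tilde f := f \circ \theta^{-1}$ and $\tilde\varepsilon := \varepsilon \circ \theta^{-1}$ satisfy the hypotheses of Lemma~\ref{lem:univ_class_1}. That lemma then yields a generable $\mathfrak{u}_1$ and parameter $\tilde\alpha$ with $|\mathfrak{u}_1(\tilde\alpha,s) - \tilde f(s)| \leq \tilde\varepsilon(s)$, and $\mathfrak{u}(\alpha,t) := \mathfrak{u}_1(\tilde\alpha,\theta(t))$ (with $\alpha$ packaging $\tilde\alpha$ together with the parameters of $\theta$) is generable by Lemma~\ref{lemma:closure} and satisfies $|\mathfrak{u}(\alpha,t) - f(t)| \leq \varepsilon(t)$, which is the conclusion.

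To build $\theta$, let $c$ be the constant in Lemma~\ref{lem:univ_class_1} and, for each $n \in \N$, set
\[M_n = 1 + \max_{t \in [\max(n-1,0),\,n+2]} |f'(t)|, \qquad \eta_n = \min_{t \in [\max(n-1,0),\,n+2]} \varepsilon(t) > 0,\]
and $K_n = \max\bigl(n+1,\; 1 + |\log_2 \eta_n|,\; M_n/(c\eta_n)\bigr)$. Applying Theorem~\ref{th:fastgen} to the sequence $(K_n)$ yields a positive non-decreasing generable $h$ with $h(t) \geq K_n$ whenever $t \geq n$, and the generable ODE $\theta'(t) = h(t)$, $\theta(0) = 0$ (covered by Theorem~\ref{th:gpac_ext_ivp_stable}) defines the desired $\theta$. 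For the logarithmic bound, if $t := \theta^{-1}(s) \in [n,n+1]$ with $n \geq 1$, then $s = \theta(t) \geq \theta(n) \geq K_{n-1} \geq -\log_2 \eta_{n-1} \geq -\log_2 \varepsilon(t)$, and the boundary case $n = 0$ is absorbed into a suitable additive constant $c'$. For the rate-of-change bound, given $s' \in [s,s+1]$, let $t' = \theta^{-1}(s')$ and $T = \theta^{-1}(s+1)$; from $\int_t^T h = 1$ and $h \geq K_n$ on $[t,T]$ one gets $T - t \leq 1/K_n \leq 1$, so $T \leq n+2$ and $\eta_n \leq \varepsilon(T)$, giving
\[|\tilde f(s') - \tilde f(s)| \leq M_n(t'-t) \leq M_n(T-t) \leq M_n/K_n \leq c\,\eta_n \leq c\,\varepsilon(T) = c\,\tilde\varepsilon(s+1),\]
as required.

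The main subtlety is that a \emph{single} growth rate $K_n$ must simultaneously control both the logarithmic bound on $\tilde\varepsilon$ and the Lipschitz-like bound on $\tilde f$; the key observation is that making $\theta' = h$ large forces $\theta$ to grow rapidly (handling the first) while compressing any new-time window $[s,s+1]$ into an old-time interval of length $\leq 1/K_n$ confined to the neighborhood on which $M_n$ and $\eta_n$ were estimated (handling the second). Effectivity of the map $(f,f',\varepsilon) \mapsto \alpha$ is then straightforward: $M_n$ and $\eta_n$ are computable as extrema of computable continuous functions on computable compacts using the representations of $f'$ and $\varepsilon$, Theorem~\ref{th:fastgen} yields a computable parameter for $h$, the ODE defining $\theta$ preserves computability, and Lemma~\ref{lem:univ_class_1} is effective on its class.
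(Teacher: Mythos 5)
Your proof is correct and takes essentially the same route as the paper: both reparametrize time by a fast-growing generable function built from Theorem~\ref{th:fastgen} (the paper's $h_{\alpha_a}(t)=\int_0^t(1+\fastgen(\alpha,u))\,du$, your $\theta$ with $\theta'=h$, $\theta(0)=0$) and reduce to Lemma~\ref{lem:univ_class_1}. The only cosmetic difference is that you make the growth sequence $K_n$ fully explicit in terms of $M_n$ and $\eta_n$ and verify the two hypotheses directly, whereas the paper extracts essentially the same bound by manipulating $F'$ and $E'/E$ and then observing that $a$ can be taken large enough.
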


\begin{proof}[Proof Sketch]
Consider $F=f\circ h^{-1}$ and $E=\varepsilon\circ h^{-1}$ where $h$ is a fast-growing
function like $\fastgentext$. Then the faster $h$ grows, the slower $E$ and $F$
change and thus we can apply Lemma~\ref{lem:univ_class_1} to $(F,E)$. We
recover an approximation of $f$ from the approximation of $F$.
\end{proof}

\begin{proof}

Apply Lemma~\ref{lem:univ_class_1} to get $c>0$ and $\mathfrak{u}$ uniformly-generable.
For every $n\in\N$, let
\[a_n=\max\left(\frac{\max_{u\in[0,n+2]}|f'(u)|}{c\varepsilon(n)},\frac{-\min_{u\in[0,n+1]}\varepsilon'(u)}{\ln(2)\varepsilon(n)}\right)-1.\]
Check that $a_n$ is increasing because $\varepsilon$ is decreasing.
Then apply Theorem~\ref{th:fastgen} to get $\alpha_a$. Recall that $\fastgen(\alpha_a;\cdot)$ is positive,
thus we can let
\[g(\alpha;t)=\int_0^t1+\fastgen(\alpha;u)du,\qquad h_\alpha(t)=g(\alpha,t).\]
Clearly $g$ is uniformly-generable since $\fastgen$ is uniformly-generable.
Since $\fastgen$ is increasing, $h_\alpha$ is increasing. Furthermore,
\[h_{\alpha_a}(n+1)
    \geqslant\int_n^{1+n}\fastgen(\alpha_a;u)du
    \geqslant\int_n^{1+n}\fastgen(\alpha_a;n)du
    =\fastgen(\alpha_a;n)
    \geqslant a_n.
\]
Thus $h_{\alpha_a}(n)\rightarrow+\infty$ as $m\rightarrow+\infty$. This implies
that $h_{\alpha_a}$ is bijective from $\Rp$ to $\Rp$. Note that since $h_{\alpha_a}$ is
increasing then $h_{\alpha_a}^{-1}$ is also increasing. Also since $h_{\alpha_a}(t)\geqslant t$
then $h_{\alpha_a}^{-1}(t)\leqslant t$ for all $t\in\Rp$.
Let $f,\varepsilon$ be as described in the statement.
For any $\xi\in\Rp$, let
\[F(\xi)=f(h_{\alpha_a}^{-1}(\xi)),\qquad E(\xi)=\varepsilon(h_{\alpha_a}^{-1}(\xi)).\]
Then for any $t\in\Rp$,
\begin{align*}
F'(h_{\alpha_a}(t))
    &=(h_{\alpha_a}^{-1})'(h_{\alpha_a}(t))f'(h_{\alpha_a}^{-1}(h_{\alpha_a}(t)))\\
    &=\tfrac{1}{h_{\alpha_a}'(t)}f'(t)
        &&\text{since }h'(h^{-1})'\circ h=1\\
    &=\tfrac{1}{1+\fastgen(\alpha_a;t)}f'(t).
\end{align*}
Also note that since $h_{\alpha_a}'(t)=1+\fastgen(\alpha_a;t)\geqslant1$,
then $(h_{\alpha_a}^{-1})'(t)\leqslant1$ and thus $h_{\alpha_a}^{-1}$ is 1-Lipschitz.
Let $\xi\in\Rp$ and $\xi'\in[\xi,\xi+1]$. Write $\xi=h_{\alpha_a}(t)$ and $\xi'=h_{\alpha_a}(t')$, then
\begin{align*}
\tfrac{|F(\xi)-F(\xi')|}{E(\xi+1)}
    &\leqslant\tfrac{|\xi-\xi'|\max_{u\in[\xi,\xi']}|F'(\xi)|}{E(\xi+1)}\\
    &\leqslant\tfrac{\max_{u\in[t,t']}|F'(h_{\alpha_a}(u))|}{E(\xi+1)}
        &&\text{since }|\xi-\xi'|\leqslant 1\\
    &=\max_{u\in[t,t']}\tfrac{|f'(u)|}{E(\xi+1)(1+\fastgen(\alpha_a;u))}\\
    &\leqslant\tfrac{\max_{u\in[t,t+1]}|f'(u)|}{\varepsilon(h_{\alpha_a}^{-1}(h_{\alpha_a}(t)+1))(1+\fastgen(\alpha_a;t))}
        &&\text{since $\fastgen$ is increasing}.
    \intertext{but since $h_{\alpha_a}^{-1}$ is 1-Lipschitz and increasing,
        $h_{\alpha_a}^{-1}(h_{\alpha_a}(t)+1)\leqslant h_{\alpha_a}^{-1}(h_{\alpha_a}(t))+1=t+1$,}
    &\leqslant\tfrac{\max_{u\in[t,t+1]}|f'(u)|}{\varepsilon(t+1)(1+\fastgen(\alpha_a;t))}
        &&\text{since $\varepsilon$ is decreasing}\\
    &\leqslant\tfrac{\max_{u\in[t,t+1]}|f'(u)|}{\varepsilon(t+1)(1+\fastgen(\alpha_a;\lfloor t\rfloor))}
        &&\text{since $\fastgen$ is decreasing}\\
    &\leqslant\tfrac{\max_{u\in[t,t+1]}|f'(u)|}{\varepsilon(t+1)(1+a_{\lfloor t\rfloor}))}\\
    &\leqslant\tfrac{\max_{u\in[t,t+1]}|f'(u)|}{\varepsilon(t+1)\frac{\max_{u\in[0,\lfloor t\rfloor+2]}|f'(u)|}{c\varepsilon(\lfloor t\rfloor)}}\\
    &=c\tfrac{\max_{u\in[t,t+1]}|f'(u)|}{\max_{u\in[0,\lfloor t\rfloor+2]}|f'(u)|}
        \tfrac{\varepsilon(\lfloor t\rfloor)}{\varepsilon(t+1)}\\
    &\leqslant c&&\text{since $\varepsilon$ is decreasing.}
\end{align*}
Similarly,
\begin{align*}
E'(h_{\alpha_a}(t))
    &=(h_{\alpha_a}^{-1})'(h_{\alpha_a}(t))\varepsilon'(h_{\alpha_a}^{-1}(h_{\alpha_a}(t)))\\
    &=\tfrac{1}{h_{\alpha_a}'(t)}\varepsilon'(t)\\
    &=\tfrac{1}{1+\fastgen(\alpha_a;t)}\varepsilon'(t).
\end{align*}
Thus
\begin{align*}
-\log_2E(\xi)
    &=-\tfrac{1}{\ln 2}\int_0^\xi \frac{E'(e)}{E(e)}de-\tfrac{1}{2}\log_2 E(0)\\
    &=-\tfrac{1}{2}\log_2 \varepsilon(0)+\tfrac{1}{\ln 2}\int_0^\xi \frac{-E'(e)}{E(e)}de\\
    &\leqslant-\tfrac{1}{2}\log_2 \varepsilon(0)+\tfrac{\xi}{\ln 2}\sup_{e\in[0,\xi]}\frac{-E'(e)}{E(e)}\\
    &\leqslant-\tfrac{1}{2}\log_2 \varepsilon(0)+
        \tfrac{\xi}{\ln 2}\sup_{t\in[0,h_{\alpha_a}^{-1}(\xi)]}\frac{-E'(h_{\alpha_a}(t))}{E(h_{\alpha_a}(t))}\\
    &\leqslant-\tfrac{1}{2}\log_2 \varepsilon(0)+
        \tfrac{\xi}{\ln 2}\sup_{t\in[0,h_{\alpha_a}^{-1}(\xi)]}\frac{-\varepsilon'(t)}{\varepsilon(t)(1+\fastgen(\alpha_a;t))}\\
    &\leqslant-\tfrac{1}{2}\log_2 \varepsilon(0)+
        \tfrac{\xi}{\ln 2}\sup_{t\in[0,\xi]}\frac{-\varepsilon'(t)}{\varepsilon(t)(1+\fastgen(\alpha_a;t))}&&\text{using }h_{\alpha_a}^{-1}(\xi)\leqslant\xi\\
    &\leqslant-\tfrac{1}{2}\log_2 \varepsilon(0)+
        \tfrac{\xi}{\ln 2}\sup_{t\in[0,\xi]}\frac{-\varepsilon'(t)}{\varepsilon(t)(1+a_{\lfloor t\rfloor})}\\
    &\leqslant-\tfrac{1}{2}\log_2 \varepsilon(0)+
        \tfrac{\xi}{\ln 2}\sup_{t\in[0,\xi]}\frac{-\varepsilon'(t)}{\varepsilon(t)
            \frac{-\min_{u\in[0,\lfloor t\rfloor+1]}|\varepsilon'(u)|}{\ln(2)\varepsilon(\lfloor t\rfloor)}}\\
    &=-\tfrac{1}{2}\log_2 \varepsilon(0)+
        \xi\sup_{t\in[0,\xi]}\frac{\varepsilon(\lfloor t\rfloor)}{\varepsilon(t)}
            \frac{\varepsilon'(t)}{\min_{u\in[0,\lfloor t\rfloor+1]}\varepsilon'(u)}\\
    &\leqslant-\tfrac{1}{2}\log_2 \varepsilon(0)+
        \xi&&\hspace{-3cm}\text{since $\varepsilon$ is decreasing and $\varepsilon'$ is negative}
\end{align*}
and thus
\[-\log_2E(\xi)\leqslant c'+\xi\]
for some constant $c'$.
Therefore we can apply Lemma~\ref{lem:univ_class_1} to $(F,E)$ and get $\beta_{E,F}\in\R^p$ such that
\[|\mathfrak{u}(\beta_{E,F};\xi)-F(\xi)|\leqslant E(\xi)\qquad\text{for all }\xi\in\Rp.\]
For any $\alpha,\beta,t$, let
\[\bar{\mathfrak{u}}(\alpha,\beta;t)=\mathfrak{u}(\beta;g(\alpha,t)).\]
Clearly $\bar{\mathfrak{u}}$ is uniformly-generable because $\mathfrak{u}$ and $g$ are uniformly-generable.
Then for any $t\in\Rp$, recall that $g(\alpha_a;t)=h_{\alpha_a}(t)$ and thus
\[|\bar{\mathfrak{u}}(\alpha_a,\beta_{E,F};t)-f(t)|
    =|\mathfrak{u}(\beta;h_{\alpha_a}(t))-F(h_{\alpha_a}(t))|
    \leqslant E(h_{\alpha_a}(t))
    =\varepsilon(t).
\]
To show the effectiveness of the property, it suffices to show that $(f,f',\varepsilon)\mapsto(a,E,F)$
is computable. Indeed, $\alpha_a$ and $\beta_{E,F}$ are computable from $a,E,F$ by
Lemma~\ref{lem:univ_class_1} and Theorem~\ref{th:fastgen}. Given $a$, the maps $E$ and $F$
are computable from $f$ and $\varepsilon$ because $h_{\alpha_a}$ is computable and increasing, thus
its inverse is computable. Finally, to show computability of $a$, notice that to define a
suitable value for each $a_n$, it is enough to compute an upper bound on the maximum of continuous functions -- defined from
$f,f',\varepsilon,\varepsilon'$ -- over compact intervals, which is a computable operation.
\end{proof}

\begin{lem}\label{lem:univ_class_3}
The universality property holds for all $(f,\varepsilon)$ on $\Rp$ such that
$f$ and $\varepsilon$ are differentiable and $\varepsilon$ is decreasing.
Furthermore, the universality property is effective for this class if we are given
a representation of $f'$ and $\varepsilon'$ as well.
\end{lem}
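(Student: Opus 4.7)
The plan is to reduce Lemma~\ref{lem:univ_class_3} to Lemma~\ref{lem:univ_class_2} by composing $f$ with a smooth bounded bijection $\sigma\colon\R\to(0,1)$, for instance $\sigma(x)=\tfrac{1}{2}(1+\tanh(x))$. The inverse $\sigma^{-1}(y)=\tfrac{1}{2}\ln\tfrac{y}{1-y}$ is generable on $(0,1)$, so the composed function $F=\sigma\circ f\colon\Rp\to(0,1)$ is differentiable and bounded, and falls into the scope of Lemma~\ref{lem:univ_class_2}. Given a suitable tolerance $E$, applying that lemma yields a generable $\mathfrak{u}$ and parameters $\beta$ with $|\mathfrak{u}(\beta,t)-F(t)|\leqslant E(t)$; then I would set $\bar{\mathfrak{u}}(\beta,t)=\sigma^{-1}(\mathfrak{u}(\beta,t))$, which is generable by closure under composition (Lemma~\ref{lemma:closure}), provided $E$ is small enough to keep $\mathfrak{u}(\beta,\cdot)$ strictly inside $(0,1)$ on the relevant open connected region.

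The technical crux is to choose $E$ meeting three requirements at once: (i) $E$ is positive, differentiable, and decreasing (to match the hypotheses of Lemma~\ref{lem:univ_class_2}); (ii) $[F(t)-E(t),F(t)+E(t)]\subset(0,1)$; and (iii) $|\sigma^{-1}(y)-f(t)|\leqslant\varepsilon(t)$ whenever $|y-F(t)|\leqslant E(t)$. The mean value theorem combined with $(\sigma^{-1})'(y)=\tfrac{1}{2y(1-y)}$ and the identity $F(t)(1-F(t))=\tfrac{1}{4\cosh^{2}f(t)}\geqslant \tfrac{1}{4}e^{-2|f(t)|}$ shows that any $E(t)\leqslant c\,\varepsilon(t)\,e^{-c'|f(t)|}$ suffices, for suitable absolute constants $c,c'$. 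To manufacture a smooth decreasing majorant, I would build a $C^{1}$ strictly increasing function $\phi\colon\Rp\to\Rps$ satisfying $\phi(t)\geqslant c''(1+|f(t)|+|\ln\varepsilon(t)|)$, for example
\[\phi(t)=\int_{0}^{t}\bigl(1+\sup_{s\in[0,u]}(|f(s)|+|\ln\varepsilon(s)|)\bigr)\,du,\]
which is $C^{1}$ because the integrand is continuous. Then $E(t)=e^{-\phi(t)}$ is positive, $C^{1}$, strictly decreasing, and satisfies (ii)--(iii) by design.

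For effectiveness, the functions $F$, $F'=\sigma'(f)\cdot f'$, $E$, and $E'$ are all computable from $(f,f',\varepsilon,\varepsilon')$ using only computable operations on continuous functions: composition and product with elementary generable functions, integration, and taking running suprema of continuous functions on compact intervals. Feeding $(F,F',E,E')$ into the effective version of Lemma~\ref{lem:univ_class_2} produces $\beta$ computably, while $\bar{\mathfrak{u}}$ is a fixed generable gadget, so the overall map $(f,f',\varepsilon,\varepsilon')\mapsto\beta$ is computable as required.

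The main obstacle is reconciling the three conflicting requirements on $E$: it must decay fast enough pointwise (to counteract the blow-up of $(\sigma^{-1})'$ near the endpoints of $(0,1)$, which is driven by $|f|$), yet remain monotone (whereas the naive pointwise bound $\varepsilon(t)F(t)(1-F(t))$ oscillates because $f$ does), and stay $C^{1}$ so that Lemma~\ref{lem:univ_class_2} still applies. The integral-of-running-supremum construction for $\phi$ is precisely what reconciles these constraints, and is essentially the only new ingredient compared to the proof of Lemma~\ref{lem:univ_class_2}; a secondary subtlety is ensuring domain connectedness when invoking closure under composition, which is handled by the smallness of $E$.
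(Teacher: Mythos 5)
Your high-level strategy --- reduce to Lemma~\ref{lem:univ_class_2} by first compressing $f$ into a bounded range and then inverting the compression --- is the same as the paper's, but you instantiate the compression very differently, and this is where a real gap appears. The paper uses the \emph{linear} time-varying rescaling
\[
F(t)=\tfrac{1}{2}+\tfrac{f(t)}{1+\fastgen(\alpha_a,t)},\qquad
E(t)=\tfrac{\varepsilon(t)}{1+\fastgen(\alpha_a,t)},
\]
recovering $f$ via $\bar{\mathfrak{u}}(\alpha,\beta,t)=(1+\fastgen(\alpha,t))(\mathfrak{u}(\beta,t)-\tfrac12)$. Because the inverse map is just multiplication by $1+\fastgen(\alpha_a,t)$, errors scale \emph{linearly} and exactly cancel in the estimate $|\bar{\mathfrak{u}}-f|\leqslant(1+\fastgen)E=\varepsilon$; moreover $E$ is automatically decreasing and differentiable, so no delicate construction of $E$ is required. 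Your choice $\sigma(x)=\tfrac12(1+\tanh x)$ has a \emph{nonlinear} inverse whose derivative blows up like $e^{2|f(t)|}$ near the image of $f$, which forces you to build an exponentially small yet monotone and $C^1$ error function $E$. This is where the gap lies.

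Concretely, you claim $\phi(t)=\int_0^t\bigl(1+\sup_{s\in[0,u]}(|f(s)|+|\ln\varepsilon(s)|)\bigr)\,du$ satisfies $\phi(t)\geqslant c''(1+|f(t)|+|\ln\varepsilon(t)|)$, but integrating a running supremum up to $t$ does not in general dominate its value \emph{at} $t$. Take $\varepsilon\equiv1$ and $f(t)=2^{2^t}$: then
\[
\int_0^t 2^{2^u}\,du=\int_1^{2^t}\frac{2^v}{v\ln 2}\,dv=O\!\left(\frac{2^{2^t}}{2^t}\right)=o\bigl(2^{2^t}\bigr),
\]
so $\phi(t)/|f(t)|\to 0$ and condition (iii) fails for large $t$. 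Your $\phi$ would need to look ahead, e.g.\ integrate the running supremum up to $t+1$ (together with a constant offset to cover $t\in[0,1]$); as stated the inequality is simply false. A secondary concern you flag but do not resolve is that $\bar{\mathfrak{u}}=\sigma^{-1}\circ\mathfrak{u}$ is only generable on $\{(\beta,t):\mathfrak{u}(\beta,t)\in(0,1)\}$, and showing this set is open \emph{and connected} is not obviously ``handled by the smallness of $E$''; the paper's linear recovery map is total and sidesteps this entirely.
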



\begin{proof}

Apply Lemma~\ref{lem:univ_class_2} to get $p\in\N$ and $\mathfrak{u}$ uniformly-generable.
Let $a\in\N^\N$ be an increasing sequence,
and apply Theorem~\ref{th:fastgen} to get $\alpha_a$. Recall that $\fastgen(\alpha_a;\cdot)$
is positive and increasing. Let $f,\varepsilon$ be as described in the statement.
For any $t\in\Rp$, let
\[F(t)=\tfrac{1}{2}+\tfrac{f(t)}{1+\fastgen(\alpha_a;t)},
\qquad E(t)=\tfrac{\varepsilon(t)}{1+\fastgen(\alpha_a;t)}.\]
Then for any $n\in\N$ and $t\in[n,n+1]$, we have that
\[|F(t)-\tfrac{1}{2}|
    =\tfrac{f(t)}{1+\fastgen(\alpha_a;t)}
    \leqslant \tfrac{|f(t)|}{1+a_n}.
\]
Thus we can choose $a_n=2\max_{u\in[t,t+1]}|f(u)|$ and get that $|F(t)-\tfrac{1}{2}|\leqslant\tfrac{1}{2}$
for all $t\in\Rp$, and thus $F(t)\in[0,1]$. Furthermore, $F$ is differentiable
and $E$ is decreasing because $\varepsilon$ is decreasing and $\fastgentext$ increasing.
Apply Lemma~\ref{lem:univ_class_2}
to $(F,\varepsilon)$ to get $\beta_F\in\R^p$ such that
\[|\mathfrak{u}(\beta_F;t)-F(t)|\leqslant E(t)\qquad\text{for all }t\in\Rp.\]
For any $\alpha,\beta,t$, let
\[\bar{\mathfrak{u}}(\alpha,\beta;t)=(1+\fastgen(\alpha;t))(\mathfrak{u}(\beta;t)-\tfrac{1}{2}).\]
Clearly $\bar{\mathfrak{u}}$ is uniformly-generable because $\mathfrak{u}$ and $\fastgentext$ are uniformly-generable.
Then for any $t\in\Rp$,
\begin{align*}
|\bar{\mathfrak{u}}(\alpha_a,\beta_F;t)-f(t)|
    &=\left|(1+\fastgen(\alpha_a;t))\left(\mathfrak{u}(\beta;h_{\alpha_a}(t))-\tfrac{1}{2}-\tfrac{f(t)}{1+\fastgen(\alpha_a;t)}\right)\right|\\
    &=\left(1+\fastgen(\alpha_a;t))|\left(\mathfrak{u}(\beta;h_{\alpha_a}(t))-F(t)\right)\right|\\
    &\leqslant (1+\fastgen(\alpha_a;t))E(t)\\
    &\leqslant \varepsilon(t).
\end{align*}
The effectiveness of $\alpha_a$ and $\beta_F$ comes from previous lemmas and boils down again
to compute an upper bound on the maximum of a continuous function.
\end{proof}

\begin{lem}\label{lem:univ_class_4}
The universality property holds for all continuous $(f,\varepsilon)$ on $\Rp$.
Furthermore, the universality property is effective for this class.
\end{lem}

\begin{proof}

Apply Lemma~\ref{lem:univ_class_3} to get $p\in\N$ and $\mathfrak{u}$ uniformly-generable.
Let $f\in C^0(\Rp,\R)$ and $\varepsilon\in C^0(\Rp,\Rps)$. Then there exists
$\tilde{f}\in C^1(\Rp,\R)$ and a decreasing $\tilde{\varepsilon}\in C^1(\Rp,\Rps)$
such that
\begin{equation}\label{eq:univ_class_3:def_f_eps_c1}
|\tilde{f}(t)-f(t)|\leqslant\tilde{\varepsilon}(t)\leqslant\tfrac{\varepsilon(t)}{2}.
\end{equation}
We can then apply Lemma~\ref{lem:univ_class_3} to $(\tilde{f},\tilde{\varepsilon})$
to get $\alpha_{\tilde{f}}\in\R^p$ such that
\[|\mathfrak{u}(\alpha_{\tilde{f}};t)-\tilde{f}(t)|\leqslant \tilde{\varepsilon}(t)\qquad\text{for all }t\in\Rp.\]
But then for any $t\in\Rp$,
\begin{align*}
|\mathfrak{u}(\alpha_{\tilde{f}};t)-f(t)|
    &\leqslant|\mathfrak{u}(\alpha_{\tilde{f}};t)-\tilde{f}(t)|+|\tilde{f}(t)-f(t)|\\
    &\leqslant\tilde{\varepsilon}(t)+\tilde{\varepsilon}(t)\\
    &\leqslant\varepsilon(t).
\end{align*}
To show the computability of $\alpha_{\tilde{f}}$, it suffices to show computability of $\tilde{f}$
and $\varepsilon$, and their derivatives, from $f$ and $\varepsilon$, and apply Lemma~\ref{lem:univ_class_3}.
Is it not hard to find $C^1$ functions satisfying \eqref{eq:univ_class_3:def_f_eps_c1}. For example,
one can proceed over all intervals $[n,n+1]$ and then use $C^1$ pasting. Over a compact interval,
one can use an effective variant of Stone-Weierstrass theorem.
\end{proof}

\begin{lem}\label{eq:univ_class_4_extend}
There exists a uniformly-generable function $\mathfrak{u}$
such that for all $f\in C^0(\Rp,\R)$, $\varepsilon\in C^0(\R,\Rps)$ and $\delta>0$,
there exists $\alpha$ such that
\begin{itemize}
\item $|\mathfrak{u}(\alpha;t)-f(t)|\leqslant \varepsilon(t)$ for all $t\geqslant 0$,
\item $|\mathfrak{u}(\alpha;t)|\leqslant |f(-t)|+\varepsilon(t)$ for all $t\in[-\delta,0]$,
\item $|\mathfrak{u}(\alpha;t)|\leqslant\varepsilon(t)$ for all $t\leqslant-\delta$.
\end{itemize}
Furthermore, the map $(f,\varepsilon,\delta)\mapsto\alpha$ is $([[\rho\to\rho]^2,\rho],\rho)-$computable.
\end{lem}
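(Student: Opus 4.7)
The plan is to extend the approximation provided by Lemma~\ref{lem:univ_class_4} (which holds only on $\Rp$) to all of $\R$ by wrapping the generable function $\mathfrak{u}_0$ it provides with a generable time-change $\sigma:\R\to\Rps$ and a multiplicative cutoff $\chi:\R\to[0,1]$. Concretely, I would apply Lemma~\ref{lem:univ_class_4} to a shifted, smoothed target pair $(\hat f,\hat\varepsilon)$ on $\Rp$, obtaining $\mathfrak{u}_0$ and a parameter $\alpha_0$, and then set $\mathfrak{u}(\alpha_0,\beta,t)=\chi(\beta,t)\,\mathfrak{u}_0(\alpha_0,\sigma(t))$ for $t\in\R$.

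For the construction of $(\hat f,\hat\varepsilon)$: first choose $\eta\in(0,\delta)$ small enough, using continuity of $f$ at $0$ together with the positive minimum of $\varepsilon$ on $[-\delta,0]$, that $\bigl|\,|f(|t|)|-|f(0)|\,\bigr|\leq\varepsilon(t)/2$ on $[-\eta,0]$. Define $\hat f:\Rp\to\R$ continuous by $\hat f(s)=0$ on $[0,\delta-\eta]$, linear from $0$ to $f(0)$ on $[\delta-\eta,\delta]$, and $\hat f(s)=f(s-\delta)$ for $s\geq\delta$; and pick $\hat\varepsilon:\Rp\to\Rps$ continuous positive with $\hat\varepsilon(s)\leq\varepsilon(s-\delta)/4$ for $s\geq\delta$. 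For the time-change, take $\sigma(t)=\delta+M^{-1}\ln(1+e^{Mt})$ with $M$ large, so that $\sigma:\R\to(\delta,\infty)\subseteq\Rps$ is generable and $|\sigma(t)-(t+\delta)|=M^{-1}\ln(1+e^{-Mt})\leq M^{-1}\ln 2$ is exponentially small for $t\geq 0$, while $\sigma(t)\to\delta$ exponentially fast as $t\to-\infty$. For the cutoff, build $\chi(\beta,t)$ from $\fastgen$ so that $\chi(\beta,t)\approx 1$ on $[{-\delta},\infty)$ and its decay for $t\to-\infty$ is governed by an integer sequence $(a_n)$ encoded in $\beta$ via Theorem~\ref{th:fastgen}, e.g.\ $\chi(\beta,t)=\exp\bigl(\fastgen(\beta,0)-\fastgen(\beta,\phi(t))\bigr)$ with $\phi:\R\to\Rp$ a generable smoothing of $\max(-t-\delta,0)$.

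Verification then splits by region. For $t\geq 0$, $\chi\approx 1$ and $\sigma(t)\approx t+\delta$ give $|\mathfrak{u}-f(t)|\leq\varepsilon(t)$ once $M$ is large enough relative to the (computable) modulus of continuity of $f$ on compact intervals. For $t\in[-\delta,-\eta]$, one has $\sigma(t)\in[\delta,\delta+O(1/M)]$ and $\hat f(\sigma(t))=0$, so $|\mathfrak{u}|\leq\hat\varepsilon(\sigma(t))\leq\varepsilon(t)/4$. For $t\in[-\eta,0]$, the bound $|\mathfrak{u}_0(\alpha_0,\sigma(t))|\leq|f(0)|+\hat\varepsilon(\sigma(t))$ together with the choice of $\eta$ gives $|\mathfrak{u}|\leq|f(-t)|+\varepsilon(t)$. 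For $t\leq-\delta$, choosing $(a_n)$ large enough that $\chi(\beta,t)\cdot\bigl(|f(0)|+\max_{s\in[\delta,\delta+1]}\hat\varepsilon(s)\bigr)\leq\varepsilon(t)$ makes the product small. The main obstacle is producing a \emph{generable} cutoff $\chi$ whose decay is fast enough to dominate the possibly very rapid decay of $\varepsilon(t)$ as $t\to-\infty$; Theorem~\ref{th:fastgen} provides exactly this flexibility, since $\beta$ can encode any required growth sequence. Finally, every ingredient ($\eta$, $\hat f$, $\hat\varepsilon$, $M$, $a_n$) is computable from $(f,\varepsilon,\delta)$, so the map $(f,\varepsilon,\delta)\mapsto(\alpha_0,\beta)$ is computable by combining Lemma~\ref{lem:univ_class_4} with Theorem~\ref{th:fastgen}.
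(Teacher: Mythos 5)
Your overall architecture---apply Lemma~\ref{lem:univ_class_4} to a modified target on $\Rp$, compose with a generable time-change, and multiply by a generable cutoff whose decay is steered by $\fastgen$ via Theorem~\ref{th:fastgen}---matches the paper's in spirit, and the cutoff half of the argument is sound in outline. But the time-change half has a genuine gap. You take $\sigma(t)=\delta+M^{-1}\ln(1+e^{Mt})$, note $\sigma(t)-(t+\delta)=M^{-1}\ln(1+e^{-Mt})>0$, and then claim $|\hat f(\sigma(t))-f(t)|\leqslant\varepsilon(t)$ for all $t\geqslant 0$ ``once $M$ is large enough relative to the modulus of continuity of $f$ on compact intervals.'' No single constant $M$ (even one encoded into $\alpha$) can achieve this: the shift $\sigma(t)-\delta-t$ is strictly positive for every $t$, the local modulus of continuity of $f$ on $[n,n+1]$ can degrade without bound as $n\to\infty$, and $\varepsilon(t)$ can shrink arbitrarily fast. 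Concretely, for each $M\in\N$ place a unit-height bump of width equal to the shift $\sigma(M)-\delta-M$ just to the right of $t=M$ (these supports are disjoint, so $f$ is continuous) and take $\varepsilon\equiv\tfrac12$; then for every choice of $M$ there is a $t$ with $|f(\sigma(t)-\delta)-f(t)|=1>\varepsilon(t)$. So the verification on $[0,\infty)$ does not close.

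The paper avoids this entirely by making the reparametrization \emph{exact}: it applies Lemma~\ref{lem:univ_class_4} to the pair $\bigl(\xi\mapsto f(\sqrt{\xi}),\ \xi\mapsto\tfrac12\min(\varepsilon(\sqrt{\xi}),\varepsilon(-\sqrt{\xi}))\bigr)$ on $\Rp$ and sets $\mathfrak{U}(\alpha,\beta,t)=s(t)\,\mathfrak{u}(\alpha,t^2)$. Since $t\mapsto t^2$ is generable and $f(\sqrt{t^2})=f(|t|)$ holds exactly for every real $t$, the inner factor approximates $f(|t|)$ with error at most $\tfrac12\varepsilon(t)$ and there is no time-change error to be amplified by the regularity of $f$. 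The cutoff is then the sigmoid $s(t)=\tfrac12+\tfrac12\tanh\bigl(\fastgen(\beta,t)\bigl(\tfrac{\delta}{2}+t\bigr)\bigr)$, whose steepness is steered by $\fastgen$; the three regimes $t\geqslant 0$, $t\in[-\delta,0]$ and $t\leqslant-\delta$ then fall out of the sign of $\tfrac{\delta}{2}+t$ together with $|\tanh(x)-\sgn(x)|\leqslant e^{-|x|}$. To repair your argument you would either have to adopt this exact squaring trick, or replace the constant $M$ by a $t$-dependent fast-growing rate supplied by $\fastgen$, which destroys the closed form of $\sigma$ and would require a fresh analysis of generability and of the resulting time-change error.
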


\begin{proof}
Apply Lemma~\ref{lem:univ_class_4} to get $\mathfrak{u}$. Note that $t\mapsto f(\sqrt{t})$,
$t\mapsto\varepsilon(\sqrt(t))$ and $\max$ are continuous, so there exists $\alpha$
such that
\[|\mathfrak{u}(\alpha;t)-f(\sqrt{t})|\leqslant\tfrac{1}{2}\min(\varepsilon(\sqrt{t}),\varepsilon(-\sqrt{t}))\qquad\text{for all }t\geqslant 0.\]
For any $\alpha,t$ define
\[\mathfrak{U}(\alpha,\beta,\delta;t)=s(t)\mathfrak{u}(\alpha;t^2),
\qquad s(t)=\tfrac{1}{2}+\tfrac{1}{2}\tanh(A(t)(\tfrac{\delta}{2}+t)),\qquad A(t)=\fastgen(\beta;t).\]
Let $a_n$ be a sequence such that for all $n\in\N$,
\[a_n\geqslant \tfrac{2}{\delta}\left(\sup_{t\in[n,n+1]}|f(t)|+\sup_{t\in[-n-1,n+1]}-\log\varepsilon(t)\right).\]
Then there exists $\beta_a$ such that for all $n\in\N$,
\[\fastgen(\beta;t)\geqslant a_n\qquad\text{for all }t\geqslant n.\]
Let $t\geqslant 0$, then
\begin{align*}
A(t)&=\fastgen(\beta;t)\\
    &\geqslant a_{\lfloor t\rfloor}\\
    &\geqslant \tfrac{2}{\delta}\left(\sup_{u\in[\lfloor t\rfloor,\lceil t\rceil]}|f(u)|+\sup_{u\in[-\lceil t\rceil,\lceil t\rceil]}-\log\varepsilon(u)\right)\\
    &\geqslant \tfrac{2}{\delta}\left(|f(t)|-\log\varepsilon(t)\right)\\
(\tfrac{\delta}{2}+t)A(t)
    &\geqslant|f(t)|-\log\varepsilon(t)\\
|1-\tanh((\tfrac{\delta}{2}+t)A(t))|
    &\leqslant e^{-|f(t)|+\log\varepsilon(t)}\\
|1-s(t)|
    &\leqslant \tfrac{\varepsilon(t)}{2}e^{-|f(t)|}.
\end{align*}
It follows that
\begin{align*}
|f(t)-\mathfrak{U}(\alpha,\beta,\delta;t)|
    &\leqslant |f(t)-\mathfrak{u}(\alpha;t^2)|+|\mathfrak{u}(\alpha;t^2)(1-s(t))|\\
    &\leqslant \tfrac{1}{2}\varepsilon(t)+|\mathfrak{u}(\alpha;t^2)||1-s(t)|\\
    &\leqslant \tfrac{1}{2}\varepsilon(t)+|\mathfrak{u}(\alpha;t^2)||1-s(t)|\\
    &\leqslant \tfrac{1}{2}\varepsilon(t)+(|f(t)|+\tfrac{1}{2}\varepsilon(t))\tfrac{\varepsilon(t)}{2}e^{-|f(t)|}\\
    &\leqslant\varepsilon(t)&&\text{using }e^{-x}x\leqslant 1.
\end{align*}
Let $t\in[-\delta,0]$, then $|s(t)|\leqslant 1$ thus
\[
|\mathfrak{U}(\alpha,\beta,\delta;t)|
    \leqslant|\mathfrak{u}(\alpha;t^2)|
    \leqslant|f(-t)|+\varepsilon(t).\]
Let $t\leqslant-\delta$, then with a similar argument as above
\[|s(t)|
    \leqslant \tfrac{\varepsilon(t)}{2}e^{-|f(-t)|}.\]
It follows that
\[
|\mathfrak{U}(\alpha,\beta,\delta;t)|
    \leqslant|\mathfrak{u}(\alpha;t^2)||s(t)|
    \leqslant(|f(-t)|+\varepsilon(t))\tfrac{\varepsilon(t)}{2}e^{-|f(-t)|}
    \leqslant\varepsilon(t).\]
The effectiveness of $\alpha$ comes from Lemma~\ref{lem:univ_class_4} and the fact the map
$(f,t)\mapsto f(\sqrt{t})$ is $([[\rho\to\rho],\rho],\rho)-$computable since $\sqrt{.}$ is
$(\rho,\rho)-$computable. To show the computability of $\beta$, it suffices to show the computability
of $a_n$, which boils down to computing an upper bound for $f$ and $\varepsilon$ over compact intervals.
The effectiveness for $\delta$ is trivial since it is the identity
mapping ($\delta$ is given unmodified to $\mathfrak{U}$).
\end{proof}

\begin{lem}\label{lem:univ_class_5}
The universality property holds for all continuous $(f,\varepsilon)$ on $\R$.
Furthermore, the universality property is effective for this class.
\end{lem}


\begin{proof}
Let $c=\tfrac{1}{4}$.
Apply Lemma~\ref{eq:univ_class_4_extend} to get $\mathfrak{u}$. Then there exists $\alpha$ such
that
\begin{itemize}
\item $|\mathfrak{u}(\alpha;t)-f(t)|\leqslant c\varepsilon(t)$ for all $t\geqslant 0$,
\item $|\mathfrak{u}(\alpha;t)|\leqslant |f(-t)|+c\varepsilon(t)$ for all $t\in[-1,0]$,
\item $|\mathfrak{u}(\alpha;t)|\leqslant c\varepsilon(t)$ for all $t\leqslant-1$.
\end{itemize}
For all $t\in\R$, let
\[g(t)=f(-t)-\mathfrak{u}(\alpha;-t).\]
Since this is a continuous function, we can apply the lemma again to get $\alpha'$
such that
\begin{itemize}
\item $|\mathfrak{u}(\alpha';t)-g(t)|\leqslant c\varepsilon(t)$ for all $t\geqslant 0$,
\item $|\mathfrak{u}(\alpha';t)|\leqslant |g(-t)|+c\varepsilon(t)$ for all $t\in[-1,0]$,
\item $|\mathfrak{u}(\alpha';t)|\leqslant c\varepsilon(t)$ for all $t\leqslant-1$.
\end{itemize}
For all $\alpha,\alpha',t$ let
\[\mathfrak{U}(\alpha,\alpha';t)=\mathfrak{u}(\alpha;u)+\mathfrak{u}(\alpha';-t).\]
We claim that $\mathfrak{U}$ satisfies the theorem:
\begin{itemize}
\item If $t\geqslant 1$ then $-t\leqslant-1$ thus
    \[|\mathfrak{U}(\alpha,\alpha';t)-f(t)|
        \leqslant|\mathfrak{u}(\alpha;t)-f(t)|+|\mathfrak{u}(\alpha';-t)|
        \leqslant c\varepsilon(t)+c\varepsilon(t)
        \leqslant\varepsilon(t).\]
\item If $0\leqslant t\leqslant 1$ then $-1\leqslant -t\leqslant 0$ thus
    \begin{align*}
    |\mathfrak{U}(\alpha,\alpha';t)-f(t)|
        &\leqslant|\mathfrak{u}(\alpha;t)-f(t)|+|\mathfrak{u}(\alpha';-t)|\\
        &\leqslant c\varepsilon(t)+|g(-t)|+c\varepsilon(t)\\
        &\leqslant c\varepsilon(t)+|g(-t)|+c\varepsilon(t)\\
        &\leqslant c\varepsilon(t)+|f(t)-\mathfrak{u}(\alpha;t)|+c\varepsilon(t)\\
        &\leqslant 3c\varepsilon(t)\leqslant\varepsilon(t).
    \end{align*}
\item If $t\leqslant 0$ then $-t\geqslant 0$ and thus
    \begin{align*}
    |\mathfrak{U}(\alpha,\alpha';t)-f(t)|
        &=|\mathfrak{u}(\alpha';t)+\mathfrak{u}(\alpha;t)-f(t)|\\
        &=|\mathfrak{u}(\alpha';-t)-g(-t)|\\
        &\leqslant c\varepsilon(t).
    \end{align*}
\end{itemize}
The computability of $\alpha$ follows directly from the previous lemma.
\end{proof}

We can now show the main theorem.


\begin{proof}[Proof of Theorem~\ref{th:universal_pivp}]
This is mostly rewriting but we do it full for completeness.

Apply Lemma~\ref{lem:univ_class_5} to get a uniformly-generable function $\mathfrak{u}:\Gamma\times\R\to\R$.
By definition of $\mathfrak{u}$, there exists an integer $d$, a polynomial matrix $q$
with coefficients in $\K$, $t_0\in\K$ and a computable function $y_0:\Gamma\to\R^d$
such that for all $\alpha\in\Gamma$ there exists
$y_\alpha:\R\rightarrow\R^d$ such that
\begin{itemize}
\item $y_\alpha(t_0)=y_0(\alpha)$ and $y_\alpha'(t)=q(y_\alpha(t))$ for all $t\in\R$,
\item $(y_\alpha(t))_1=\mathfrak{u}(\alpha;t)$ for all $t\in\R$.
\end{itemize}
Note that $q$ is a polynomial that does not depend on $\alpha$ but potentially has coefficients
in $\K$ that are not rational. We can eliminate those as explained in Remark~\ref{rem:unif_gen_coeff}.
Now we get that for all continuous functions $f,\varepsilon$, there exists $\alpha\in\Gamma$
such that $|f(t)-\mathfrak{u}(\alpha,t)|\leqslant\varepsilon(t)$. Therefore if we consider
the unique solution to $z(0)=y(\alpha;0)$ and $z'=q(z)$ then $z(t)=y_\alpha(t)$ and
we have the result. Note that we have not used the initial condition $z(t_0)=y_0(\alpha)$ because we
want $t_0=0$ in the statement of the theorem. Furthermore, the initial condition $y(\alpha;0)$ is computable from
$f$ and $\varepsilon$ because $y(\alpha;0)$ is computable from $t_0$ and $y_0(\alpha$) by
Proposition~\ref{prop:gen_implies_computable}, $t_0\in\K$ is computable,
$y_0$ is computable and $\alpha$ is computable from $f,\varepsilon$
by Lemma~\ref{lem:univ_class_5}.
\end{proof}

\bibliographystyle{alpha}
\bibliography{extracted}

\newpage
\appendix










\end{document}